\def\equationautorefname~#1\null{(#1)\null}
\newtheorem{theorem}{Theorem}[section]
\newtheorem{headtheorem}{Theorem}
\newaliascnt{cor}{theorem}
\newtheorem{cor}[cor]{Corollary}
\newaliascnt{lem}{theorem}
\newtheorem{lem}[lem]{Lemma}
\newaliascnt{conjecture}{theorem}
\newaliascnt{prop}{theorem}
\newtheorem{prop}[prop]{Proposition}
\newaliascnt{dfn}{theorem}
\newtheorem{dfn}[dfn]{Definition}
\newaliascnt{nota}{theorem}
\newtheorem{nota}[nota]{Notation}
\newaliascnt{exmp}{theorem}
\newtheorem{exmp}[exmp]{Example}
\newaliascnt{rem}{theorem}
\newtheorem{rem}[rem]{Remark}
\newaliascnt{problem}{theorem}
\newaliascnt{construction}{theorem}
\newaliascnt{defprop}{theorem}
\newaliascnt{obs}{theorem}
\newtheorem{obs}[obs]{Observation}
\DeclareMathOperator{\regularity}{reg}
\newcommand{\edge}[1]{I(#1)}
\newcommand{\cncm}{C_n\cdot P_2\cdot C_m}
\newcommand{\cnplcm}{C_n\cdot P_l\cdot C_m }
\newcommand{\cnlcm}[1]{C_n\cdot P_#1\cdot C_m }
\newcommand{\cn}{C_n}
\newcommand{\cm}{C_m}
\newcommand{\Mod}{\normalfont\text{mod}\;}
\newcommand{\reg}[1]{\regularity{#1}}
\newcommand{\matching}{\mathcal{M}}
\newcommand{\LozGraph}{\mathcal{L}_x(G)}
\begin{document}

\title{Regularity of bicyclic graphs and their powers}

\author{Yairon Cid-Ruiz}
\address{Department de Matem\`{a}tiques i Inform\`{a}tica, Facultat de Matem\`{a}tiques i Inform\`{a}tica, Universitat de Barcelona, Gran Via de les Corts Catalanes, 585; 08007 Barcelona, Spain.}
\email{ycid@ub.edu}
\thanks{The first named author was funded by the European Union's Horizon 2020 research and innovation programme under the Marie Sk\l{}odowska-Curie grant agreement No. 675789.
}

\author{Sepehr Jafari}
\address {Dipartimento di Matematica, Università degli studi di Genova, Via Dodecaneso, 35, 16146 Genova GE, Italy}
\email{sepehr@dima.unige.it}

\author{Navid Nemati}
\address{Institut de Math\'{e}matiques de Jussieu, UPMC, 75005 Paris, France}
\email{navid.nemati@imj-prg.fr}

\author{Beatrice Picone}
\address{Dipartimento di Matematica e Informatica\\
		Viale A. Doria, 6 - 95100 - Catania, Italy}
\email{picone@dmi.unict.it}

\subjclass[2010]{13D02, 05C25, 05C38, 05E40.}

\keywords{bicyclic graphs, edge ideals, regularity, induced matching number, Lozin transformation, even-connection.}

\begin{abstract}
Let $I(G)$ be the edge ideal of a bicyclic graph. 
In this paper, we characterize the Castelnuovo-Mumford regularity of $I(G)$ in terms of the induced matching number  of $G$.
For the base case of this family of graphs, i.e. dumbbell graphs, we explicitly compute the induced matching number. Moreover, we prove that  $\reg{I(G)^q}=2q+\reg{I(G)}-2$, for all $ q\geq 1 $, when $ G $ is a dumbbell graph with a connecting path having no more than two vertices. 
\end{abstract}

\maketitle
\vspace*{-1.001cm}
\section*{Introduction}

Let $ I $ be a homogeneous  ideal of the polynomial ring $ R=K[x_1,\ldots,x_r] $. The Castelnuovo-Mumford regularity of $I$, denoted by $ \reg{(I)} $, has been an interesting
and active research topic for the past decades.
There exists a vast literature on the study of the $ \reg{(I)} $. 
One of the most important results on the behavior of the regularity of powers of ideals was given  independently by Cutkosky, Herzog, and Trung in \cite{Assomptatic_reg_Herzog}, and  by Kodiyalam in  \cite{Kodiyalam}. 
In both papers, it is proved that for all $ q\geq q_0 $, the regularity of powers of $ I $ is asymptotically a linear function $\reg{(I^q)}=dq+b$, where $q_0$ is the so-called stabilizing index, and $b$ is the so-called constant. 
The value of $d$ in the above formula is well understood. 
For example, $d$ is equal to the degree of the generators of $I$ when $I$ is equigenerated. 
However, their method does not give precise information on $ q_0 $  and $ b $.
 
Since then, many researchers have tried to compute $ q_0 $ and $ b $ for special families of ideals. 
The most simple case, yet interesting, is when $ I $ is the edge ideal of a finite simple graph. 
Let $G=(V(G), E(G))$ denote a finite simple undirected graph. 
Let $R$ be the polynomial ring $K[x_i \mid x_i \in V(G)]$ where  $K$ is any field. 
The edge ideal $I(G)$ of $G$ is the ideal
$$
I(G)=(x_ix_j \mid \{x_i,x_j\} \in E(G) ).
$$
Several authors have settled the problem of determining the stabilizing index and the constant for special families of graphs. 
Banerjee proved that $ \reg{I(G)^q}=2q $, for all $ q\geq2 $,   when $ G $ is a gap-free and cricket-free graph (see  \cite{banerjee}). 
Moghimian, Fakhari, and Yassemi answered the question for the family of whiskered graphs (see \cite{Yassemi}). 
Beyarslan, H\`a, and Trung settled the problem for the family of forests and cycles (see \cite{TAI_FOREST_CYCLE}). 
Their results were expanded to the family of unicyclic graphs by Alilooee,  Beyarslan, and Selvaraja (see \cite{REG_UNICYCLIC_GRAPH}).
Moreover, Alilooee and Banerjee determined the stabilizing index and the constant for the family of bipartite graphs with regularity equal to three (see \cite{Alilooee_Banerjee_Arindam_three_bipartite_graphs}).
Jayanthan and Selvaraja settled the problem for the family of very well-covered graphs (see \cite{Jayanthan_Selvaraja_very_well_covered_graphs}). 
Recently, Erey proved that if $ G $ is a gap-free and diamond-free graph, then $ \reg{I(G)^q}=2q $ for all $ q\geq 2 $ (see \cite{Erey_Powers_Edge_Ideals_Linear_Resolutions}).
The approach is focused on the relations between the combinatorics of graphs and algebraic properties of edge ideals. We refer the reader to see \cite{Katzman_Mordechai}, \cite{Tai_Adam_reg_of_hypergraphs}, \cite{Biyikoglu_regularity_of_graphs},   \cite{Hibi_Higashitani_Kimura_Tsuchiya_induced_matchings_regularity}, \cite{Fakhari_Yassemi}, \cite{Woodroofe} and \cite{Norouzi_Fakhari_Yassemi_Reg_Powers_very_well_covered_graphs_Norouzi_Fakhari_Yassemi} for more information on this topic. 
The purpose of this paper is to extend the results of \cite{REG_UNICYCLIC_GRAPH} to the 
family of bicyclic graphs (i.e. a graph with exactly two cycles).

The base case of the family of bicyclic graphs is that of dumbbell graphs. 
A dumbbell graph $ \cnlcm{l} $ is a graph consisting of two cycles $C_n$ and $C_m$ connected with a path  $P_l$, where $ n $, $ m $, and $ l $ are the number of  vertices (see \autoref{example_first_dumbbells}).
For convenience of notation, we define the following function
$$
\xi_3(n) = \begin{cases}
1 \qquad \text{ if } n \equiv 0, 1 \;(\Mod 3),\\
0 \qquad \text{ if } n \equiv 2 \;(\Mod 3).
\end{cases}
$$ 

Here, we describe the basic outline and main results of this paper.

In \hyperref[Preliminaries_section]{Section 1}, we fix some notations and recall known results which are crucial to our approach.

In \hyperref[Regularity_of_the_dumbbell_section]{Section 2}, we use combinatorial techniques to  compute the induced matching number of a dumbbell graph. 
Then,  applying inductive methods, we study the regularity of the edge ideals of dumbbell graphs.
 For a dumbbell graph $ \cnlcm{l} $, we will always assume that $n \;\Mod3 \le m  \;\Mod3 $. The cases $n \equiv 2 \;(\Mod 3)$, $m \equiv 0,1 \; (\Mod 3)$  will have the same results as the cases $n \equiv 0,1 \;(\Mod 3)$, $m \equiv 2 \; (\Mod 3)$. 
Our approach is based on the Lozin transformation (see \cite{LOZIN_TRANSFORMATION} and \cite{LOZIN_TRANS}), and the induced matching number of a dumbbell graph. 
The following results are given in this section:  
\begin{headtheorem}[\autoref{formula_nu_dumbbell}]
	Let $n,m \ge 3$ and $l \ge 1$, then
	$$
	\nu(\cnplcm) = \Big\lfloor \frac{n}{3} \Big\rfloor + \Big\lfloor \frac{m}{3} \Big\rfloor + \Big\lfloor \frac{l-\xi_3(n)-\xi_3(m)+1}{3} \Big\rfloor.
	$$
\end{headtheorem}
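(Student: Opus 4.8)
The proof is combinatorial and consists of the two inequalities $\nu(\cnplcm)\ge R$ and $\nu(\cnplcm)\le R$, where $R$ denotes the asserted right-hand side. I fix coordinates once and for all: the vertices of $\cn$ are $a_0,\dots,a_{n-1}$ in cyclic order, those of $\cm$ are $b_0,\dots,b_{m-1}$, and the connecting path reads $a_0=q_1,q_2,\dots,q_l=b_0$, so that $q_2,\dots,q_{l-1}$ are its interior vertices when $l\ge 3$; the two small configurations $l=1$ (the cycles share the vertex $a_0=b_0$) and $l=2$ (the cycles joined by the bridge $a_0b_0$) are treated by hand. I will use freely the classical values $\nu(C_k)=\lfloor k/3\rfloor$ and $\nu(P_k)=\lfloor (k+1)/3\rfloor$, the monotonicity of $\nu$ under passing to induced subgraphs, and the two bookkeeping identities $\nu(P_{k-1})=\lfloor k/3\rfloor$ and $\nu(P_{k-3})=\lfloor k/3\rfloor-\xi_3(k)$ --- equivalently, a maximum induced matching of $C_k$ can always be chosen disjoint from a prescribed vertex $a_0$, and it can be chosen disjoint from the whole set $\{a_{k-1},a_0,a_1\}$ if and only if $k\equiv 2\pmod 3$.

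For the lower bound I build an induced matching of size $R$. On $\cn$ I take a maximum induced matching pushed as far from $a_0$ as the residue of $n$ permits: one inside $\{a_2,\dots,a_{n-2}\}$ when $\xi_3(n)=0$ and one inside $\{a_1,\dots,a_{n-1}\}$ when $\xi_3(n)=1$, and symmetrically on $\cm$. The portion of the connecting path not blocked by these choices is exactly $q_{1+\xi_3(n)},q_{2+\xi_3(n)},\dots,q_{l-\xi_3(m)}$, an induced path on $l-\xi_3(n)-\xi_3(m)$ vertices --- it is legitimate to use the cycle vertex $a_0$ (resp.\ $b_0$) at an end where $\xi_3=0$, precisely because its cycle-neighbours were deliberately left unmatched; when $l=2$ this stretch is, or contains, the bridge edge. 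Adjoining a maximum induced matching of that sub-path contributes $\lfloor (l-\xi_3(n)-\xi_3(m)+1)/3\rfloor$ edges, and one verifies that no vertex covered in any of the three regions is adjacent to a covered vertex of another region, so the union is an induced matching of cardinality $R$.

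For the upper bound let $\matching$ be a maximum induced matching and write $\matching=\matching_n\sqcup\matching_{\mathrm{mid}}\sqcup\matching_m$, the edges contained in $\cn$, the path edges $q_iq_{i+1}$, and the edges contained in $\cm$ (an honest partition when $l\ge 3$; argue directly for $l\le 2$). Since the connecting path is an induced subpath of $\cnplcm$, $\matching_{\mathrm{mid}}$ is an induced matching of $P_l$, and trivially $|\matching_n|\le\lfloor n/3\rfloor$, $|\matching_m|\le\lfloor m/3\rfloor$. The proof is then a four-way case analysis on whether the extreme path-edges $q_1q_2$ and $q_{l-1}q_l$ lie in $\matching$: if $q_1q_2\in\matching$ then its neighbours $a_1,a_{n-1}$ on $\cn$ are uncovered, so $\matching_n$ sits inside $\cn-\{a_{n-1},a_0,a_1\}$ and $|\matching_n|\le\lfloor n/3\rfloor-\xi_3(n)$; if instead $q_1q_2\notin\matching$ then $\matching_{\mathrm{mid}}$ is confined to the subpath $q_2\cdots q_l$ (and further to $q_2\cdots q_{l-1}$ when also $q_{l-1}q_l\notin\matching$), bounding $|\matching_{\mathrm{mid}}|$ by $\lfloor l/3\rfloor$, resp.\ $\lfloor(l-1)/3\rfloor$. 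Applying the symmetric dichotomy at the $\cm$-end and adding up, every case gives $|\matching|\le R$ once one invokes the monotonicity of $\lfloor\cdot\rfloor$ and the elementary inequality
\[
\Big\lfloor\tfrac{l+1}{3}\Big\rfloor-\Big\lfloor\tfrac{l+1-c}{3}\Big\rfloor\ \le\ c\qquad\text{for }c=\xi_3(n)+\xi_3(m)\in\{0,1,2\}.
\]

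The heart of the matter is this upper-bound casework. The interface dichotomy must be phrased so that the two natural options at each end --- spend an extra path edge at $q_1q_2$ and shrink the $\cn$-matching by $\xi_3(n)$, versus keep the full $\cn$-matching and shorten the usable interior path --- balance out to the same total, while the floors are tracked correctly through the residues of $l$ (and of $n,m$). A genuinely delicate point is that $n\equiv 0$ and $n\equiv 1\pmod 3$ behave differently at the interface (for $n\equiv 1$ a maximum induced matching of $\cn$ can be made to avoid $a_0$ together with one neighbour, for $n\equiv 0$ it cannot), yet both must contribute the same correction $\xi_3(n)=1$, so the conservation law needs a uniform formulation. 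The remaining ingredients --- the base cases $l\le 2$, where the ``path'' degenerates to the bridge edge or the cycles overlap, and (if one prefers to run the bound by induction on $l$ via a deletion step at $q_2$ rather than by the decomposition above) an auxiliary formula for $\nu$ of a cycle with a pendant path --- are routine.
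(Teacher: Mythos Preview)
Your proof is correct and follows essentially the same strategy as the paper: an explicit construction for the lower bound, and for the upper bound a case split on whether the path edges adjacent to the cycles lie in the matching. The paper's argument is terser --- it first proves the one-cycle formula $\nu(C_n\cdot P_l)=\lfloor n/3\rfloor+\lfloor (l-\xi_3(n)+1)/3\rfloor$ via exactly your two-case interface dichotomy, and then for the dumbbell simply says ``same argument at both ends'', whereas you carry out the four-way casework explicitly.
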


\begin{headtheorem}[\autoref{reg_dumbbell}]
	Let $m, n \ge 3$ and $l \ge 1$,
	\begin{enumerate}[(i)]
		\item if $l\equiv 0,1 \;(\Mod 3)$, then
		$$
		\reg{ I(\cnlcm{l}) } = \begin{cases}
		\nu(\cnlcm{l})+2 \qquad \text{\normalfont if } n , m\equiv 2  \;(\Mod 3),\\
		\nu(\cnlcm{l}) +1 \qquad  \text{\normalfont otherwise;}
		\end{cases}
		$$ 
		\item if $l\equiv 2 \;(\Mod3)$, then 
		$$
		\reg{\edge{\cnplcm}}=
		\begin{cases}
		\nu(\cnplcm)+2 \;   & \text{\normalfont if } n\equiv 0,1 \;(\Mod3), \; m\equiv 2 \;(\text{mod}\;3)\\
		\nu(\cnplcm)+1 \;   & \text{\normalfont otherwise}.\\
		\end{cases}
		$$
	\end{enumerate} 
\end{headtheorem}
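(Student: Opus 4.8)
The plan is to sandwich $\reg{\edge{G}}$, where $G=\cnplcm$, between a lower bound extracted from induced subgraphs and an upper bound extracted from a short exact sequence, and to check that the two meet at the asserted value. For the lower bound one has $\reg{\edge{G}}\ge\nu(G)+1$ unconditionally by \cite{Katzman_Mordechai}, which already disposes of the ``otherwise'' branches of (i) and (ii) once the matching upper bound is in hand. For the branches asserting $\nu(G)+2$, I would exhibit an explicit induced subgraph $H\subseteq G$ realizing that value: take a full cycle $C_k$ from $\{C_n,C_m\}$ with $k\equiv 2\;(\Mod 3)$ --- this component supplies the extra regularity, since $\reg{R/\edge{C_k}}=\lfloor k/3\rfloor+1$ --- together with an induced matching of the appropriate size drawn from the remaining cycle and from the interior of the connecting path, kept non-adjacent to $C_k$; in the asymmetric sub-case of (ii) one additionally deletes the attaching vertex of the cycle whose length is $\not\equiv 2\;(\Mod 3)$ and uses an induced matching of the path thereby obtained. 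Since $\reg{R/\edge{H_1\sqcup H_2}}=\reg{R/\edge{H_1}}+\reg{R/\edge{H_2}}$, the regularity of $H$ is $\nu(H)+2$, and a routine computation with the floor functions in \autoref{formula_nu_dumbbell} shows $\nu(H)=\nu(G)$; hence $\reg{\edge{G}}\ge\reg{\edge{H}}=\nu(G)+2$.

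For the upper bound I would argue by induction, using the inductive lemma recalled in Section~1: for a well-chosen $x\in V(G)$,
$$\reg{\edge{G}}\le\max\{\reg{\edge{G\setminus x}},\ \reg{\edge{G\setminus N[x]}}+1\}.$$
Taking $x$ to be an interior vertex of the connecting path (when $l$ is large enough) makes $G\setminus x$ and $G\setminus N[x]$ disjoint unions of unicyclic graphs and forests, whose regularity is delivered by \cite{REG_UNICYCLIC_GRAPH} and \cite{TAI_FOREST_CYCLE}; for $l\le 2$ and for the base cases one instead takes $x$ on a cycle (or, when $l=1$, the vertex shared by the two cycles), which reduces $G\setminus x$ and $G\setminus N[x]$ to disjoint unions of paths and cycles, again covered by \cite{TAI_FOREST_CYCLE}. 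The Lozin transformation serves as a normalization device: applied at a vertex it raises $\reg{\edge{\cdot}}$ and $\nu$ by the same explicit amount, hence preserves the difference $\reg{\edge{\cdot}}-\nu$; using it one cuts the problem down to finitely many base dumbbells and repairs those configurations in which a direct choice of $x$ leaves the estimate off by one.

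The crux of the argument is the bookkeeping over the residues of $n,m,l$ modulo $3$: the difference $\reg{\edge{G}}-\nu(G)$ is either $1$ or $2$ according to these residues, and because the formula of \autoref{formula_nu_dumbbell} mixes several floors of the form $\lfloor a/3\rfloor$, one must verify at each inductive step that the (exactly known) regularities of $G\setminus x$ and $G\setminus N[x]$ fall on the correct side of the dichotomy. I expect the genuinely delicate case to be the asymmetric sub-case of part (ii) --- $l\equiv 2\;(\Mod 3)$ with exactly one of $n,m$ congruent to $2$ --- where both the lower-bound subgraph and the splitting vertex must be chosen so as to single out the cycle of length $\equiv 2\;(\Mod 3)$; establishing that the value is $\nu(G)+2$ there, while it drops back to $\nu(G)+1$ when both cycles have length $\equiv 2\;(\Mod 3)$, and checking the finitely many base cases of the Lozin reduction, is where the bulk of the work is concentrated.
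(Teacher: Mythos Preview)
Your outline matches the paper's approach: both reduce via the Lozin transformation to the base cases $l\in\{1,2,3\}$ (this is \autoref{induct_skel_lozin}), and for each base case sandwich $\reg\edge{G}$ between Katzman's bound (or the regularity of an explicit induced subgraph) below and a short-exact-sequence bound above.

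One specific point needs repair. In the branch $l\equiv 0\;(\Mod 3)$, $n,m\equiv 2\;(\Mod 3)$, after Lozin-reducing to $l=3$, your construction ``one cycle $C_m$ plus an induced matching drawn from the rest, kept non-adjacent to $C_m$'' falls one short: the set of vertices of $C_n\cdot P_3\cdot C_m$ at distance $\ge 2$ from $C_m$ is exactly $V(C_n)$, and $\nu(C_n)=\lfloor n/3\rfloor$, whereas you would need a matching of size $\lfloor n/3\rfloor+1$ to reach $\nu(G)+2=\lfloor n/3\rfloor+\lfloor m/3\rfloor+3$. The paper instead deletes the middle vertex $z_2$ to obtain the induced subgraph $C_n\sqcup C_m$; since both cycles have length $\equiv 2\;(\Mod 3)$, each contributes $\lfloor k/3\rfloor+1$ to $\reg R/\edge{\cdot}$, giving $\reg\edge{C_n\sqcup C_m}=\lfloor n/3\rfloor+\lfloor m/3\rfloor+3=\nu(G)+2$. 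With this patch your lower-bound argument goes through.

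For the upper bounds the paper does not rely on vertex deletion alone: it uses edge deletion for $l=2,3$ and, for the sub-case $l=2$ with $n,m\equiv 2\;(\Mod 3)$, a Kalai--Meshulam decomposition into two paths and a double star. Your vertex-deletion scheme can also be made to work, but note that for $l=3$ with $n,m\equiv 0,1\;(\Mod 3)$ the ``interior vertex'' $z_2$ gives an upper bound of $\nu(G)+2$, not $\nu(G)+1$; one must delete an endpoint of the bridge (e.g.\ $x_1=z_1$) instead. So your caveat about ``repairing those configurations in which a direct choice of $x$ leaves the estimate off by one'' is exactly right, and this is where the case-by-case bookkeeping you anticipate actually lives.
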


In \hyperref[reg_bicyclic_graph_section]{Section 3}, for an arbitrary bicyclic graph $G$, we give a combinatorial characterization of $\reg{I(G)}$ in terms of the induced matching number $\nu(G)$.

\begin{headtheorem}[\autoref{full_characterization}]
	Let $G$ be a bicyclic graph with dumbbell $\cnplcm$.
	The following statements hold.
	\begin{enumerate}[(I)]
		\item Let $ n , m\equiv 0,1  \;(\Mod 3)$, then  $\reg{I(G)} = \nu(G) + 1$.
		\item Let $ n \equiv 0,1  \;(\Mod 3)$ and $m\equiv 2  \;(\Mod 3)$, then  
		$$\nu(G) + 1\le\reg{I(G)} \le \nu(G) + 2,$$ 
		and  $\reg{I(G)} = \nu(G) + 2$ if and only if $\nu(G) = \nu(G\setminus \Gamma_G(C_m))$.
		\item Let $n,m \equiv 2 \; (\Mod3)$ and $l \ge 3$, then  $\nu(G) + 1\le\reg{I(G)} \le \nu(G) + 3$.
		Moreover:
		\begin{enumerate}[(i)]
			\item  $\reg{I(G)} = \nu(G) + 3$   if and only if $\nu\left(G\setminus \Gamma_G(C_n\cup C_m)\right) = \nu(G)$.
			\item  $\reg{I(G)} = \nu(G) + 1$  if and only if the following conditions hold:
			\begin{enumerate}[(a)]
				\item $\nu(G)-\nu(G\setminus \Gamma_G(C_n\cup C_m)) > 1$;
				\item $\nu(G) > \nu(G \setminus \Gamma_G(C_n))$;
				\item $\nu(G) > \nu(G \setminus \Gamma_G(C_m))$.
			\end{enumerate}
		\end{enumerate}
		\item Let $n,m \equiv 2 \; (\Mod3)$ and $l \le 2$, then  $\nu(G) + 1\le\reg{I(G)} \le \nu(G) + 2$.
		If $x$ is a vertex on  $P_l$ and  $\LozGraph$ is the Lozin transformation of $G$ with respect to $x$, then  $\reg{I(G)}=\nu(G)+1$  if and only if the following conditions are satisfied: 
		\begin{enumerate}[(a)]
			\item $\nu(\LozGraph)-\nu(\LozGraph\setminus \Gamma_{\LozGraph}(C_n\cup C_m)) > 1$;
			\item $\nu(\LozGraph) > \nu(\LozGraph \setminus \Gamma_{\LozGraph}(C_n))$;
			\item $\nu(\LozGraph) > \nu(\LozGraph \setminus \Gamma_{\LozGraph}(C_m))$.
		\end{enumerate}
	\end{enumerate}
\end{headtheorem}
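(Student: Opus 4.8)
The plan is to deduce the four statements from the dumbbell case (\autoref{formula_nu_dumbbell} and \autoref{reg_dumbbell}) by an induction that peels away the trees hanging off the dumbbell. Write $D=\cnplcm$ for the dumbbell subgraph of $G$, so that $G$ is obtained from $D$ by attaching rooted forests at some of its vertices, and set $t(G)=|V(G)|-|V(D)|$. The lower bound $\reg{I(G)}\ge\nu(G)+1$ in all of (I)--(IV) is Katzman's inequality \cite{Katzman_Mordechai} recalled in Section~1, so the real content lies in the upper bounds and the ``if and only if'' clauses. When $t(G)=0$ we have $G=D$, and each assertion follows by comparing the matching formula of \autoref{formula_nu_dumbbell} with the regularity dichotomies of \autoref{reg_dumbbell}: one checks, for each residue pattern of $(n,m,l)$ modulo $3$, that the stated $\Gamma$-conditions on $G=D$ specialize to exactly the conditions appearing in \autoref{reg_dumbbell}. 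This step is mechanical but must be carried out case by case.

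For the inductive step ($t(G)>0$) I would fix a leaf $x$ of $G$ lying in one of the attached forests, chosen at maximal distance from $D$, with neighbour $y$, and feed it into the standard exact-sequence estimates for edge ideals: $\reg{I(G)}\le\max\{\reg{I(G\setminus y)},\,\reg{I(G\setminus N_G[y])}+1\}$ together with the complementary inequalities (Section~1) which, for a well-chosen $y$, force $\reg{I(G)}$ to equal one of these two numbers. Both $G\setminus y$ and $G\setminus N_G[y]$ are disjoint unions of simpler graphs---each of them bicyclic with the same dumbbell $D$ but strictly smaller tree part, or unicyclic, or a forest---so the induction hypothesis applies to the bicyclic component while the known characterizations of the regularity of unicyclic graphs \cite{REG_UNICYCLIC_GRAPH} and of forests handle the rest. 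The crux is then purely combinatorial: one must show that $x$ can always be chosen so that $\nu$ behaves predictably under these two deletions, and, more delicately, that the $\Gamma$-invariants $\nu(G)-\nu(G\setminus\Gamma_G(C_n))$, $\nu(G)-\nu(G\setminus\Gamma_G(C_m))$ and $\nu(G)-\nu(G\setminus\Gamma_G(C_n\cup C_m))$ transform under $G\rightsquigarrow G\setminus y$ and $G\rightsquigarrow G\setminus N_G[y]$ in a way compatible with the asserted equivalences. Tracking these invariants is what distinguishes cases (I), (II) and (III); in particular the trichotomy $\nu(G)+1,\nu(G)+2,\nu(G)+3$ in (III) records how many of the two critical cycles still ``see'' a maximal induced matching after the trees have been removed, with the strict inequality in (III)(ii)(a) expressing that both cycles must be escapable at once.

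Case (IV), where $l\le 2$, I would handle through the Lozin transformation \cite{LOZIN_TRANSFORMATION}. Pick a vertex $x$ on $P_l$ and pass to $\LozGraph$: this is again a bicyclic graph, with the same cycles $C_n$ and $C_m$, but its connecting path now has at least three vertices, so case (III)---already established---applies to it. The key input from Section~1 is that the Lozin transformation shifts $\reg$ and $\nu$ by one and the same constant $c$, that is $\reg{I(\LozGraph)}=\reg{I(G)}+c$ and $\nu(\LozGraph)=\nu(G)+c$, and that, being performed at a vertex of $P_l$ away from the two cycles, it is compatible with the three $\Gamma$-deletions. Hence $\reg{I(G)}-\nu(G)=\reg{I(\LozGraph)}-\nu(\LozGraph)$, and the equivalence asserted in (IV) is exactly the equivalence (III)(ii) applied to $\LozGraph$, with conditions (a), (b), (c) transported verbatim. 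To finish one still has to exclude the ``$\nu+3$'' branch (III)(i) for $\LozGraph$, i.e.\ to prove $\nu\big(\LozGraph\setminus\Gamma_{\LozGraph}(C_n\cup C_m)\big)<\nu(\LozGraph)$; here one uses that $l\le 2$ leaves the connecting region of $\LozGraph$ too short to host an induced-matching edge lying strictly between the two cycles that survives deleting both $\Gamma$-neighbourhoods.

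The main obstacle is the combinatorial bookkeeping of the induced matching number: producing, at each step of the induction, a leaf whose removal moves $\nu$ and all three $\Gamma$-invariants in a controlled way, and---only in case (III)---separating three possible regularity values where a single short exact sequence offers a choice between two. A secondary but genuine technical point, needed for (IV), is to verify that the Lozin transformation interacts cleanly with the $\Gamma$-deletions and that the hypothesis $l\le 2$ rules out $\reg{I(G)}=\nu(G)+3$; by contrast, checking the $t(G)=0$ base case against \autoref{reg_dumbbell} is routine once \autoref{formula_nu_dumbbell} is available.
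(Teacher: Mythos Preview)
Your plan for the generic upper bounds in (I)--(IV) and for the Lozin reduction in (IV) matches the paper's approach (Proposition~\ref{general_prop_bicyclic} and Corollary~\ref{cor_case_l<=2}). The real divergence is in how you propose to handle the ``if and only if'' clauses in (II) and (III). You suggest continuing the same leaf-deletion induction and tracking how the three $\Gamma$-invariants transform; you correctly flag this as the main obstacle, but you do not resolve it, and I do not believe it can be made to work in that form. The conditions $\nu(G)>\nu(G\setminus\Gamma_G(C_m))$, etc., are not stable under removing the neighbour of an arbitrary tree leaf: the edge witnessing the strict inequality may well live in the tree you are peeling off, so after one deletion the hypothesis can flip and the induction stalls.

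The paper handles the characterizations by two entirely different mechanisms, neither of which inducts on the tree part. For the direction ``$\Gamma$-condition holds $\Rightarrow$ regularity is large'' it does no induction at all: $G\setminus\Gamma_G(C_m)$ (resp.\ $G\setminus\Gamma_G(C_n\cup C_m)$) is an induced subgraph that splits as $C_m$ together with forests or a unicyclic piece, so its regularity is computed directly from \cite{REG_UNICYCLIC_GRAPH} and \autoref{reg_Forests} and equals $\nu(G)+2$ (resp.\ $\nu(G)+3$) under the hypothesis. For the converse direction the paper deletes a vertex \emph{on the cycle}, not a tree leaf. It introduces a decomposition $S_{G,0}(C_m)=F_1\cup\cdots\cup F_c$ of the pieces rooted at cycle vertices, with $H_i\subset F_i$ the part at distance $\ge 2$; the failure of the $\Gamma$-condition forces $\nu(F_i)>\nu(H_i)$ for some $i$ (Lemma~\ref{components_case_only_one_bad}, Lemma~\ref{implication_trees_nu_gamma}), and deleting the root $x\in F_i\cap C_m$ makes both $G\setminus x$ and $G\setminus N[x]$ unicyclic or forests, where the characterization from \cite{REG_UNICYCLIC_GRAPH} already applies and the choice of $i$ ensures $\nu(G\setminus N[x])<\nu(G)$. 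In (III)(ii) this has to be run in three separate steps (Theorem~\ref{charact_thm_nu+1_in_bad_case}), interleaving the unicyclic criterion with a short logical reduction, and this is where the real work lies---not in bookkeeping $\Gamma$-invariants through tree deletions.

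One further correction on (IV): you do not need a separate argument excluding $\nu+3$ for $\mathcal{L}_x(G)$. The paper proves $\reg I(G)\le\nu(G)+2$ directly on $G$ when $l\le 2$ (Proposition~\ref{general_prop_bicyclic}(iii), by the same leaf induction), and then the Lozin shift $\reg I(\mathcal{L}_x(G))-\nu(\mathcal{L}_x(G))=\reg I(G)-\nu(G)$ rules out $\nu+3$ for $\mathcal{L}_x(G)$ automatically. Also note that for $l\le 2$ every vertex of $P_l$ already lies on a cycle, so your remark about performing the Lozin move ``away from the two cycles'' does not apply; fortunately the paper needs no compatibility between Lozin and the $\Gamma$-deletions, only the invariance of $\reg-\nu$.
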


In \hyperref[Upper_bound_section]{Section 4}, we investigate the asymptotic behavior of regularity of powers of  $ I(\cnlcm{l}) $ when $ l\leq2 $. The approach takes advantage of the notion of even-connectedness and the relations between the induced matching number of graphs and the regularity of the edge ideal.

\begin{headtheorem}[\autoref{powers_lower_bound}]
	Let $\cnplcm$ with $l\le 2$, then
	$$
	\reg{{I(\cnplcm)}^q} = 2q + \reg{I(\cnplcm)} - 2
	$$
	for any $q \ge 1$.
\end{headtheorem}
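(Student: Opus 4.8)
The equality is equivalent to the two estimates $\reg{I(\cnplcm)^q} \ge 2q + \reg{I(\cnplcm)} - 2$ and $\reg{I(\cnplcm)^q} \le 2q + \reg{I(\cnplcm)} - 2$, which I would prove by quite different arguments. For the lower bound I would begin from the general inequality $\reg{I(G)^q} \ge 2q + \nu(G) - 1$, valid for every graph $G$ and every $q \ge 1$ and recalled in Section~1. By \autoref{reg_dumbbell}, $\reg{I(\cnplcm)} = \nu(\cnplcm) + 1$ in all cases except two residue patterns --- $l = 1$ with $n \equiv m \equiv 2 \;(\Mod 3)$, and $l = 2$ with $n \equiv 0,1 \;(\Mod 3)$ and $m \equiv 2 \;(\Mod 3)$ --- so outside of these the general bound already yields the claim.

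In the two exceptional cases, where $\reg{I(\cnplcm)} = \nu(\cnplcm) + 2$, I would exhibit an induced subgraph $H$ of $\cnplcm$ which is a disjoint union of a cycle and a path and satisfies $\reg{I(H)^q} = 2q + \nu(\cnplcm)$. Concretely, for $l = 2$ take $H = P_{n-1} \sqcup C_m$, obtained by deleting the endpoint, lying on $C_n$, of the bridge $P_2$; for $l = 1$ take $H = C_n \sqcup P_{m-3}$, obtained by deleting the two neighbours on $C_m$ of the vertex shared by $C_n$ and $C_m$. In both cases $\nu(H) = \nu(\cnplcm)$, and the cycle occurring in $H$ has length $\equiv 2 \;(\Mod 3)$. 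Since $I(H)$ is a sum of two edge ideals supported on disjoint sets of variables, the regularity of its powers is computed by the known formula in terms of the regularities of the powers of the two summands; feeding in $\reg{I(F)^i} = 2i + \nu(F) - 1$ for a forest $F$ and, for a cycle $C$ of length $\equiv 2 \;(\Mod 3)$, the values $\reg{I(C)} = \nu(C) + 2$ and $\reg{I(C)^j} = 2j + \nu(C) - 1$ for $j \ge 2$ --- the discrepancy at the first power being exactly what lifts the answer from $2q + \nu(\cnplcm) - 1$ to $2q + \nu(\cnplcm)$ --- one obtains $\reg{I(H)^q} = 2q + \nu(\cnplcm)$. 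Finally, the regularity of powers cannot increase when one passes to an induced subgraph (a routine argument, setting to zero the variables of the deleted vertices), so $\reg{I(\cnplcm)^q} \ge \reg{I(H)^q} = 2q + \reg{I(\cnplcm)} - 2$.

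For the upper bound I would induct on $q$, the case $q = 1$ being trivial. For the inductive step I would use Banerjee's inequality \cite{banerjee}, which bounds $\reg{I(G)^{q+1}}$ by the maximum of $\reg{I(G)^q} + 2$ and the numbers $\reg{(I(G)^{q+1} : M)} + 2q$ as $M$ ranges over the minimal monomial generators of $I(G)^q$. The first term is handled by the inductive hypothesis, so it remains to prove $\reg{(I(\cnplcm)^{q+1} : M)} \le \reg{I(\cnplcm)}$ for every such $M$. By Banerjee's structural description, $I(\cnplcm)^{q+1} : M$ is a monomial ideal generated in degree at most two whose squarefree quadratic part is the edge ideal of the graph $G^M$ obtained from $\cnplcm$ by joining all pairs of vertices that are even-connected with respect to $M$, possibly together with squares of variables arising from closed even walks.

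The main obstacle --- where I expect the bulk of the work to lie --- is the uniform control of the ideals $I(\cnplcm)^{q+1} : M$, equivalently of the even-connection graphs $G^M$. Here the hypothesis $l \le 2$ is decisive: because the connecting path is so short, even-connection with respect to any $M$ can only insert chords inside $C_n$, chords inside $C_m$, and a controlled set of edges joining $C_n$ to $C_m$, so the graphs $G^M$ fall into a short explicit list of shapes. For each of them I would establish $\reg{(I(\cnplcm)^{q+1} : M)} \le \reg{I(\cnplcm)}$ by means of short exact sequences --- convenient here because, for the connecting vertex $x$, the graph $\cnplcm \setminus N[x]$ is a forest when $l \le 2$ --- together with the regularity results for forests, cycles and unicyclic graphs recalled in Section~1 and the characterization in \autoref{full_characterization}, treating the variable squares separately (they only lower the regularity). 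Feeding these bounds back into Banerjee's inequality closes the induction and gives the equality.
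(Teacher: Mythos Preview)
Your overall architecture matches the paper's: split into a lower bound via an induced subgraph with known power regularity, and an upper bound via Banerjee's inequality reduced to $\reg\big(I^{q+1}:M\big) \le \reg I$. The lower bound is essentially correct. For $l=1$ the paper uses $H = C_m \cdot P_{n-1}$ (a connected unicyclic graph) rather than your $C_n \sqcup P_{m-3}$, and in both cases invokes directly the unicyclic powers formula $\reg I(H)^q = 2q + \reg I(H) - 2$ from \cite{REG_UNICYCLIC_GRAPH}. That formula applies equally to your $H$ (it is still unicyclic), and citing it is cleaner than the disjoint-sum computation you sketch.

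The gap is in the upper bound. Your plan---that the even-connection graphs $G^M$ ``fall into a short explicit list of shapes'' to be handled case by case---is not how the paper proceeds, and is not really workable: $M$ ranges over all $q$-fold products of edges, so there is no short list. The paper instead proves a structural lemma (\autoref{claim cncm}): if $a=\{u,v\}$ is any new edge of $G'$ arising from an even-connection path $p_0,\ldots,p_{2s+1}$, then $\bigcup_i N_{G'}[p_i] \subset N_{G'}[u]\cup N_{G'}[v]$. The consequence is that $G'_a$ is obtained from $G'$ by removing either an entire cycle or at least six vertices. In either situation one embeds $G'_a$ in a graph with a Hamiltonian path on at most $n+m+l-6$ vertices and applies the bound $\reg I(H) \le \lfloor (|V(H)|+1)/3\rfloor + 1$ (\autoref{Hamiltonian}) together with the inequality $\reg I(\cnplcm) \ge \lfloor (n+m+l+1)/3\rfloor$, which holds precisely when $l\le 2$, to conclude $\reg I(G'_a) \le \reg I(G) - 1$. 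Peeling off the new edges one at a time via the short exact sequence then yields $\reg I(G') \le \reg I(G)$; the whiskers coming from squares are removed first by a separate short argument, as you anticipate. Your sketch mentions neither the neighborhood lemma nor the Hamiltonian-path count, and without them the step $\reg\big(I^{q+1}:M\big)\le\reg I$ does not close.
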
 

For the case $ l\ge3 $, there are immediate examples for which the above theorem does not hold (see  \autoref{Equality_counter_example}).

\section{Preliminaries}\label{Preliminaries_section} 

Let $R=K[x_1,\dots,x_r]$ be the standard graded polynomial ring over a field $K$ and let $\mathbf{m}=(x_1,\ldots,x_r)$ be its maximal homogeneous ideal. For a graded $R$-module $ M $, one can define the Castelnuovo-Mumford regularity in different terms. We recall the definition of the regularity of an $R$-module $ M $ by the minimal free resolution $ M $. The \textit{minimal graded free resolution of $M$} is an exact sequence of the form 

\begin{center}
$0\rightarrow F_p\rightarrow F_{p-1}\rightarrow \cdots \rightarrow F_0\rightarrow M \rightarrow 0, $
\end{center}
where each $F_i$ is a  graded free $R$-module of the form $F_i=\bigoplus \limits_{\substack{j \in \mathbb{N}}} R(-j)^{\beta_{i,j}(M)} $, each ${\varphi}_i : F_i \rightarrow F_{i-1}$, with $F_{-1} := M$, is a graded homomorphism of degree zero  such that ${\varphi}_{i+1}(F_{i+1}) \subseteq \textbf{m} F_i$ for all $i \geq 0$. The numbers $\beta_{i,j}(M)$ are important invariants, known as the \textit{graded Betti numbers of $M$}. In particular, the number $\beta_i = \sum \limits_{\substack{j \in \mathbb{N}}} \beta_{i,j}(M)$ is called the \textit{i-th Betti number of $M$}  and $\beta_{i,j}(M)$ is the \textit{i-th Betti number  of $M$ of degree $j$}. Note that the minimal free resolution of $M$ is unique up to isomorphism, hence the graded Betti numbers are uniquely determined.

\begin{dfn}
Let $M$ be a finitely generated graded $R$-module. The regularity of $M$ is given by
$$
\reg(M) = \max\lbrace j-i \ | \ \beta_{i,j}(M)\neq 0 \rbrace.
$$
\end{dfn}

\begin{rem}
	Note that, if $I$ is a graded ideal of $R$, then $\reg(R/I)= \reg(I)-1$.
\end{rem}

Let $G= (V,E)$ be a graph with vertex set $V=\lbrace v_1,\dots , v_l\rbrace$. Here, we recall some classes of graphs that we need for this study.
\begin{dfn}
Let $G= (V,E)$ be a graph.
\begin{enumerate}[(i)]
\item $G$ is called a path with $l$ vertices, denoted by $P_l$, if $V=\lbrace v_1,\dots , v_l\rbrace$ and $\{v_i, v_{i+1}\} \in E$ for all $1\leq i\leq l-1$.
\item $G$ is called a cycle with $n$ vertices,  denoted by $C_n$, if $V=\lbrace v_1,\dots , v_n\rbrace$ and $\{v_i, v_{i+1}\} \in E $ for all $1\leq i\leq n-1$ and  $\{v_n, v_1\} \in E$.
\item $G$ is called a dumbbell graph if $G$ contains two cycles $C_n$ and $C_m$ joined by a path $P_l$ of $l$ vertices. We denote it by $\cn \cdot P_l \cdot C_m$. (See \autoref{example_first_dumbbells})
\end{enumerate}
\end{dfn}

 For a vertex $u$ in a graph $G=(V,E)$, let $N_{G}(u)=\lbrace v\in V | \{u, v\} \in E\rbrace$ be the set of \textit{neighbors} of $u$, and set $N_G[u] := N_G(u)\cup \lbrace u\rbrace $. An edge $e$ is \textit{incident} to a vertex $u$ if $u \in e$. The degree of a vertex $u \in V$, denoted by $\deg _G(u)$, is the number of edges incident to $u$. When there is no confusion, we will omit $G$ and write $N(u),N[u]$ and $\deg(u)$.
For an edge $e$ in a graph $G=(V,E)$, we define $G\setminus e$ to be the subgraph of $G$ obtained by deleting $e$ from $E$ (but the vertices are remained). For a subset $W \subseteq V$ of the vertices in $G$, we define $G \setminus W$ to be the subgraph of G deleting the vertices of W and their incident edges. When $W = \lbrace u\rbrace$ consists of a single vertex, we write $G\setminus u$ instead of $G\setminus \lbrace u\rbrace$. For an edge $e = \lbrace u,v\rbrace \in E$, let $N_G[e] = N_G[u] \cup N_G[v]$ and define $G_e$ to be the induced subgraph of $G$ over the vertex set $V \setminus N_G[e]$.

One can think of the vertices of  $G=(V,E)$ as the variables of the polynomial ring $R=K[x_1,\dots,x_r]$ for convenience.
Similarly, the edges of $G$ can be considered as square free monomials of degree two. 
By abuse of notation, we use $e$ to refer to both the edge $e=\{x_i,x_j\}$ and the monomial $e=x_ix_j \in I(G)$.

\medskip

Let $G=(V,E)$ be a graph and  $W \subseteq V$. The \textit{induced subgraph of $G$ on $W$}, denoted by $G[W]$, is the graph with vertex set $W$ and edge set $\{ e \in E \ | \ e \subseteq W \}$.

\begin{dfn}
	Let $G=(V,E)$ be a graph.
	
	\noindent A collection $C$ of edges of $G$ is called a \textit{matching} if the edges in $C$ are pairwise disjoint. The maximum size of a matching in $G$ is called its \textit{matching number}, which is denoted by $\normalfont\text{match}(G)$.
	
	\noindent A collection $C$ of edges of $G$ is called an \textit{induced matching} if $C$ is a matching, and $C$ consists of all edges of the induced subgraph $G \big[ \bigcup \limits_{\substack{e \in C}} e \big]$ of $G$. The maximum size of an induced matching in $G$ is called its \textit{induced matching number} and it is denoted by $\nu(G)$.
\end{dfn}

\begin{rem}(\cite[Remark 2.12]{TAI_FOREST_CYCLE})\label{nu_path}
	Let $P_l$ be a path of $l$ vertices, then we have $$\nu(P_l)=\lfloor \frac{l+1}{3} \rfloor$$
\end{rem}

\begin{rem} \label{rem_struct_match_circle}
	(\cite[Remark 2.13]{TAI_FOREST_CYCLE})
	Let $C_n$ be a cycle of $n$ vertices, then we have 
	$$
	\nu(C_n)=\lfloor \frac{n}{3} \rfloor.
	$$
	
	Depending on $r = n \;\Mod3$ we can assume the following:
	\begin{enumerate}[(i)]
		\item when $r = 0$, there exists a maximal induced matching of $C_n$ that does not contain the edges $x_1x_2$ and $x_1x_n$;
		\item when $r = 1$, there exists a maximal induced matching of $C_n$ that does not contain the edges $x_1x_2$, $x_1x_n$ and $x_{n-1}x_n$;
		\item when $r = 2$, there exists a maximal induced matching of $C_n$ that does not contain the edges $x_1x_2$, $x_2x_3$, $x_1x_n$ and $x_{n-1}x_n$.
	\end{enumerate}
\end{rem}

\begin{theorem}\cite[Lemma 3.1, Theorems 3.4 and 3.5]{Tai_survey}
	\label{short exact sequence}
	Let $G=(V,E)$ be a graph.
	\begin{enumerate}[(i)]
		\item If $H$ is an induced subgraph of $G$, then $\reg {\edge H} \leq \reg{\edge G};
		$
		\item Let $x\in V$, then 
		$$
		\reg{\edge G}\leq \max \lbrace \reg {\edge{G\setminus x}}, \reg {\edge {G \setminus N[x]}}+1\rbrace;
		$$
		\item Let $e\in E$, then 
		$$
		\reg{\edge G}\leq \max \lbrace2,\reg{\edge {G\setminus e}}, \reg{\edge {G_e}}+1 \rbrace.
		$$
	\end{enumerate}
\end{theorem}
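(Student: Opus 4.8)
The three parts are standard homological statements, and I would prove them independently. Two of them rest on the basic bounds on regularity in a short exact sequence $0\to A\to B\to C\to 0$ of finitely generated graded modules, namely $\reg(B)\le\max\{\reg(A),\reg(C)\}$ and $\reg(C)\le\max\{\reg(A)-1,\reg(B)\}$; I would quote these, since they drop out of the long exact sequence in local cohomology through the description of $\reg$ by the top nonvanishing degrees of the local cohomology modules. I also use repeatedly that $\reg(M(-j))=\reg(M)+j$, that $\reg(\edge{G})=\reg(R/\edge{G})+1$, and that a variable not occurring in a monomial ideal may be discarded without changing the regularity.

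For (i) I would pass to the independence complex $\Delta=\operatorname{Ind}(G)$, for which $I(G)$ is the Stanley--Reisner ideal and $R/I(G)=K[\Delta]$. If $H=G[W]$ then $\operatorname{Ind}(H)$ is the restriction $\Delta|_W$, and crucially $\operatorname{Ind}(H)|_\tau=\Delta|_\tau$ for every $\tau\subseteq W$. By Hochster's formula $\beta_{i,\tau}(K[\Delta])=\dim_K\tilde H_{|\tau|-i-1}(\Delta|_\tau;K)$, so with $S=K[x_i\mid x_i\in W]$ one gets $\beta_{i,\tau}(S/I(H))=\beta_{i,\tau}(R/I(G))$ for all $\tau\subseteq W$. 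Thus every nonzero graded Betti number of $S/I(H)$ occurs, in the same bidegree, for $R/I(G)$, and therefore $\reg(\edge{H})=\reg(S/I(H))+1\le\reg(R/I(G))+1=\reg(\edge{G})$.

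For (ii) I would take the sequence $0\to A\to R/I(G)\to C\to 0$, where $A=\left(R/(I(G):x)\right)(-1)$ is the image of multiplication by $x$ and $C=R/(I(G),x)$. Setting $x=0$ kills exactly the edges at $x$, so $C\cong R/\edge{G\setminus x}$ (an unused variable aside) and $\reg(C)=\reg(\edge{G\setminus x})-1$. The colon ideal is $I(G):x=(N_G(x))+\edge{G\setminus N_G[x]}$; killing the neighbours of $x$ leaves the edge ring of $G\setminus N_G[x]$ with $x$ isolated, so $\reg{R/(I(G):x)}=\reg(\edge{G\setminus N_G[x]})-1$ and hence $\reg(A)=\reg(\edge{G\setminus N_G[x]})$. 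Plugging into $\reg(R/I(G))\le\max\{\reg(A),\reg(C)\}$ and adding $1$ gives the stated bound.

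For (iii) I would split the chosen edge off $G\setminus e$: since $\edge{G}=\edge{G\setminus e}+(uv)$ with $e=\{u,v\}$, there is a sequence $0\to A'\to R/\edge{G\setminus e}\to R/\edge{G}\to 0$ with $A'=\left(R/(\edge{G\setminus e}:uv)\right)(-2)$. Computing the colon, $\edge{G\setminus e}:uv$ contains every neighbour of $u$ or $v$ as a linear form, and after killing $N_G(u)\cup N_G(v)$ one is left with $\edge{G_e}$ together with the free variables $u,v$; hence $\reg(A')-1=\reg(\edge{G_e})$ when $G_e$ has an edge, and $\reg(A')-1=1$ when $G_e$ is edgeless. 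The inequality $\reg(R/\edge{G})\le\max\{\reg(A')-1,\,\reg(R/\edge{G\setminus e})\}$, again after adding $1$, becomes $\reg(\edge{G})\le\max\{2,\reg(\edge{G\setminus e}),\reg(\edge{G_e})+1\}$, the term $2$ being precisely what covers the edgeless case. The points needing the most care---and the only real obstacles---are the two colon-ideal computations, the correct accounting of the shifts $(-1)$ and $(-2)$ and of the freed variables, and the degenerate cases in which $G\setminus N_G[x]$ or $G_e$ has no vertex; it is this last case that forces the guard term $2$ in part (iii).
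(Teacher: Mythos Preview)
Your proof is correct and follows the standard route for these inequalities. However, the paper does not prove this theorem at all: it is stated as a preliminary result with an external citation to \cite[Lemma 3.1, Theorems 3.4 and 3.5]{Tai_survey}, and no argument is given in the paper itself. So there is nothing to compare against beyond noting that you have supplied a self-contained proof where the authors chose only to quote the result.

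For what it is worth, your arguments are exactly the ones one finds in the cited survey and the surrounding literature: Hochster's formula for (i), the multiplication-by-$x$ sequence $0\to (R/(I:x))(-1)\to R/I\to R/(I,x)\to 0$ for (ii), and the sequence $0\to (R/(I(G\setminus e):uv))(-2)\to R/I(G\setminus e)\to R/I(G)\to 0$ for (iii). Your colon-ideal computations and shift bookkeeping are right, and your treatment of the edgeless degenerate case correctly explains the guard term~$2$.
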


Now we recall the concept of even-connection introduced by Banerjee in \cite{banerjee}.

\begin{dfn}[\cite{banerjee}]
	Let $G = (V,E)$ be a graph with edge ideal $I = \edge G$. Two 
	vertices $x_i$ and $x_j$ in $G$ are called \textit{even-connected} with 
	respect to an $s$-fold product $M = e_1\cdots e_s$, 
	where $e_1,\dots ,e_s$ are edges in $G$, if there is a path $p_0,
	\dots,p_{2l+1}$, for some $l \geq 1$, in $G$ such that the following 
	conditions hold:
	\begin{enumerate}[(i)]
		\item $p_0  =  x_i $ and $p_{2l+1} =  x_j$;
		\item for all $0\leq j\leq l-1, \lbrace p_{2j+1}, p_{2j+2}\rbrace= e_i$ for some $i$;
		\item for all $i$, $\big{|} \lbrace j \ | \ \lbrace p_{2j+1}, p_{2j+2}\rbrace= e_i \rbrace\big{|} \leq \big{|} \lbrace t\ |\  e_t=e_i\rbrace \big{|}$.
	\end{enumerate}
\end{dfn}


\begin{theorem}\cite[Theorems 6.1 and 6.5]{banerjee}\label{G'}
	 Let $M = e_1e_2\cdots e_s$ be a minimal generator of $I^s$. Then $(I^{s+1} \colon M)$ is minimally generated by monomials of degree $2$, and $uv$ ($u$ and $v$ may be the same) is a minimal generator of $(I^{s+1} \colon M)$ if and only if either $\lbrace u,v\rbrace\in E$  or $u$ and $v$ are even-connected with respect to $M$.
\end{theorem}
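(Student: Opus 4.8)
The plan is to reduce the whole statement to a single membership criterion for the monomial ideal $(I^{s+1}\colon M)$ and then prove that criterion by an alternating/augmenting-walk argument. Since $I^{s+1}$ and $M$ are monomials, $(I^{s+1}\colon M)$ is again a monomial ideal, and a monomial $w$ lies in it exactly when $wM\in I^{s+1}$, i.e. when some product of $s+1$ edges $f_1\cdots f_{s+1}$ divides $w\,e_1\cdots e_s$. First I would dispose of the low-degree cases: there is no generator in degree $0$, and none in degree $1$ because $x_kM$ has odd degree $2s+1<2s+2$ and so cannot be divisible by any product of $s+1$ edges. Consequently every minimal generator has degree at least $2$, and a degree-two monomial lies in the colon if and only if it is a minimal generator of it (it cannot be a multiple of a smaller generator). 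Thus the entire theorem reduces to the equivalence: $wM\in I^{s+1}$ if and only if $w$ is divisible by some degree-two monomial $uv$ with either $\{u,v\}\in E$ or $u,v$ even-connected with respect to $M$.

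For the easy direction of this equivalence I would exhibit the factorization explicitly. If $\{u,v\}\in E$ then $uv\cdot M\in I\cdot I^s=I^{s+1}$. If instead $u=p_0$ and $v=p_{2l+1}$ are joined by an even-connecting path $p_0,\dots,p_{2l+1}$, then condition (iii) says precisely that the $l$ path-edges $\{p_1,p_2\},\{p_3,p_4\},\dots,\{p_{2l-1},p_{2l}\}$ occur among the $e_i$ with sufficient multiplicity, so $M=\big(\prod_{j=0}^{l-1}p_{2j+1}p_{2j+2}\big)M'$ with $M'$ a product of $s-l$ edges. Then
\[
uv\,M \;=\; (p_0p_1)(p_2p_3)\cdots(p_{2l}p_{2l+1})\cdot M'
\]
is a product of $(l+1)+(s-l)=s+1$ edges of $G$, hence lies in $I^{s+1}$; multiplying by any monomial keeps us in $I^{s+1}$, so divisibility by such a $uv$ forces membership.

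The substance is the converse. Here I would begin from a divisibility $f_1\cdots f_{s+1}\mid w\,e_1\cdots e_s$ and compare the two edge-multisets $\{e_1,\dots,e_s\}$ and $\{f_1,\dots,f_{s+1}\}$, keeping the vertexwise inequalities $\deg_x(f_1\cdots f_{s+1})\le \deg_x(w)+\deg_x(e_1\cdots e_s)$ as the governing invariant. Since the $f$-side carries two more vertex-incidences in total than the $e$-side, any incidence where the $f$-side exceeds the $e$-side must be paid for by a variable of $w$. I would formalise the extraction of an even-connection as an augmenting walk on the multigraph with edge-multiset $\{e_i\}\cup\{f_j\}$: start at a variable $u\mid w$ lying in some $f$-edge, cross that $f$-edge, and at the far endpoint either stop (if that endpoint is again supplied by $w$) or cross an $e$-edge that covers it, repeating. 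Reading the $f$-edges as the $G$-edges $\{p_{2j},p_{2j+1}\}$ and the $e$-edges as the path-edges $\{p_{2j+1},p_{2j+2}\}$ produces exactly an even-connecting path from $u$ to a second variable $v\mid w$ (with $u=v$ allowed when the walk closes up, giving the generator $u^2$), and the requirement that the walk never reuses an $e_i$ beyond its multiplicity in $M$ is precisely condition (iii). This yields a degree-two divisor $uv$ of $w$ lying in the colon ideal, which simultaneously proves the characterisation and the degree-two generation.

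I expect the main obstacle to be making the augmenting-walk argument rigorous while bookkeeping multiplicities. One must guarantee that the alternating walk can always be continued until it reaches a $w$-variable rather than cycling indefinitely among the $e_i$; one must handle vertices that are visited more than once; and one must treat the degenerate closed-walk producing the generator $u^2$. The vertexwise inequalities above are what I would use both to certify that a continuing $e$-edge exists at each non-terminal step (so the walk cannot get stuck) and to recover condition (iii), while a monovariant (the total unmatched $f$-incidence, which strictly decreases along the walk) forces termination at a second $w$-variable.
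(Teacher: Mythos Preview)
The paper does not prove this statement at all; it is quoted verbatim as a result of Banerjee \cite[Theorems~6.1 and~6.5]{banerjee} and used as a black box in the subsequent arguments. There is therefore no proof in the paper to compare your proposal against.

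That said, your outline is essentially the argument Banerjee gives in the cited reference. The reduction to a degree-two membership criterion via the degree count $\deg(x_kM)=2s+1<2(s+1)$ is correct (the word ``odd'' is a red herring; only the inequality matters). The ``easy'' direction is exactly the telescoping rewrite $uv\cdot M=(p_0p_1)(p_2p_3)\cdots(p_{2l}p_{2l+1})\cdot M'$ that Banerjee uses. For the converse, the alternating $f$-edge/$e$-edge walk you describe is the right picture, and you have correctly identified the delicate point: one must show the walk terminates at a second $w$-variable rather than looping indefinitely, while respecting the multiplicity constraint (iii). Your proposed monovariant is not quite sharp as stated (``unmatched $f$-incidence'' does not obviously strictly decrease at every step, since an $e$-step does not consume an $f$-edge); the cleanest fix is to remove from the multisets each $f$-edge and $e$-edge as it is traversed and argue that the vertexwise inequalities $\deg_x(\prod f_j)\le \deg_x(w)+\deg_x(\prod e_i)$ are preserved for the residual system, so that the walk can always be continued and the total number of remaining edges strictly decreases. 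With that bookkeeping made precise the argument goes through, including the closed-walk case producing a square generator $u^2$.
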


\begin{rem}\cite[Lemma 6.11]{banerjee}
	\label{newGraphBanerjee}
	Let ${(I^{s+1} \colon M)}^{\text{pol}}$ be the polarization of the ideal $(I^{s+1} \colon M)$ (see e.g. \cite[\S 1.6]{HERZOG_HIBI}).
	From the previous theorem we can construct a graph $G^\prime$ whose edge ideal is given by ${(I^{s+1} \colon M)}^{\text{pol}}$.
	The new graph $G^\prime$ is given by:
	\begin{enumerate}[(i)]
		\item All the vertices and edges of $G$.
		\item Any two vertices $u,v$, $u \neq v$ that are even-connected with respect to $M$ are connected by an edge in $G^\prime$.
		\item For every vertex $u$ which is even-connected to itself with respect to $M$, there is a new vertex $u^\prime$ which is connected to $u$ by an edge and not connected to any other vertex (so $uu^\prime$ is a whisker).
	\end{enumerate}
\end{rem}

\begin{theorem}\cite[Theorem 5.2]{banerjee}\label{banerjee}
	Let $G$ be a graph and  $\lbrace m_1,\dots,m_r\rbrace$ be the set of minimal monomial generators of $\edge {G}^q$ for all $q\geq 1$, then 
	$$
	\reg{\edge G^{q+1}}\leq \max \lbrace \reg{(\edge G^q \colon m_l)} +2q, 1\leq l\leq r, \reg {\edge{G}^q} \rbrace. 
	$$
\end{theorem}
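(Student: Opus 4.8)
The plan is to build the power $I^{q+1}$ (writing $I:=\edge{G}$) up to $I^{q}$ through a short filtration obtained by adjoining the generators $m_1,\dots ,m_r$ one at a time, and to propagate a regularity estimate along the resulting short exact sequences. Concretely, set $J_0=I^{q+1}$ and $J_l=(I^{q+1},m_1,\dots ,m_l)$ for $1\le l\le r$. Since $I^{q+1}=I\cdot I^{q}\subseteq I^{q}$ and $m_1,\dots ,m_r$ generate $I^{q}$, we have $J_r=I^{q}$. Each $m_l$ is a product of $q$ edges, hence $\deg m_l=2q$, and for every $l$ there is a short exact sequence
$$
0\longrightarrow \bigl(R/(J_{l-1}:m_l)\bigr)(-2q)\xrightarrow{\ \cdot m_l\ } R/J_{l-1}\longrightarrow R/J_l\longrightarrow 0 .
$$
Applying the standard estimate $\reg{M}\le\max\{\reg{M'},\reg{M''}\}$ to $0\to M'\to M\to M''\to0$ and telescoping over $l=1,\dots ,r$ gives
$$
\reg{R/I^{q+1}}\le\max\Bigl\{\max_{1\le l\le r}\bigl(\reg{R/(J_{l-1}:m_l)}+2q\bigr),\ \reg{R/I^{q}}\Bigr\}.
$$
Adding $1$ throughout to pass from quotients to ideals turns this into a bound of exactly the asserted form, with the single discrepancy that the colon ideals that appear are the filtration colons $(J_{l-1}:m_l)$ and not the global colons $(I^{q+1}:m_l)$ occurring in \autoref{G'}.

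The remaining and decisive step is therefore to show that these filtration colons carry no more regularity than the global ones, that is, $\reg{(J_{l-1}:m_l)}\le\max_{1\le j\le r}\reg{(I^{q+1}:m_j)}$ for each $l$. Because all ideals involved are monomial, colon distributes over sums and
$$
(J_{l-1}:m_l)=(I^{q+1}:m_l)+\sum_{j<l}\bigl(m_j/\gcd(m_j,m_l)\bigr).
$$
Each extra generator $w_j:=m_j/\gcd(m_j,m_l)$ appears because $w_j\,m_l=\operatorname{lcm}(m_j,m_l)\in(m_j)\subseteq J_{l-1}$; since $\operatorname{lcm}(m_j,m_l)$ can have degree as low as $2q+1$, it need not lie in $I^{q+1}$, so $(J_{l-1}:m_l)$ is in general strictly larger than $(I^{q+1}:m_l)$ and the two ideals genuinely differ. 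The content of the theorem thus reduces to the claim that adjoining the monomials $w_j$ to the degree-two ideal $(I^{q+1}:m_l)$ does not push its regularity past the maximum of the regularities of the global colon ideals.

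To attack that claim I would first use \autoref{G'} and \autoref{newGraphBanerjee} to understand each global colon: $(I^{q+1}:m_l)$ is generated in degree two and, after polarisation, is the edge ideal of the even-connection graph $G'$ associated with $m_l$, so that $\reg{(I^{q+1}:m_l)}$ is a combinatorial quantity read off from $G'$. I would then reintroduce the extra generators $w_j$ one at a time by means of further short exact sequences $0\to (R/(K:w))(-\deg w)\to R/K\to R/(K,w)\to0$, reducing the control of each increment to a bound on $\reg{(K:w)}$. The hard part — and what I expect to be the main obstacle — is precisely this last control: the $w_j$ are higher-degree monomials encoding the overlaps between distinct generators $m_j,m_l$ of $I^{q}$, they are exactly the transition monomials occurring in the first syzygies of $m_1,\dots ,m_r$, and one must argue that they contribute nothing to the regularity beyond what the global colons and $\reg{I^{q}}$ already account for. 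This is where the hypothesis that the $m_l$ generate the power $I^{q}$ (rather than an arbitrary monomial ideal) must be used in an essential way, and disentangling the interaction between these syzygies and the degree-two even-connection ideals is the crux of the argument.
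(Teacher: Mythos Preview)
This theorem is not proved in the paper; it is quoted from \cite[Theorem~5.2]{banerjee} and invoked as a black box. There is therefore no in-paper proof to compare your attempt against, and what follows is an assessment of your argument on its own terms.

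Your filtration $J_0=I^{q+1}\subseteq J_1\subseteq\cdots\subseteq J_r=I^q$ and the chain of short exact sequences are the correct skeleton, and you have correctly isolated the one non-formal step: the sequences hand you the filtration colons $(J_{l-1}:m_l)$, whereas the stated bound involves the global colons $(I^{q+1}:m_l)$, and these genuinely differ. Your degree observation is on point---one can write down small edge ideals in which $(J_{l-1}:m_l)$ acquires a linear generator $m_j/\gcd(m_j,m_l)$ that is absent from $(I^{q+1}:m_l)$ (for instance, take $q=2$, edges $x_1x_2,\,x_2x_3,\,x_3x_4,\,x_4x_5$, and $m_j=(x_1x_2)(x_3x_4)$, $m_l=(x_2x_3)(x_4x_5)$: then $w_j=x_1$ lies in the filtration colon but not in $(I^3:m_l)$). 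But your proposal stops exactly where the real work begins. You outline a plan to adjoin the overlap monomials $w_j$ one at a time via further short exact sequences, yet give no mechanism to bound the resulting colons $\reg(K:w_j)$ in terms of the global quantities, and you yourself flag this as ``the crux'' without resolving it. As written the argument is incomplete at its decisive step; the inequality $\reg(J_{l-1}:m_l)\le\max_j\reg(I^{q+1}:m_j)$ that you would need is not a formality and does not follow from the containment $(I^{q+1}:m_l)\subseteq(J_{l-1}:m_l)$ alone. To close the gap you must either establish that inequality through a structural analysis of how the $w_j$ sit inside the even-connection ideal described in \autoref{G'} and \autoref{newGraphBanerjee}, or consult Banerjee's original proof, where this passage is handled.
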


Here by, we recall  a result by Kalai and Meshulam on the regularity of monomial ideals.
\begin{theorem}\cite{Kalai_Gil_Meshulam_Roy}\label{Kalai-Meshulam}
	Let $I_1,\dots , I_s$ be monomial ideals in $R$, then
	$$ \reg{ \left( R \Big{/} \sum_{i=1}^{s} I_i \right)} \leq  \sum_{i=1}^{s} \reg (R/I_i).$$
\end{theorem}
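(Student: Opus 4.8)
The plan is to reduce to the case of two ideals and then convert the inequality into a subadditivity statement for Leray numbers of simplicial complexes. First, by regarding $I_2 + \cdots + I_s$ as a single monomial ideal and inducting on $s$, it suffices to prove
$$
\reg{\left(R/(I_1+I_2)\right)} \le \reg{(R/I_1)} + \reg{(R/I_2)}.
$$
Second, I would reduce to squarefree ideals by a simultaneous polarization. Let $a_i$ be the largest exponent of $x_i$ occurring among the generators of $I_1$ and $I_2$; this is also the largest exponent of $x_i$ in $I_1+I_2$. Polarizing all generators with respect to the common vector $(a_1,\dots,a_r)$ yields squarefree monomial ideals $I_1^{\mathrm{pol}}, I_2^{\mathrm{pol}}$ in a larger ring $S$ with $(I_1+I_2)^{\mathrm{pol}} = I_1^{\mathrm{pol}} + I_2^{\mathrm{pol}}$. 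Since the polarization variables form a regular sequence, regularity is unchanged, so it is enough to treat the squarefree case.

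Third, I would pass to Stanley--Reisner theory. Writing $I_j = I_{\Delta_j}$, a subset of the vertices is a non-face of $\Delta_1 \cap \Delta_2$ exactly when it is a non-face of $\Delta_1$ or of $\Delta_2$, so $I_1 + I_2 = I_{\Delta_1 \cap \Delta_2}$. By Hochster's formula, $\reg{(R/I_\Delta)}$ equals the \emph{Leray number} $L(\Delta)$, the least $d$ with $\tilde H_i(\Delta[W];K) = 0$ for every induced subcomplex $\Delta[W]$ and every $i \ge d$. The desired inequality then reads $L(\Delta_1 \cap \Delta_2) \le L(\Delta_1) + L(\Delta_2)$.

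The last step, which is the genuine obstacle, is this subadditivity of Leray numbers, and it requires more than a single Mayer--Vietoris sequence. Indeed, applied once — to the induced subcomplexes $\Delta_1[W] \cap \Delta_2[W]$, or algebraically to $0 \to R/(I_1 \cap I_2) \to R/I_1 \oplus R/I_2 \to R/(I_1+I_2) \to 0$ — it yields only the maximum-type bound $\reg{(R/(I_1+I_2))} \le \max\{\reg{(R/I_1)}, \reg{(R/I_2)}, \reg{(R/(I_1 \cap I_2))} - 1\}$, not an additive one. To obtain the sum I would argue by induction on the number of vertices, using the recursive description of Leray complexes: $\Delta$ is $d$-Leray precisely when $\tilde H_i(\Delta;K) = 0$ for all $i \ge d$ and, for every vertex $v$, the deletion $\Delta \setminus v$ is $d$-Leray and the link $\mathrm{lk}_\Delta(v)$ is $(d-1)$-Leray. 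Writing $d = L(\Delta_1) + L(\Delta_2)$, the identities $(\Delta_1 \cap \Delta_2) \setminus v = (\Delta_1 \setminus v) \cap (\Delta_2 \setminus v)$ and $\mathrm{lk}_{\Delta_1 \cap \Delta_2}(v) = \mathrm{lk}_{\Delta_1}(v) \cap \mathrm{lk}_{\Delta_2}(v)$ reduce the deletion and link conditions to complexes on fewer vertices, where the inductive hypothesis applies; crucially, passing to a link drops $L(\Delta_1)$ by one, so the link's Leray number is bounded by $d-1$ as required, which is what produces the additive gain.

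The one condition not immediately supplied by the inductive hypothesis is the global vanishing $\tilde H_i(\Delta_1 \cap \Delta_2;K) = 0$ for $i \ge d$. Here the Mayer--Vietoris sequence is fed the already-controlled homology of $\Delta_1$ and $\Delta_2$ together with that of the union $\Delta_1 \cup \Delta_2$ (whose Stanley--Reisner ideal is the intersection $I_1 \cap I_2$), the union term itself having to be controlled recursively. Organizing these homology vanishings compatibly over all induced subcomplexes is the heart of the matter; it is precisely the content of the Kalai--Meshulam theorem, and I would either complete this induction in detail or invoke the result directly.
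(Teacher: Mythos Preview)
The paper does not prove this statement at all: it is quoted as a result of Kalai and Meshulam, with a citation and no argument. So there is no ``paper's own proof'' to compare against; in effect your final concession --- that you would invoke the Kalai--Meshulam theorem directly --- is exactly what the authors do.

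That said, the sketch you give before that concession has a genuine error worth flagging. Your recursive characterization asserts that if $\Delta$ is $d$-Leray then $\mathrm{lk}_\Delta(v)$ is $(d-1)$-Leray for every vertex $v$, and you rely on this to get the ``additive gain''. This is false. Take $\Delta$ to be the path on four vertices with facets $\{1,2\},\{2,3\},\{3,4\}$. Every induced subcomplex is a forest, so $L(\Delta)=1$. But $\mathrm{lk}_\Delta(2)=\{1\}\cup\{3\}$ is two isolated points, which has $\tilde H_0\neq 0$ and hence Leray number $1$, not $0$. The long exact sequence of the pair $(\Delta[W\cup v],\Delta[W])$ only gives an injection $\tilde H_{i-1}(\mathrm{lk}_{\Delta[W\cup v]}(v))\hookrightarrow \tilde H_{i-1}(\Delta[W])$ for $i\ge d$, which does not force vanishing. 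So the inductive scheme as you stated it does not run; the actual Kalai--Meshulam argument controls the homology of intersections by a more careful combinatorial/spectral-sequence analysis rather than by this link-drops-one mechanism.

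Your reductions (induction on $s$, simultaneous polarization, the identifications $I_1+I_2=I_{\Delta_1\cap\Delta_2}$ and $\reg(R/I_\Delta)=L(\Delta)$) are all correct and are the standard way to recast the problem. The content is entirely in the Leray-number subadditivity, and for that you should simply cite the source, as the paper does.
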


The regularity of the edge ideal of a forest was first computed by Zheng in \cite[Theorem 2.18]{ZHENG_FORESTS}.

\begin{theorem}\cite[Theorem 2.18]{ZHENG_FORESTS}
	\label{reg_Forests}
	Let $G$ be a forest, then 
	$$
	\reg{I(G)} = \nu(G) + 1.
	$$
\end{theorem}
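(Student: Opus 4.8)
The plan is to prove the two inequalities $\reg{I(G)} \ge \nu(G)+1$ and $\reg{I(G)} \le \nu(G)+1$ separately; only the second will use that $G$ is a forest. For the lower bound, which holds for an arbitrary graph, let $\{e_1,\dots,e_\nu\}$ be an induced matching of maximum size $\nu = \nu(G)$ and set $W = \bigcup_i e_i$. By the definition of an induced matching, the induced subgraph $G[W]$ is a disjoint union of $\nu$ edges, so $I(G[W])$ is generated by a regular sequence of $\nu$ quadrics in pairwise disjoint sets of variables. Hence $\reg{I(G[W])} = \nu+1$ (the regularity of a complete intersection of $\nu$ quadrics, or equivalently additivity of regularity over connected components). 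Since $G[W]$ is an induced subgraph of $G$, part (i) of \autoref{short exact sequence} gives $\nu+1 = \reg{I(G[W])} \le \reg{I(G)}$.

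For the upper bound I would argue by induction on the number of vertices of $G$, the base case being an edgeless graph; here it is cleanest to phrase everything through the identity $\reg{R/I(G)} = \reg{I(G)} - 1$, so that the empty edge ideal contributes regularity $0 = \nu(G)$. For the inductive step, I use that a forest containing at least one edge has a leaf: choose a leaf $x$ and let $y$ be its unique neighbor. Applying part (ii) of \autoref{short exact sequence} at the vertex $y$ yields
\[
\reg{I(G)} \le \max\bigl\{\, \reg{I(G\setminus y)},\ \reg{I(G\setminus N[y])}+1 \,\bigr\}.
\]
Both $G\setminus y$ and $G\setminus N[y]$ are again forests on strictly fewer vertices, so the inductive hypothesis applies to each.

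It then remains to control the two induced matching numbers on the right. The first term is handled by monotonicity: $G\setminus y$ is an induced subgraph of $G$, and any induced matching of an induced subgraph is an induced matching of the ambient graph, so $\nu(G\setminus y) \le \nu(G)$ and thus $\reg{I(G\setminus y)} \le \nu(G\setminus y)+1 \le \nu(G)+1$. The crucial point is a \emph{strict} drop for the second term: I claim $\nu(G\setminus N[y]) \le \nu(G)-1$. Indeed, if $M'$ is a maximum induced matching of $G\setminus N[y]$, then none of its vertices lies in $N[y]$; since the leaf $x$ has $y$ as its only neighbor, there is no edge of $G$ joining $\{x,y\}$ to a vertex covered by $M'$, so $M' \cup \{\{x,y\}\}$ is an induced matching of $G$. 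Hence $\nu(G) \ge \nu(G\setminus N[y]) + 1$, and the inductive hypothesis gives $\reg{I(G\setminus N[y])}+1 \le \nu(G\setminus N[y]) + 2 \le \nu(G)+1$. Combining the two estimates with the displayed inequality closes the induction.

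The main obstacle, and the only place where acyclicity is genuinely used, is the strict inequality $\nu(G\setminus N[y]) < \nu(G)$. It rests entirely on the existence of a leaf, whose pendant edge can always be appended to a matching avoiding $N[y]$; for graphs with cycles no such vertex need exist, which is precisely why the clean equality $\reg{I(G)} = \nu(G)+1$ can fail beyond forests and must be replaced by the more delicate bounds of the later sections. A secondary, purely bookkeeping, issue is the degenerate edgeless base case, which is why I would run the whole argument through $\reg{R/I(G)}$.
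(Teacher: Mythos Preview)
Your argument is correct. The paper itself does not supply a proof of this statement---it is quoted as a known result of Zheng---so there is no proof in the paper to compare against. That said, your inductive leaf-deletion approach is the standard one; the paper records your key inequality $\nu(G\setminus N[y])+1 \le \nu(G)$ (with $y$ the unique neighbor of a leaf) as \autoref{leaf_delete_decrease_nu} and deploys exactly this deletion technique in the proof of \autoref{general_prop_bicyclic}.
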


In \cite{Katzman_Mordechai} Katzman first noticed that the previous equality is a lower bound for general graphs.

\begin{theorem}\cite[Corollary 1.2]{Katzman_Mordechai}
	\label{lower_bound_Katzman}
	Let $G$ be a graph, then 
	$$
	\reg{I(G)} \ge \nu(G) + 1.
	$$
\end{theorem}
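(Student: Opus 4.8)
The plan is to produce a single, fully transparent induced subgraph of $G$ whose regularity equals $\nu(G)+1$, and then to pass the inequality up to $G$ by monotonicity under induced subgraphs. First I would fix a maximum induced matching $C=\{e_1,\dots,e_t\}$ of $G$, so that $t=\nu(G)$, and set $W=\bigcup_{i=1}^{t}e_i$ to be the collection of the $2t$ endpoints. By the definition of an induced matching, the induced subgraph $H:=G[W]$ has edge set exactly $\{e_1,\dots,e_t\}$; that is, $H$ is a disjoint union of $t$ edges on $2t$ distinct vertices.

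The second step is to compute $\reg{I(H)}$ by hand. Viewing the $2t$ endpoints as variables, we have $I(H)=(e_1,\dots,e_t)$, a sequence of squarefree quadratic monomials; since the $e_i$ are pairwise vertex-disjoint, these monomials involve pairwise disjoint variables and hence form a regular sequence of forms of degree $2$. Therefore the minimal free resolution of $R/I(H)$ is the Koszul complex on $e_1,\dots,e_t$, so the only nonzero graded Betti numbers are $\beta_{i,2i}(R/I(H))=\binom{t}{i}$ for $0\le i\le t$. Consequently $\reg(R/I(H))=\max_{0\le i\le t}(2i-i)=t$, and using the identity $\reg(R/I)=\reg(I)-1$ we obtain $\reg{I(H)}=t+1=\nu(G)+1$.

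Finally, since $H=G[W]$ is an induced subgraph of $G$, part (i) of \autoref{short exact sequence} yields $\reg{I(H)}\le\reg{I(G)}$. Combining this with the computation of the previous step gives $\reg{I(G)}\ge\reg{I(H)}=\nu(G)+1$, which is the claim.

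As for where the care is needed: the argument is essentially a two-line reduction, so there is no genuine obstacle, but the crucial point is the identification $H=G[W]=\bigsqcup_{i=1}^{t}e_i$. This relies on $C$ being an \emph{induced} matching rather than merely a maximal one; for an arbitrary matching the induced subgraph on $W$ could acquire extra edges, which would destroy both the regular-sequence structure and the resulting equality $\reg{I(H)}=\nu(G)+1$. The regularity of a complete intersection of quadrics is classical and could instead be deduced from the behaviour of regularity under disjoint unions of graphs, but the Koszul computation is the most self-contained route.
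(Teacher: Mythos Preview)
Your argument is correct. The paper does not supply its own proof of this statement; it simply records it as a known result from Katzman's work and cites \cite[Corollary~1.2]{Katzman_Mordechai}. What you have written is the standard proof: realize a maximum induced matching as an induced subgraph that is a disjoint union of $\nu(G)$ edges, compute its regularity directly (via the Koszul complex on a regular sequence of quadrics, or equivalently via additivity of regularity over disjoint unions), and then invoke monotonicity under induced subgraphs, which is exactly \autoref{short exact sequence}(i) in the present paper. Your remark that the argument fails for an ordinary matching and genuinely needs the \emph{induced} hypothesis is the right point to flag.
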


The decycling number of a graph is an important combinatorial invariant which can be used to obtain an upper bound for the regularity of the edge ideal of a graph.

\begin{dfn}
	For a graph $G$ and $D \subset V(G)$, if $G \setminus D$ is acyclic, i.e. contains no induced cycle, then $D$ is said to be a decycling set of $G$. 
	The size of a smallest decycling set of $G$ is called the decycling number of $G$ and denoted by $\nabla(G)$.
\end{dfn}

\begin{theorem}
	\label{Upper_bound_reg_decyc}
	\cite[Theorem 4.11]{LOZIN_TRANS}
	Let $G$ be a graph, then 
	$$
	\reg{I(G)} \le \nu(G)  + \nabla(G) + 1.
	$$
\end{theorem}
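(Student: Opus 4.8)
The plan is to induct on the decycling number $\nabla(G)$, peeling off the vertices of a minimum decycling set one at a time with the vertex-deletion recursion of \autoref{short exact sequence}(ii), and invoking Zheng's formula (\autoref{reg_Forests}) once the graph has become acyclic. For the base case, suppose $\nabla(G) = 0$. Then $G$ is a forest, so by \autoref{reg_Forests} we have $\reg{I(G)} = \nu(G) + 1 = \nu(G) + \nabla(G) + 1$, as required.

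For the inductive step, assume $\nabla(G) = k \ge 1$ and that the asserted bound holds for every graph with decycling number at most $k-1$. Fix a decycling set $D$ with $|D| = k$ and choose any vertex $x \in D$. I would first record two monotonicity facts. Since both $G \setminus x$ and $G \setminus N[x]$ are induced subgraphs of $G$, every induced matching of either one is an induced matching of $G$, whence
$$\nu(G \setminus x) \le \nu(G) \quad\text{and}\quad \nu(G \setminus N[x]) \le \nu(G).$$
For the decycling numbers, note that $(G \setminus x) \setminus (D \setminus \{x\}) = G \setminus D$ is acyclic, so $D \setminus \{x\}$ is a decycling set of $G \setminus x$ and $\nabla(G \setminus x) \le k - 1$. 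Similarly, $D \setminus N[x]$ decycles $G \setminus N[x]$, because $(G \setminus N[x]) \setminus (D \setminus N[x])$ is an induced subgraph of the acyclic graph $G \setminus D$; since $x \in D \cap N[x]$ we obtain $\nabla(G \setminus N[x]) \le |D| - 1 = k - 1$.

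Now I would apply \autoref{short exact sequence}(ii) to the vertex $x$ and bound each term of the resulting maximum by the induction hypothesis. For the first term,
$$\reg{I(G \setminus x)} \le \nu(G \setminus x) + \nabla(G \setminus x) + 1 \le \nu(G) + (k-1) + 1 = \nu(G) + \nabla(G),$$
and for the second term,
$$\reg{I(G \setminus N[x])} + 1 \le \nu(G \setminus N[x]) + \nabla(G \setminus N[x]) + 1 + 1 \le \nu(G) + \nabla(G) + 1.$$
Taking the maximum yields $\reg{I(G)} \le \nu(G) + \nabla(G) + 1$, completing the induction.

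This is essentially a bookkeeping induction, so there is no deep obstacle; the one point to check with care is that \emph{both} branches genuinely lower the decycling number by at least one, so that the induction hypothesis applies to $G\setminus x$ and to $G\setminus N[x]$ alike. The tight branch is the $G \setminus N[x]$ term, where the $+1$ in the recursion of \autoref{short exact sequence}(ii) is precisely what converts the unit decrease of $\nabla$ into the claimed additive constant, while the $G \setminus x$ branch contributes the strictly smaller bound $\nu(G) + \nabla(G)$.
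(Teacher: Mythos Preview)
Your argument is correct. The induction on $\nabla(G)$ goes through cleanly: the key observation that removing any vertex of a minimum decycling set drops $\nabla$ by at least one in \emph{both} branches of the recursion is exactly what makes the bookkeeping close, and you have verified it carefully for $G\setminus N[x]$ as well as for $G\setminus x$.

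Note, however, that the paper does not supply its own proof of this statement: it is quoted directly as \cite[Theorem~4.11]{LOZIN_TRANS}, so there is no in-paper argument to compare against. Your proof is in fact the standard one (and essentially the one given in the cited reference): induct on the size of a decycling set, use the vertex-deletion inequality of \autoref{short exact sequence}(ii), and close the base case with Zheng's formula for forests. So while there is nothing to contrast methodologically, your write-up supplies a complete and self-contained justification where the present paper simply appeals to an external source.
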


In \cite{TAI_FOREST_CYCLE} Beyarslan, H\`{a} and Trung provided a formula for the regularity of the powers of edge ideals of  forests and cycles in terms of the induced matching number.

\begin{theorem}\cite[Theorem 4.7]{TAI_FOREST_CYCLE}\label{TAI_REG_OF_FOREST_CYCLE}
		Let $G$ be a forest, then
		$$
		\reg{I(G)^q} = 2q + \nu(G) -1.
		$$
		for all $q\ge 1$.
\end{theorem}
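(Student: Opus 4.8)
The plan is to prove the two inequalities $\reg{I(G)^q} \ge 2q + \nu(G) - 1$ and $\reg{I(G)^q} \le 2q + \nu(G) - 1$ separately; the first is a general fact about arbitrary graphs, while the forest hypothesis enters only in the second. For the lower bound I would fix an induced matching $\{x_1y_1,\dots,x_sy_s\}$ of maximum size $s=\nu(G)$ and let $H = G\big[\bigcup_i\{x_i,y_i\}\big]$ be the induced subgraph it spans, which is a disjoint union of $s$ edges. Since these edges lie in pairwise disjoint sets of variables, $I(H)$ is a complete intersection of $s$ quadrics, and a direct computation (tensoring the resolutions of the single edges, or the complete-intersection formula $\reg{I^q}=qd+(s-1)(d-1)$ with $d=2$) gives $\reg{I(H)^q}=2q+s-1$. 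Because $H$ is an induced subgraph, every minimal generator of $I(H)^q$, and indeed every multidegree in its minimal free resolution, is supported on $V(H)$; by the restriction lemma for multigraded Betti numbers of monomial ideals these Betti numbers agree with the corresponding ones of $I(G)^q$, so $\reg{I(G)^q}\ge \reg{I(H)^q}=2q+\nu(G)-1$. (For $q=1$ this specializes to \autoref{lower_bound_Katzman}.)

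For the upper bound I would induct on $q$. The base case $q=1$ is exactly Zheng's theorem \autoref{reg_Forests}, which gives $\reg{I(G)}=\nu(G)+1=2\cdot 1+\nu(G)-1$. For the inductive step, assume $\reg{I(G)^q}=2q+\nu(G)-1$ and apply Banerjee's bound \autoref{banerjee}, which states
\[
\reg{I(G)^{q+1}} \le \max\Big\{\,\reg{(I(G)^{q+1}\colon M)}+2q,\ \reg{I(G)^q}\,\Big\},
\]
where $M$ ranges over the minimal generators of $I(G)^q$. The term $\reg{I(G)^q}=2q+\nu(G)-1$ already lies below the target $2(q+1)+\nu(G)-1=2q+\nu(G)+1$, so it suffices to prove $\reg{(I(G)^{q+1}\colon M)}\le \nu(G)+1$ for every $M=e_1\cdots e_q$. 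By \autoref{G'} the colon ideal $(I(G)^{q+1}\colon M)$ is generated in degree two, and by \autoref{newGraphBanerjee} its polarization is the edge ideal of the even-connection graph $G'$ obtained from $G$ by joining each pair of distinct even-connected vertices by an edge and attaching a whisker at each self-even-connected vertex. Since polarization preserves regularity, the entire inductive step reduces to the single structural estimate $\reg{I(G')}\le \nu(G)+1$, uniformly in $q$ and $M$.

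This structural estimate is where the forest hypothesis is essential, and it is the main obstacle. The plan is to prove it by induction on the number of vertices of the forest $G$, exploiting the unique-path property: in a forest two vertices can be even-connected with respect to $M$ only if the unique path joining them has odd length and its alternate edges all lie among $e_1,\dots,e_q$, which tightly constrains the edges of $G'$. Concretely, I would choose a leaf $x$ of $G$ with neighbor $y$ and apply the deletion bound \autoref{short exact sequence}(ii) at the vertex $y$,
\[
\reg{I(G')} \le \max\big\{\,\reg{I(G'\setminus y)},\ \reg{I(G'\setminus N_{G'}[y])}+1\,\big\},
\]
then identify $G'\setminus y$ and $G'\setminus N_{G'}[y]$ with (induced subgraphs of) the even-connection graphs of the smaller forests $G\setminus y$ and $G\setminus N_G[y]$ relative to the appropriate subproducts of $M$. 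The two $\nu$-inequalities needed are $\nu(G\setminus y)\le \nu(G)$, which holds since $G\setminus y$ is an induced subgraph, and $\nu(G\setminus N_G[y])\le \nu(G)-1$, which holds because any induced matching of $G\setminus N_G[y]$ together with the leaf edge $xy$ is induced in $G$. Feeding these into the inductive hypothesis yields $\reg{I(G'\setminus y)}\le \nu(G)+1$ and $\reg{I(G'\setminus N_{G'}[y])}+1\le \nu(G)+1$, closing the induction and hence, with the lower bound, giving $\reg{I(G)^q}=2q+\nu(G)-1$ for all $q\ge 1$.

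The delicate point, and the reason the argument is genuinely combinatorial rather than formal, is the bookkeeping that matches the even-connection graph of $G$ after deleting $y$ with the even-connection graph of the smaller forest: an even-connection in $G$ whose witnessing path runs \emph{through} $y$ produces an edge of $G'$ that survives in $G'\setminus y$ but need not arise from any even-connection inside $G\setminus y$. I expect to control this precisely by using the unique-path property to show that, after deleting the neighbor of a leaf, no such extraneous edges occur (or that the ones that do are already present as ordinary edges), and by tracking how the whiskers attached at self-even-connected vertices interact with $x$ and $y$. Carrying out this case analysis cleanly is the crux of the proof; once it is in place, the reduction through \autoref{banerjee}, \autoref{G'}, and \autoref{newGraphBanerjee} makes the rest routine.
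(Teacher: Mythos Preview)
This theorem is quoted from \cite{TAI_FOREST_CYCLE} and not proved in the present paper, so there is no in-paper argument to compare against. Your overall architecture is correct and matches that reference: the lower bound via an induced matching is \autoref{TAI_REG_LOWER_BOUND}, and the upper bound proceeds by induction on $q$ through \autoref{banerjee}, reducing to the structural estimate $\reg I(G')\le \nu(G)+1$ for every even-connection graph $G'$ of the forest $G$.

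The gap is in your induction on $|V(G)|$ for that estimate. Your expectation that ``after deleting the neighbor of a leaf, no such extraneous edges occur (or the ones that do are already present as ordinary edges)'' is false. Take $G=P_5$ with vertices $v_1,\dots,v_5$, leaf $x=v_1$, neighbor $y=v_2$, and $M=(v_2v_3)(v_3v_4)$. Then $v_1$ is even-connected to $v_4$ via the walk $v_1,v_2,v_3,v_4$, so $v_1v_4\in E(G')$; this edge survives in $G'\setminus y$, yet in $G\setminus y$ the vertex $v_1$ is isolated, so no even-connection graph of $G\setminus y$ (with respect to any subproduct of $M$) contains an edge at $v_1$. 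Hence $G'\setminus y$ is not an induced subgraph of any such graph, and the inductive identification collapses. The actual argument in \cite{TAI_FOREST_CYCLE} requires a finer case analysis --- in particular on whether the leaf edge itself lies among the $e_i$, exploiting that when it does not, no even-connection walk can pass through the degree-one vertex internally --- so your plan has the right skeleton, but the specific move of deleting $y$ and hoping the extraneous edges vanish does not go through.
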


\begin{theorem}\label{regularity Cn}\cite[Theorem  5.2]{TAI_FOREST_CYCLE}. Let   $C_n$ be a cycle with $ n $ vertices, then 
	$$
	\reg I(C_n)= \begin{cases}
	\nu(C_n) +1 \qquad \text{ if } n \equiv 0, 1 \;(\Mod 3),\\
	\nu(C_n) +2 \qquad \text{ if } n \equiv 2 \;(\Mod 3),
	\end{cases}
	$$ 
	where $\nu(C_n) = \lfloor \dfrac{n}{3}\rfloor $ denote the induced matching number of $\cn$.
	Moreover,
	$$
	\reg I(C_n)^q = 2q+\nu(C_n) -1.
	$$
	and for all $q\ge 2$.
\end{theorem}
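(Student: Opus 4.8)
The plan is to handle the single-power statement and the powers statement separately, in each case sandwiching the regularity between a combinatorial lower bound and a recursive upper bound. For $\reg{I(C_n)}$ I would first record the two bounds valid for every $n$: the lower bound $\reg{I(C_n)}\ge \nu(C_n)+1$ is immediate from \autoref{lower_bound_Katzman}, while deleting one vertex of $\cn$ produces the acyclic path $P_{n-1}$, so $\nabla(C_n)=1$ and \autoref{Upper_bound_reg_decyc} gives $\reg{I(C_n)}\le \nu(C_n)+2$. To decide which value occurs when $n\equiv 0,1 \;(\Mod 3)$, I apply \autoref{short exact sequence}(ii) at a vertex $x$, giving $\reg{I(C_n)}\le \max\{\reg{I(C_n\setminus x)},\,\reg{I(C_n\setminus N[x])}+1\}$. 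Since $C_n\setminus x=P_{n-1}$ and $C_n\setminus N[x]=P_{n-3}$ are forests, \autoref{reg_Forests} and \autoref{nu_path} give $\reg{I(P_{n-1})}=\lfloor\tfrac{n}{3}\rfloor+1$ and $\reg{I(P_{n-3})}+1=\lfloor\tfrac{n-2}{3}\rfloor+2$; a check of the residues shows both equal $\nu(C_n)+1$ precisely when $n\equiv 0,1\;(\Mod 3)$, so there $\reg{I(C_n)}=\nu(C_n)+1$.

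The delicate point for the first formula is the lower bound $\reg{I(C_n)}\ge \nu(C_n)+2$ when $n\equiv 2\;(\Mod 3)$. Writing $n=3k+2$, the vertex-splitting inequality above lands exactly on the boundary case $\reg{I(C_n\setminus N[x])}+1=\reg{I(C_n\setminus x)}+1$, so it cannot by itself force the larger value. Instead I would exhibit a nonvanishing graded Betti number directly: via Hochster's formula the Betti numbers of $R/I(C_n)$ are governed by the reduced homology of independence complexes of induced subgraphs, and it is classical (Kozlov) that $\mathrm{Ind}(C_{3k+2})\simeq S^{k}$, so $\tilde H_{k}(\mathrm{Ind}(C_n);K)\neq 0$. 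Taking the full vertex set, this produces a nonzero $\beta_{i,j}(R/I(C_n))$ with $j-i=k+1$, hence $\reg{I(C_n)}\ge (k+1)+1=\nu(C_n)+2$. Together with the decycling upper bound this settles the $n\equiv 2$ case and completes the first formula.

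For the powers I again sandwich. The lower bound $\reg{I(C_n)^q}\ge 2q+\nu(C_n)-1$ follows by restricting to an induced matching: a maximum induced matching is an induced subgraph that is a disjoint union of $\nu(C_n)$ edges, hence a forest, whose powers have regularity $2q+\nu(C_n)-1$ by \autoref{TAI_REG_OF_FOREST_CYCLE}, and the regularity of powers does not decrease on passing from an induced subgraph to the ambient graph. The matching upper bound for $q\ge 2$ I would prove by induction on $q$ via \autoref{banerjee}, which reduces everything to bounding $\reg{(I(C_n)^{q+1}\colon M)}$ over minimal generators $M$ of $I(C_n)^q$. By \autoref{G'} and \autoref{newGraphBanerjee} this colon ideal is, after polarization, the edge ideal of a graph $G'$ obtained from $\cn$ by adjoining the even-connection chords determined by $M$ together with whiskers at the vertices even-connected to themselves. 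The crucial claim is $\reg{I(G')}\le \nu(C_n)+1$ for every such $M$. Granting it, \autoref{banerjee} yields $\reg{I(C_n)^{q+1}}\le \max\{\nu(C_n)+1+2q,\ \reg{I(C_n)^q}\}$; for $q=1$ this already gives $\reg{I(C_n)^2}\le \nu(C_n)+3=2\cdot 2+\nu(C_n)-1$ (using $\reg{I(C_n)}\le\nu(C_n)+2$), and the inductive step propagates $\reg{I(C_n)^{q+1}}\le 2(q+1)+\nu(C_n)-1$, matching the lower bound.

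The main obstacle is exactly this claim about $G'$. I would attack it through a structural analysis of the even-connection graph: the even-connected chords of a cycle join vertices at prescribed parities of distance along $\cn$, and self-even-connection attaches whiskers. I then bound $\reg{I(G')}$ either by using the pendant whisker vertices to split $G'$ down to forests through repeated application of \autoref{short exact sequence}(ii), or by verifying the inequality $\nu(G')+\nabla(G')\le \nu(C_n)$ and invoking \autoref{Upper_bound_reg_decyc}. Checking that no admissible $M$ can push $\reg{I(G')}$ above $\nu(C_n)+1$, uniformly in $q$, is the step requiring the most care.
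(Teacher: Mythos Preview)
This theorem is quoted in the paper as a preliminary result from \cite{TAI_FOREST_CYCLE}; the paper gives no proof of its own, so there is nothing here to compare against directly. Your proposal is effectively a reconstruction of the original argument.

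Your treatment of the first formula is correct and complete: the sandwich between \autoref{lower_bound_Katzman} and \autoref{Upper_bound_reg_decyc}, the vertex-splitting for $n\equiv 0,1\;(\Mod 3)$, and the Hochster/Kozlov computation of $\tilde H_k(\mathrm{Ind}(C_{3k+2}))$ for the extra unit when $n\equiv 2\;(\Mod 3)$ all work as stated.

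For the powers, the architecture via \autoref{banerjee} is the right one, and you correctly isolate the key claim $\reg{I(G')}\le \nu(C_n)+1$. However, both mechanisms you offer for this claim break already on the smallest nontrivial example. Take $n=5$, $q=1$, $M=x_1x_2$: the only even-connection edge added is the chord $x_3x_5$, so $G'$ is $C_5$ together with this chord and has no whiskers, eliminating your first approach. For the second, one checks $\nu(G')=1$ and $\nabla(G')=1$, so $\nu(G')+\nabla(G')=2>1=\nu(C_5)$, and \autoref{Upper_bound_reg_decyc} yields only $\reg{I(G')}\le 3$, which is not sharp enough. (The correct value is $\reg{I(G')}=2$: the complement of $G'$ is the path on $x_3,x_1,x_4,x_2,x_5$ in that order, hence chordal, so $I(G')$ has a linear resolution by Fr\"oberg's theorem.) What is actually needed is a finer structural statement about $G'$---for instance, that the added chords always produce a vertex or edge whose closed neighborhood dominates enough of $G'$ to push the recursion in \autoref{short exact sequence} down to $\nu(C_n)+1$. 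Your sketch does not supply this, so the upper bound for $q\ge 2$ remains a genuine gap.
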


In addition, the authors of \cite{TAI_FOREST_CYCLE} provided a lower bound for the regularity of the powers of the edge ideal of an arbitrary graph, and an upper bound for the regularity of the edge ideal of a graph containing a Hamiltonian path.

\begin{theorem}\cite[Theorem 4.5]{TAI_FOREST_CYCLE}\label{TAI_REG_LOWER_BOUND}
	Let $ G $ be a graph and let $ \nu(G) $ denote its induced matching number. Then, for all $ q\geq1 $, we have
	$$ \reg{ \edge{G}^q}\geq2q +\nu(G)-1 $$
\end{theorem}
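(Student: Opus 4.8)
The plan is to reduce the general lower bound to the case of a disjoint union of edges, for which the bound is already known, by showing that passing to an induced subgraph can only decrease the regularity of the powers. Concretely, let $C=\{e_1,\dots,e_\nu\}$ be an induced matching of $G$ of maximum size $\nu=\nu(G)$, with $e_i=\{a_i,b_i\}$, and let $W=\bigcup_{i=1}^\nu e_i$ be the set of $2\nu$ vertices it covers. Let $H=G[W]$ be the induced subgraph on $W$; by the definition of an induced matching, $H$ is exactly the disjoint union of the $\nu$ edges $e_1,\dots,e_\nu$, so $H$ is a forest with $\nu(H)=\nu$. The goal is then the two-step chain
$$
\reg{\edge{G}^q}\ \ge\ \reg{\edge{H}^q}\ =\ 2q+\nu-1 ,
$$
where the inequality is monotonicity under induced subgraphs and the equality is the base case.

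For the base case, $H$ is a forest, so \autoref{TAI_REG_OF_FOREST_CYCLE} applies and gives $\reg{\edge{H}^q}=2q+\nu(H)-1=2q+\nu-1$ directly. (Alternatively, $\edge{H}=(a_1b_1,\dots,a_\nu b_\nu)$ is a complete intersection of quadrics in pairwise disjoint variables, and the inequality $\reg{\edge{H}^q}\ge 2q+\nu-1$ can be obtained directly from its complete-intersection structure, so the argument does not essentially rely on the forest formula.)

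The core of the proof is the monotonicity inequality $\reg{\edge{H}^q}\le\reg{\edge{G}^q}$. Write $S=K[x_v\mid v\in V(G)]$ and $S_W=K[x_v\mid v\in W]$, and let $\pi\colon S\to S_W$ be the retraction sending every variable $x_v$ with $v\notin W$ to $0$, so that $S_W=S/(x_v\mid v\notin W)$ and, for every monomial ideal $J$, the image $\pi(J)$ equals $J\cap S_W$ (the ideal of $S_W$ generated by the monomials of $J$ that lie in $S_W$). First I would establish the combinatorial identity $\edge{G}^q\cap S_W=\edge{H}^q$: a generator $e_{i_1}\cdots e_{i_q}$ of $\edge{G}^q$ lies in $S_W$ if and only if every edge $e_{i_j}$ has both endpoints in $W$, and since $H$ is an \emph{induced} subgraph this occurs if and only if every $e_{i_j}$ is an edge of $H$, i.e. the generator already belongs to $\edge{H}^q$. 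Second, I would invoke the homological fact that setting a variable to zero cannot increase the regularity of a monomial ideal; iterating this over the variables $x_v$ with $v\notin W$ gives $\reg{(\edge{G}^q\cap S_W)}\le\reg{\edge{G}^q}$, and combined with the identity this is precisely $\reg{\edge{H}^q}\le\reg{\edge{G}^q}$.

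The step I expect to be the main obstacle is this last homological fact, namely $\reg{\pi(J)}\le\reg{J}$ for the elimination of a single variable $x_u$. The cleanest way I see to justify it is to pass to the polarization $J^{\mathrm{pol}}$, which is a squarefree monomial ideal with the same regularity, and to observe that killing $x_u$ corresponds to deleting the polarized copies of $x_u$, that is, to restricting the associated simplicial complex to a vertex-subset. By Hochster's formula the graded Betti numbers of the Stanley--Reisner ideal of such a restriction are termwise bounded by those of the original, since the relevant reduced homology is computed over a subfamily of faces; hence the regularity cannot increase. Once this lemma is in place, the three ingredients --- maximality of the induced matching, the contraction identity, and the base case --- combine to yield $\reg{\edge{G}^q}\ge 2q+\nu(G)-1$ for all $q\ge 1$, as claimed.
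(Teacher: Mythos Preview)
The paper does not prove this statement; it is quoted in the preliminaries as \cite[Theorem 4.5]{TAI_FOREST_CYCLE} with no argument given, so there is no in-paper proof to compare against. Your proposal is correct and is essentially the argument used in the cited source: restrict to the induced subgraph $H$ supported on a maximum induced matching (a disjoint union of $\nu$ edges), invoke monotonicity of $\reg I(\cdot)^q$ under passage to induced subgraphs (this is \cite[Corollary~4.3]{TAI_FOREST_CYCLE}), and compute $\reg I(H)^q$ explicitly.

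Two remarks. First, your appeal to \autoref{TAI_REG_OF_FOREST_CYCLE} for the base case is circular in the original development, since Theorem~4.7 there uses Theorem~4.5 for its lower bound; you anticipated this, and your alternative via the complete-intersection structure of $I(H)=(a_1b_1,\dots,a_\nu b_\nu)$ is the right fix and is how the original argument proceeds. Second, the monotonicity step does not actually require the polarization/Hochster detour you sketch: since $I(G)^q$ is a monomial ideal, every multigraded Betti number of $I(G)^q\cap S_W$ over $S_W$ already appears among those of $I(G)^q$ over $S$ (the Taylor/LCM machinery, or simply the upper-Koszul simplicial complexes, restrict verbatim), so the regularity inequality is immediate. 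Your polarization route is valid but heavier than necessary.
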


\begin{theorem}\label{Hamiltonian}\cite[Theorem  3.1]{TAI_FOREST_CYCLE}
Let $G$ be a graph on $n$ vertices. Assume $G$ contains a Hamiltonian path, then
$$
\reg I(G)\leq \lfloor \dfrac{n+1}{3}\rfloor+1
$$
\end{theorem}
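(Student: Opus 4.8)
The plan is to prove the equivalent statement that $\reg{I(G)}$ never exceeds the regularity of a Hamiltonian path on the same vertex set. Indeed, by \autoref{reg_Forests} and \autoref{nu_path} we have $\reg{I(P_n)} = \nu(P_n)+1 = \lfloor (n+1)/3\rfloor + 1$, so the inequality to be proved is exactly $\reg{I(G)} \le \reg{I(P_n)}$: adding chords to a Hamiltonian path cannot raise the regularity above that of the path itself. I would argue by induction on $n$, with a secondary induction on the number of edges, using the edge-splitting recursion \autoref{short exact sequence}(iii). Fix a Hamiltonian path $x_1,x_2,\dots,x_n$ and apply the recursion to its first edge $e=\{x_1,x_2\}$, giving
$$
\reg{I(G)} \le \max\{2,\ \reg{I(G\setminus e)},\ \reg{I(G_e)}+1\}.
$$

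Two of the three terms are immediate. The constant $2$ is harmless whenever $n\ge 2$, since the target $\lfloor(n+1)/3\rfloor+1$ is then at least $2$. For the term $\reg{I(G_e)}+1$, observe that $N_G[e]\supseteq\{x_1,x_2,x_3\}$, because $x_3$ is the path-neighbour of $x_2$; hence the vertex set $V\setminus N_G[e]$ is contained in $\{x_4,\dots,x_n\}$, so $G_e$ is an induced subgraph of $G[\{x_4,\dots,x_n\}]$, which carries the Hamiltonian path $x_4\cdots x_n$ on $n-3$ vertices. By the induced-subgraph monotonicity \autoref{short exact sequence}(i) and the induction hypothesis, $\reg{I(G_e)}\le \lfloor(n-2)/3\rfloor+1$, whence $\reg{I(G_e)}+1 \le \lfloor(n-2)/3\rfloor+2 = \lfloor(n+1)/3\rfloor+1$, exactly the target. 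This is the branch where the floor identity $\lfloor(n-2)/3\rfloor+1=\lfloor(n+1)/3\rfloor$ makes the induction close, with the single ``$+1$'' paid for passing from $n-3$ back to $n$.

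The remaining term $\reg{I(G\setminus e)}$ is the crux, and is where the main obstacle lies: $G\setminus e$ has the same vertex set as $G$ and in general no longer has a Hamiltonian path, so the induction hypothesis does not apply to it directly. When $\deg_G(x_1)=1$ the difficulty evaporates, for then $x_1$ becomes isolated in $G\setminus e$, an isolated vertex does not change the regularity, and $\reg{I(G\setminus e)} = \reg{I(G[\{x_2,\dots,x_n\}])} \le \lfloor n/3\rfloor+1$ by the induction on $n$. The genuinely delicate situation is $\deg_G(x_1)\ge 2$, where the chords at $x_1$ persist in $G\setminus e$. To treat it uniformly I would strengthen the inductive statement from ``graphs with a Hamiltonian path'' to ``graphs $H$ equipped with a spanning linear forest $L$ (a disjoint union of paths) with component sizes $n_1,\dots,n_r$, for which $\reg{I(H)} \le \sum_i \lfloor(n_i+1)/3\rfloor + 1$.'' The Hamiltonian-path assertion is the single-component instance, and $G\setminus e$ inherits the spanning linear forest $L\setminus e$, splitting one path into two; its size bound does not exceed the original one because $\lfloor a/3\rfloor + \lfloor b/3\rfloor \le \lfloor (a+b)/3\rfloor$, and since $G\setminus e$ has strictly fewer edges the secondary induction applies.

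The hard part will be to make this strengthened statement both true and stable under the recursion. A spanning linear forest that is too sparse already fails the bound (a single edge with $L$ taken edgeless would violate it), so one must insist on a \emph{maximal} linear forest, and then verify that maximality — or a suitable substitute invariant recording the component-size profile — is preserved on passing to $G\setminus e$ and $G_e$, so that surviving chords are always absorbed into, or dominated by, a spanning linear forest whose profile keeps $\sum_i\lfloor(n_i+1)/3\rfloor+1$ at or below $\lfloor(n+1)/3\rfloor+1$. I expect essentially all of the technical work to sit in this combinatorial bookkeeping; once that stability is in hand, the three-term recursion closes the induction and yields the stated bound.
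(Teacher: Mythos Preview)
The paper does not prove this theorem at all: it is quoted in the preliminaries with a citation to \cite[Theorem~3.1]{TAI_FOREST_CYCLE} and used as a black box. So there is no ``paper's own proof'' to compare your attempt against.

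On the merits of your sketch: the handling of the constant term and of $\reg I(G_e)+1$ is correct and clean, and your observation that $G_e$ sits inside $G[\{x_4,\dots,x_n\}]$ (which carries a Hamiltonian path on $n-3$ vertices) is exactly the right move there. The genuine gap is the one you yourself flag, the term $\reg I(G\setminus e)$ when $\deg_G(x_1)\ge 2$, and your proposed repair does not close it. Two concrete issues. First, the inequality you invoke goes the wrong way for your purposes: after splitting a path of size $n_j$ into pieces of sizes $p,q$, you need $\lfloor(p+1)/3\rfloor+\lfloor(q+1)/3\rfloor\le\lfloor(n_j+1)/3\rfloor$, but for $p=q=2$, $n_j=4$ the left side is $2$ and the right side is $1$. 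Your cited inequality $\lfloor a/3\rfloor+\lfloor b/3\rfloor\le\lfloor(a+b)/3\rfloor$ only gives an upper bound of $\lfloor(n_j+2)/3\rfloor$, which can exceed $\lfloor(n_j+1)/3\rfloor$. Second, even if one restricts to maximal spanning linear forests to avoid the degenerate counterexamples you mention, maximality is not preserved under deleting the chosen path edge (the isolated $x_1$ can be re-absorbed along a chord), and you give no argument that re-maximizing keeps the profile bound under control. So the proposal remains a sketch with its hardest step unresolved.

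The standard route, and the one taken in \cite{TAI_FOREST_CYCLE}, is to run the induction on the number of edges and delete a \emph{chord} $e$ rather than a path edge. Then $G\setminus e$ still contains the same Hamiltonian path, so that branch is immediate; all the work shifts to bounding $\reg I(G_e)+1$ for a chord $e$. This is a real simplification over your plan, because it eliminates precisely the branch you could not control.
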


\section{Regularity and induced matching number of a dumbbell graph}\label{Regularity_of_the_dumbbell_section}

In this section we compute the induced matching number of a dumbbell graph and the regularity of its edge ideal.
Recall that $\cnplcm$  denotes the graph constructed by joining two cycles  $C_n$ and $C_m$ via a path $P_l$.
In this section, we denote the vertices of $C_n$, $C_m$ and $P_l$ by $\{x_1, \ldots, x_n\}$, $\{ y_1, \ldots, y_m\}$ and $\{z_1, \ldots, z_{l} \}$, respectively.
We make the identifications $x_1=z_1$ and $y_1=z_l$.

\begin{exmp}
	\label{example_first_dumbbells}
	Two base cases when $l=2$ and $l=1$ are the following: 
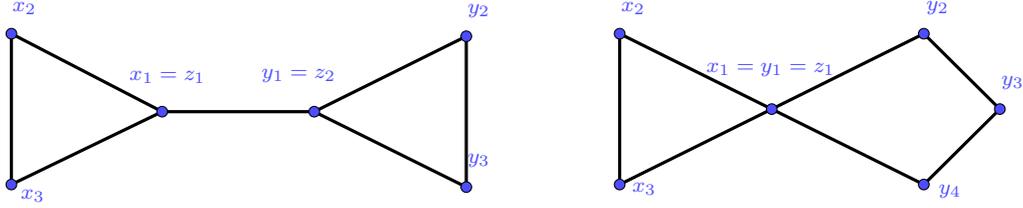
\begin{figure}[H]
		\definecolor{ududff}{rgb}{0.30196078431372547,0.30196078431372547,1.}
		\begin{minipage}[t]{\fboxrule}%
			\hspace*{-7cm}{
				\begin{tikzpicture}[line cap=round,line join=round,>=triangle 45,x=1cm,y=1cm]
				\clip(-1.5,2.7) rectangle (12.5,5.5);
				\draw [line width=1.2pt] (-0.9973727686154237,5.)-- (-0.9973727686154237,3.);
				\draw [line width=1.2pt] (-0.9973727686154237,3.)-- (0.9855556692596585,3.9658568757501644);
				\draw [line width=1.2pt] (0.9855556692596585,3.9658568757501644)-- (-0.9973727686154237,5.);
				\draw [line width=1.2pt] (0.9855556692596585,3.9658568757501644)-- (2.9855556692596563,3.9658568757501644);
				\draw [line width=1.2pt] (2.9855556692596563,3.9658568757501644)-- (4.9855556692596465,4.965856875750165);
				\draw [line width=1.2pt] (4.9855556692596465,4.965856875750165)-- (4.9855556692596465,2.9658568757501644);
				\draw [line width=1.2pt] (4.9855556692596465,2.9658568757501644)-- (2.9855556692596563,3.9658568757501644);
				\draw [line width=1.2pt] (7.0026272313845705,5.)-- (7.0026272313845705,3.);
				\draw [line width=1.2pt] (7.0026272313845705,3.)-- (9.002627231384558,4.);
				\draw [line width=1.2pt] (9.002627231384558,4.)-- (7.0026272313845705,5.);
				\draw [line width=1.2pt] (9.002627231384558,4.)-- (11.002627231384558,5.);
				\draw [line width=1.2pt] (11.002627231384558,5.)-- (12.002627231384558,4.);
				\draw [line width=1.2pt] (12.002627231384558,4.)-- (11.002627231384558,3.);
				\draw [line width=1.2pt] (11.002627231384558,3.)-- (9.002627231384558,4.);
				\begin{scriptsize}
				\draw [fill=ududff] (-0.9973727686154237,5.) circle (2.0pt);
				\draw[color=ududff] (-0.8254093679721619,5.343220543689411) node {$x_2$};
				\draw [fill=ududff] (-0.9973727686154237,3.) circle (2.0pt);
				\draw[color=ududff] (-0.7157344243058552,2.85209254863818) node {$x_3$};
				\draw [fill=ududff] (0.9855556692596585,3.9658568757501644) circle (2.0pt);
				\draw[color=ududff] (1.0573438316327706,4.429262679803522) node {$x_1=z_1$};
				\draw [fill=ududff] (2.9855556692596563,3.9658568757501644) circle (2.0pt);
				\draw[color=ududff] (2.783118099294758,4.44754183708124) node {$y_1=z_2$};
				\draw [fill=ududff] (4.9855556692596465,4.965856875750165) circle (2.0pt);
				\draw[color=ududff] (5.151875061841555,5.306662229133976) node {$y_2$};
				\draw [fill=ududff] (4.9855556692596465,2.9658568757501644) circle (2.0pt);
				\draw[color=ududff] (5.151875061841555,3.314234085862738) node {$y_3$};
				\draw [fill=ududff] (7.0026272313845705,5.) circle (2.0pt);
				\draw[color=ududff] (7.18086151966823,5.343220543689411) node {$x_2$};
				\draw [fill=ududff] (7.0026272313845705,3.) circle (2.0pt);
				\draw[color=ududff] (7.327094777889973,2.9400467266969713) node {$x_3$};
				\draw [fill=ududff] (9.002627231384558,4.) circle (2.0pt);
				\draw[color=ududff] (8.989427360107396,4.521729196247007) node {$x_1=y_1=z_1$};
				\draw [fill=ududff] (11.002627231384558,5.) circle (2.0pt);
				\draw[color=ududff] (11.183996963488426,5.343220543689411) node {$y_2$};
				\draw [fill=ududff] (12.002627231384558,4.) circle (2.0pt);
				\draw[color=ududff] (12.171071456485185,4.356146050692651) node {$y_3$};
				\draw [fill=ududff] (11.002627231384558,3.) circle (2.0pt);
				\draw[color=ududff] (11.348509378987885,2.9120926257529467) node {$y_4$};
				\end{scriptsize}
				\end{tikzpicture}}
		\end{minipage}		
		\caption{The graphs $C_3 \cdot P_2 \cdot C_3$ and $C_3\cdot P_1 \cdot C_4$.}
\end{figure}
\end{exmp}

\begin{nota}
	Let $\xi_3$ be the function defined as below
	$$
	\xi_3(n) = \begin{cases}
		1 \qquad \text{ if } n \equiv 0, 1 \;(\Mod 3),\\
		0 \qquad \text{ if } n \equiv 2 \;(\Mod 3).
	\end{cases}
	$$ 
\end{nota}

Let $C_n\cdot P_l$ be the graph given by connecting the path $P_l$ to the cycle $C_n$.  For instance, the graph $C_3\cdot P_3$ can be illustrated as the following: 
	\begin{center}
		\definecolor{ududff}{rgb}{0.30196078431372547,0.30196078431372547,1.}
			\begin{tikzpicture}[line cap=round,line join=round,>=triangle 45,x=1.0cm,y=1.0cm]
			\clip(2.5,1.6) rectangle (9.7,4.5);
			\draw [line width=1.2pt] (3.,4.)-- (3.,2.);
			\draw [line width=1.2pt] (3.,2.)-- (5.,3.);
			\draw [line width=1.2pt] (5.,3.)-- (3.,4.);
			\draw [line width=1.2pt] (5.,3.)-- (7.,3.);
			\draw [line width=1.2pt] (7.,3.)-- (9.,3.);
			\begin{scriptsize}
			\draw [fill=ududff] (3.,4.) circle (2.0pt);
			\draw[color=ududff] (3.19,4.38) node {$x_2$};
			\draw [fill=ududff] (3.,2.) circle (2.0pt);
			\draw[color=ududff] (3.43,1.88) node {$x_3$};
			\draw [fill=ududff] (5.,3.) circle (2.0pt);
			\draw[color=ududff] (4.97,3.52) node {$x_1=z_1$};
			\draw [fill=ududff] (7.,3.) circle (2.0pt);
			\draw[color=ududff] (7.15,3.48) node {$z_2$};
			\draw [fill=ududff] (9.,3.) circle (2.0pt);
			\draw[color=ududff] (9.19,3.38) node {$z_3$};
			\end{scriptsize}
			\end{tikzpicture}
	\end{center}

\begin{prop}
	\label{nu_path_cycle}
	Let $n \ge 3$ and $l \ge 1$, then
	$$
	\nu(C_n \cdot P_l) = \Big\lfloor \frac{n}{3} \Big\rfloor + \Big\lfloor \frac{l - \xi_3(n) + 1}{3} \Big\rfloor.
	$$
		\begin{proof}
		\underline{Case 1:}
		From \autoref{rem_struct_match_circle}, in the case $n \equiv 2 \,(\Mod 3 )$ we have that in clockwise and anticlockwise directions the two consecutive edges to the vertex $x_1$ are not chosen in a maximal induced matching of $C_n$. 
		Then, we can choose the edges in $P_l$ without any constraint coming from the maximal induced matching chosen in $C_n$, and so we have $\nu(C_n\cdot P_l) = \lfloor \frac{n}{3} \rfloor + \lfloor \frac{l+1}{3} \rfloor$.
		
		\underline{Case 2:} It remain to consider the case $\xi_3(n)=1$, i.e., $n \equiv 0, 1 \; (\Mod  3)$.
		Let $\matching$ be an induced matching of maximal size in $G$.
		We analyze separately the two cases of whether $z_1z_2$ (the edge adjacent to the cycle $C_n$) is in $\matching$ or not.
		
		Suppose  $z_1z_2$ is not an edge of $\matching$.
		Then $\matching$ can be considered as the union of a maximal matching of $C_n$ as introduced in \autoref{rem_struct_match_circle} and a maximal matching of the path $P_l \setminus z_1$.
		Thus $\lvert \matching \rvert = \nu(C_n) + \nu(P_{l-1})=\lfloor \frac{n}{3} \rfloor +  \lfloor \frac{(l-1)+1}{3}\rfloor$.
		
		If $z_1z_2 \in \matching$, then none of the edges incident to the vertices in $N_{C_n}[x_1]=\{x_1,x_2,x_n\}$ are in ${\matching\mid}_{ C_n}:=\{e \in \matching \mid e \in C_n \}$. 
		Hence $\lvert {\matching\mid}_{ C_n} \rvert = \nu(P_{n-3})$, and since $n \equiv 0, 1 \; (\Mod  3)$ then it follows $\lvert {\matching\mid}_{ C_n} \rvert= \lfloor \frac{n-2}{3} \rfloor = \lfloor \frac{n}{3}\rfloor - 1 $.
		From $z_1z_2 \in \matching$ we get $\lvert {\matching\mid}_{P_l} \rvert = \nu(P_l)=\lfloor \frac{l+1}{3} \rfloor$.
		So, by joining both computations we get
		$
		\lvert\matching \rvert = 	\lfloor \frac{n}{3}\rfloor - 1 + \lfloor \frac{l+1}{3} \rfloor = \lfloor \frac{n}{3} \rfloor  + \lfloor \frac{l-2}{3} \rfloor. 
		$
		
		Therefore, we obtain that $\nu(C_n\cdot P_l)=\lfloor \frac{n}{3} \rfloor +  \lfloor \frac{(l-1)+1}{3}\rfloor$.		
	\end{proof}
\end{prop}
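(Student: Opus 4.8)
The plan is to prove the two inequalities $\nu(C_n\cdot P_l)\ge \lfloor n/3\rfloor+\lfloor (l-\xi_3(n)+1)/3\rfloor$ and $\nu(C_n\cdot P_l)\le \lfloor n/3\rfloor+\lfloor (l-\xi_3(n)+1)/3\rfloor$ separately, using throughout that $\nu(C_n)=\lfloor n/3\rfloor$ and $\nu(P_k)=\lfloor (k+1)/3\rfloor$ (\autoref{rem_struct_match_circle} and \autoref{nu_path}), together with the elementary structural observation that the only edges of $C_n\cdot P_l$ joining a cycle vertex to a path vertex are the ones incident to the junction vertex $x_1=z_1$. Consequently, whenever an induced matching of $C_n$ leaves $x_1$ uncovered, its union with an induced matching of the subpath on $z_2,\dots,z_l$ is automatically an induced matching of the whole graph.

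For the lower bound I would exhibit an explicit induced matching of the claimed size, splitting on the value of $\xi_3(n)$. If $n\equiv 2\;(\Mod 3)$, part (iii) of \autoref{rem_struct_match_circle} supplies a maximum induced matching of $C_n$ in which none of $x_1,x_2,x_n$ is covered; gluing it to a maximum induced matching of $P_l$ gives an induced matching of size $\nu(C_n)+\nu(P_l)=\lfloor n/3\rfloor+\lfloor (l+1)/3\rfloor$, which is the asserted value since $\xi_3(n)=0$. If $n\equiv 0,1\;(\Mod 3)$, I would instead use a maximum induced matching of $C_n$ leaving $x_1$ uncovered (parts (i)--(ii)) together with a maximum induced matching of the subpath on $z_2,\dots,z_l$; its size is $\lfloor n/3\rfloor+\nu(P_{l-1})=\lfloor n/3\rfloor+\lfloor l/3\rfloor$, again the asserted value since $\xi_3(n)=1$.

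For the upper bound I would take a maximum induced matching $\mathcal{M}$ of $C_n\cdot P_l$ and analyse the local behaviour at $x_1=z_1$ in three cases. If $z_1z_2\in\mathcal{M}$, then $x_2,x_n$ and $z_3$ must all be uncovered (an edge joining two covered vertices would be forced into $\mathcal{M}$), so $\mathcal{M}$ restricts to an induced matching of $C_n\setminus\{x_1,x_2,x_n\}\cong P_{n-3}$ and, after deleting $z_1z_2$, to an induced matching of $P_{l-3}$, giving $|\mathcal{M}|\le 1+\nu(P_{n-3})+\nu(P_{l-3})$. If $z_1z_2\notin\mathcal{M}$ and $x_1$ is uncovered, then $\mathcal{M}$ is the disjoint union of an induced matching of $C_n$ and an induced matching of the subpath on $z_2,\dots,z_l$, so $|\mathcal{M}|\le\nu(C_n)+\nu(P_{l-1})$. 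If $z_1z_2\notin\mathcal{M}$ but $x_1$ is covered, the covering edge must be a cycle edge ($x_1x_2$ or $x_1x_n$), which forces $z_2$ uncovered and yields $|\mathcal{M}|\le\nu(C_n)+\nu(P_{l-2})$. A short verification modulo $3$ then shows that each of these three numbers is at most $\lfloor n/3\rfloor+\lfloor (l-\xi_3(n)+1)/3\rfloor$, which finishes the proof.

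The argument involves no single deep step; the two points requiring care are (a) checking, in each gluing, that the union of a cycle induced matching and a path induced matching is genuinely \emph{induced} (this is exactly where leaving $x_1$ uncovered on the cycle side is used), and (b) the floor-function bookkeeping in the case $n\equiv 0,1\;(\Mod 3)$. I expect the latter to be the real obstacle: one must observe that a maximum induced matching of $C_n$ cannot simultaneously avoid $x_1$ and let the path be matched at full rate, so the path contributes only $\nu(P_{l-1})=\lfloor l/3\rfloor$ instead of $\nu(P_l)=\lfloor (l+1)/3\rfloor$ — and this is precisely the discrepancy recorded by the correction term $-\xi_3(n)$ in the formula.
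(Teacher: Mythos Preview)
Your proposal is correct and follows essentially the same approach as the paper: split according to $n\bmod 3$, build the lower bound by gluing a maximum induced matching of $C_n$ from \autoref{rem_struct_match_circle} (chosen to leave $x_1$ uncovered) with a maximum induced matching of the appropriate tail of $P_l$, and obtain the upper bound by analysing how a maximum induced matching behaves at the junction vertex $x_1=z_1$. Your write-up is in fact slightly more explicit than the paper's: you separate the two inequalities cleanly and add the sub-case ``$z_1z_2\notin\mathcal{M}$ but $x_1$ is covered by a cycle edge'', which the paper absorbs into its bound $\nu(C_n)+\nu(P_{l-1})$ without comment.
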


\begin{theorem}
	\label{formula_nu_dumbbell}
	Let $n,m \ge 3$ and $l \ge 1$, then 
	$$
	\nu(\cnplcm) = \Big\lfloor \frac{n}{3} \Big\rfloor + \Big\lfloor \frac{m}{3} \Big\rfloor + \Big\lfloor \frac{l-\xi_3(n)-\xi_3(m)+1}{3} \Big\rfloor.
	$$
	\begin{proof}
	We use the same argument as in \autoref{nu_path_cycle}.
	By \autoref{rem_struct_match_circle} we have that when either $n \equiv 2 \; (\Mod  3)$ or $m \equiv 2 \; (\Mod 3)$, then the maximal induced matching in $C_n$ or in $C_m$ does not affect the way we choose edges in the path $P_l$.
	
	In the case $n \equiv 0,1 \; (\Mod 3)$ we can choose a maximal induced matching that does not use the edge connected to the cycle $C_n$, which is the same as saying that we are not going to use one extreme vertex of the path $P_l$. 
	Similarly, when $m \equiv 0,1 \; (\Mod 3)$ we can drop the other extreme vertex. 
\end{proof}
\end{theorem}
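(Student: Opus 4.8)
The plan is to prove the identity by establishing the two matching bounds $\nu(\cnplcm)\ge R$ and $\nu(\cnplcm)\le R$, where $R$ denotes the claimed right-hand side, in the same spirit as \autoref{nu_path_cycle} but keeping careful track of what happens at \emph{both} junction vertices $x_1=z_1$ and $y_1=z_l$ simultaneously.

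For the lower bound I would simply exhibit an induced matching of size $R$. By \autoref{rem_struct_match_circle} choose a maximum induced matching $\mathcal{M}_n$ of $C_n$ avoiding the two edges at $x_1$ (and, when $n\equiv 2\pmod 3$, also avoiding $x_2x_3$ and $x_{n-1}x_n$, so that $x_1,x_2,x_n$ are all left unmatched), and make the symmetric choice $\mathcal{M}_m$ inside $C_m$. On the path, take a maximum induced matching $\mathcal{M}_P$ of the subpath induced on $\{z_{1+\xi_3(n)},\dots,z_{l-\xi_3(m)}\}$, which has $l-\xi_3(n)-\xi_3(m)$ vertices and hence (\autoref{nu_path}) induced matching number $\lfloor (l-\xi_3(n)-\xi_3(m)+1)/3\rfloor$. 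Since the only vertices along the connecting path with neighbours off the path are $x_1$ and $y_1$, and these together with the cycle-neighbours that could produce a chord have been deliberately kept unmatched, $\mathcal{M}_n\cup\mathcal{M}_P\cup\mathcal{M}_m$ is a matching whose matched vertices induce exactly these edges; its size is $R$. (When $l\le 2$ the path contributes no edge and one checks that the two cycle matchings already form an induced matching of the required size.)

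For the upper bound, let $\mathcal{M}$ be any induced matching and write $\mathcal{M}=\mathcal{M}_n\sqcup\mathcal{M}_P\sqcup\mathcal{M}_m$ for its restrictions to the (disjoint) edge sets $E(C_n)$, $E(P_l)$, $E(C_m)$. As $C_n$, $C_m$, and each subpath of $P_l$ are induced subgraphs, each piece is an induced matching of the corresponding graph. The crucial observations are: (1) if the edge $z_1z_2$ lies in $\mathcal{M}$ then, $\mathcal{M}$ being induced, $x_2$ and $x_n$ are unmatched, so $\mathcal{M}_n$ is an induced matching of $C_n\setminus\{x_1,x_2,x_n\}=P_{n-3}$ and $|\mathcal{M}_n|\le\lfloor (n-2)/3\rfloor=\lfloor n/3\rfloor-\xi_3(n)$; (2) if $z_1z_2\notin\mathcal{M}$ then $\mathcal{M}_P$ does not meet $z_1$; and the mirror statements at the $C_m$ end. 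Setting $\alpha=1$ if $z_1z_2\notin\mathcal{M}$ and $0$ otherwise, and $\beta$ analogously for $z_{l-1}z_l$, these give $|\mathcal{M}_n|\le\lfloor n/3\rfloor-(1-\alpha)\xi_3(n)$, $|\mathcal{M}_m|\le\lfloor m/3\rfloor-(1-\beta)\xi_3(m)$, and $|\mathcal{M}_P|\le\nu(P_{l-\alpha-\beta})$; summing and using the elementary inequality $\lfloor (t-k)/3\rfloor\ge\lfloor t/3\rfloor-k$ for $k\in\mathbb{Z}_{\ge 0}$, one checks over the four values of $(\alpha,\beta)$ that $|\mathcal{M}|\le R$. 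The cases $l=1$ (cycles sharing a vertex) and $l=2$ (one connecting edge adjacent to both cycles) are handled by the same two observations applied at the single relevant edge/vertex.

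The main obstacle is the upper-bound bookkeeping: bounding $|\mathcal{M}_n|$, $|\mathcal{M}_P|$, $|\mathcal{M}_m|$ independently loses the needed unit, so one must couple ``$z_1z_2\in\mathcal{M}$ shrinks $\mathcal{M}_n$'' with ``$z_1z_2\notin\mathcal{M}$ shrinks $\mathcal{M}_P$'' (and likewise at the far end) and then verify the floor arithmetic in each of the four combined cases. A minor but essential point underlying both directions is that $x_1$ and $y_1$ are the only vertices of the connecting path with neighbours off the path, which is what lets one read off induced subpaths and apply \autoref{nu_path}.
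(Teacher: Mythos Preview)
Your proof is correct and follows essentially the same approach as the paper's: both arguments work by analyzing, at each junction vertex $x_1=z_1$ and $y_1=z_l$, whether the adjacent path edge lies in the induced matching, and then invoking \autoref{rem_struct_match_circle} and \autoref{nu_path} exactly as in \autoref{nu_path_cycle}. The paper's proof is a terse sketch that refers the reader back to \autoref{nu_path_cycle}, whereas you have spelled out both the construction for the lower bound and the four-case floor arithmetic for the upper bound; your added care with the degenerate cases $l\le 2$ is appropriate and correct.
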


The aim of the  rest of this section is to explicitly compute the regularity of $ \edge{\cnlcm{l}} $ in term of the induced matching number.  We divide it into three subsections depending on the value of $l\; \text{mod}\; 3$.
The base of our computations is given by the following proposition.

\begin{prop}
	\label{induct_skel_lozin}
	Let $n, m \ge 3$ and $l \ge 1$, then 
	$$
	\reg{I(\cnplcm)} -  \nu(\cnplcm) = \reg{I(C_n \cdot P_{l+3} \cdot C_m)} - \nu(C_n \cdot P_{l+3} \cdot C_m).
	$$
		\begin{proof}	
		From the formula obtained in \autoref{formula_nu_dumbbell} or \cite[Lemma 1]{LOZIN_TRANSFORMATION}, we have the equality
		$$
		\nu(C_n\cdot P_{l+3} \cdot C_m) = \nu(\cnplcm) + 1.
		$$
		We can apply the Lozin transformation (see e.g. \cite{LOZIN_TRANSFORMATION}, \cite{LOZIN_TRANS}) to any of the vertices in the bridge $P_l$, then from \cite[Theorem 1.1]{LOZIN_TRANS} we have 
		$$
		\reg{I(C_n \cdot P_{l+3} \cdot C_m)} = \reg{I(\cnplcm)} + 1.
		$$
		Thus, the statement of the proposition follows by subtracting these equalities.  
	\end{proof}
\end{prop}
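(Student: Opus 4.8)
The plan is to express both terms on the right-hand side as the images of the left-hand terms under a single Lozin transformation, so that the two $+1$'s cancel in the difference $\reg - \nu$. Concretely, I would establish the two equalities
$$
\nu(C_n \cdot P_{l+3} \cdot C_m) = \nu(\cnplcm) + 1
\qquad\text{and}\qquad
\reg{I(C_n \cdot P_{l+3} \cdot C_m)} = \reg{I(\cnplcm)} + 1,
$$
and then subtract them.

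The first equality is purely combinatorial. I would read it off the closed formula of \autoref{formula_nu_dumbbell}: replacing $l$ by $l+3$ leaves the first two floor terms untouched and sends the last one to $\lfloor \frac{(l+3)-\xi_3(n)-\xi_3(m)+1}{3}\rfloor = \lfloor \frac{l-\xi_3(n)-\xi_3(m)+1}{3}\rfloor + 1$. Alternatively this is exactly the content of \cite[Lemma 1]{LOZIN_TRANSFORMATION} applied at a bridge vertex, since a Lozin transformation always raises the induced matching number by one.

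For the second equality, the key observation is that $C_n \cdot P_{l+3} \cdot C_m$ is obtained from $\cnplcm$ by performing a Lozin transformation at a vertex lying on the connecting path $P_l$: the transformation replaces the chosen vertex by a short gadget, and when that vertex is a bridge vertex of a dumbbell the effect is precisely to splice three extra vertices into the path, turning $P_l$ into $P_{l+3}$ (this also covers $l = 1$ and $l = 2$, one simply transforms one of the available vertices $z_1,\dots,z_l$ of $P_l$). Granting this identification, \cite[Theorem 1.1]{LOZIN_TRANS} — which asserts that a Lozin transformation increases the regularity of the edge ideal by exactly one — gives the equality at once. Subtracting the two displayed equalities then yields the proposition.

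The only nontrivial input is \cite[Theorem 1.1]{LOZIN_TRANS}, which I would cite rather than reprove; on our side the single point that warrants verification is that performing the Lozin transformation at a bridge vertex of $\cnplcm$ returns $C_n \cdot P_{l+3} \cdot C_m$ and not some other graph — a routine unwinding of the definition of the transformation, needing a word of care only in the small cases $l \le 2$, where the transformed vertex is simultaneously a vertex of one of the cycles. With that in hand the proof is complete, no further estimates being required.
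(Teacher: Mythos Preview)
Your proposal is correct and follows essentially the same approach as the paper: both arguments apply the Lozin transformation at a bridge vertex, invoke \cite[Theorem 1.1]{LOZIN_TRANS} for the regularity increment and \autoref{formula_nu_dumbbell} (or \cite[Lemma 1]{LOZIN_TRANSFORMATION}) for the induced matching increment, then subtract. Your added remark about verifying that the transformation at a bridge vertex really returns $C_n\cdot P_{l+3}\cdot C_m$, especially when $l\le 2$ and the vertex lies on a cycle, is a detail the paper leaves implicit.
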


From the previous proposition,  it follows that we only need to consider the cases $l = 1$, $l = 2$ and $l = 3$.
We treat each case in a separate subsection.
In the following theorem we compute the regularity of the edge ideal of the dumbbell $\cnplcm$.

\begin{theorem}
	\label{reg_dumbbell}
	Let $m, n \ge 3$ and $l \ge 1$, then
	\begin{enumerate}[(i)]
		\item if $l\equiv 0,1 \;(\Mod 3)$, then
		$$
		\reg{ I(\cnlcm{l}) } = \begin{cases}
		\nu(\cnlcm{l})+2 \qquad \text{\normalfont if } n , m\equiv 2  \;(\Mod 3),\\
		\nu(\cnlcm{l}) +1 \qquad  \text{\normalfont otherwise;}
		\end{cases}
		$$ 
		\item  if $l\equiv 2 \;(\Mod 3)$, then
		$$
		\reg{\edge{\cnplcm}}=
		\begin{cases}
		\nu(\cnplcm)+2 \;  & n\equiv 0,1 \;(\Mod 3), \; m\equiv 2 \;(\text{mod}\;3);\\
		\nu(\cnplcm)+1 \;   & \text{\normalfont otherwise}.\\
		\end{cases}
		$$
	\end{enumerate} 
	\begin{proof}
		Follows from \autoref{induct_skel_lozin}, and \autoref{reg of cn p1 cm}, \autoref{formula_reg_dumbbell}, and \autoref{reg of cn p3 cm}. 
	\end{proof}
\end{theorem}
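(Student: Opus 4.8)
The plan is to reduce, via the Lozin transformation, to a handful of small cases and then bootstrap from the known regularities of forests and cycles. By \autoref{induct_skel_lozin} the quantity $\reg{I(\cnplcm)} - \nu(\cnplcm)$ depends only on $n$, $m$ and $l\bmod 3$, so it suffices to prove the formula for $l=1$, $l=2$ and $l=3$; these are the lemmas quoted in the proof above. Using the standing convention $n\bmod 3\le m\bmod 3$, I would in each base case argue according to the pattern of $(n\bmod 3,\, m\bmod 3)$ — both entries in $\{0,1\}$, exactly one entry equal to $2$, or both equal to $2$ — and \autoref{formula_nu_dumbbell} predicts that the exceptional value $\reg{I(G)}=\nu(G)+2$ occurs precisely when either $n,m\equiv 2\pmod 3$ and $l\not\equiv 2\pmod 3$, or $n\equiv 0,1\pmod 3$, $m\equiv 2\pmod 3$ and $l\equiv 2\pmod 3$.

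For the lower bounds, $\reg{I(G)}\ge\nu(G)+1$ is \autoref{lower_bound_Katzman}. To get the sharper $\reg{I(G)}\ge\nu(G)+2$ in the exceptional cases I would exhibit an induced subgraph $H\subseteq G$ which is a disjoint union of $c$ cycles of length $\equiv 2\pmod 3$ together with some paths: using \autoref{regularity Cn}, \autoref{reg_Forests} and the additivity $\reg{I(H_1\sqcup H_2)}=\reg{I(H_1)}+\reg{I(H_2)}-1$, such an $H$ satisfies $\reg{I(H)}=\nu(H)+c+1$, so it is enough to produce $H$, with no edge joining it to the rest of $G$, such that $\nu(H)=\nu(G)+1-c$. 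Concretely: if $n,m\equiv 2\pmod 3$ and $l=3$, delete $z_2$ to get $H=\cn\sqcup\cm$ (with $c=2$); if $n,m\equiv 2\pmod 3$ and $l=1$, delete the two $C_m$-neighbours of the cut vertex to get $H=\cn\sqcup P_{m-3}$ (with $c=1$); if $n\equiv 0,1\pmod 3$, $m\equiv 2\pmod 3$ and $l=2$, delete $x_1$ to get $H=P_{n-1}\sqcup\cm$ (with $c=1$). In each case \autoref{formula_nu_dumbbell}, together with \autoref{rem_struct_match_circle} to pin down the induced matching numbers of the pieces, gives $\nu(H)=\nu(G)+1-c$, and \autoref{short exact sequence}(i) yields $\reg{I(G)}\ge\reg{I(H)}=\nu(G)+2$.

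For the upper bounds I would induct on $n+m$, applying \autoref{short exact sequence}(ii) or (iii) to a carefully chosen vertex or edge adjacent to one of the cycles. A good choice splits $G$ into strictly smaller pieces: a forest (handled by \autoref{reg_Forests}), a disjoint union containing a cycle (handled by \autoref{regularity Cn}), or a unicyclic graph — whose regularity is that of \cite{REG_UNICYCLIC_GRAPH}, which itself collapses to forests and cycles after finitely many further applications of \autoref{short exact sequence}. When the exceptional value $\nu(G)+2$ is expected and $l=1$, the bound $\reg{I(G)}\le\nu(G)+2$ is immediate from \autoref{Upper_bound_reg_decyc} because $\nabla(G)=1$, and for $l=2,3$ a single application of \autoref{short exact sequence}(iii) to the bridge edge (resp. of (ii) to $z_2$) transfers it.

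The step I expect to be the main obstacle is that the obvious local splittings overshoot the target in several configurations: for $l=2$ with $n\equiv m\equiv 2\pmod 3$ the term $\reg{I(G\setminus e)}+1$ coming from the bridge edge already equals $\nu(G)+2$ rather than the desired $\nu(G)+1$, and a similar overshoot of the term $\reg{I(G\setminus N[x])}+1$ appears for $l=3$ when $n\equiv m\pmod 3$ with $n,m\equiv 0,1$. To close those cases one must either choose the deleted vertex more cleverly — using the freedom in \autoref{rem_struct_match_circle} to keep a maximal induced matching of the cycle away from the cut — or discard local splitting in favour of the global estimate \autoref{Hamiltonian}: every dumbbell $\cnplcm$ has a Hamiltonian path on its $n+m+l-2$ vertices, and in exactly the stubborn configurations the resulting bound $\lfloor (n+m+l-1)/3\rfloor+1$ coincides with $\nu(G)+1$ by \autoref{formula_nu_dumbbell}. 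The remaining work is then the routine, if lengthy, verification — via the floor identities of \autoref{formula_nu_dumbbell} — that the chosen split is tight in each of the finitely many residue classes of $(n,m,l)$ modulo $3$; the reduction of \autoref{induct_skel_lozin} is precisely what keeps this bookkeeping finite.
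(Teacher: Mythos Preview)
Your reduction via \autoref{induct_skel_lozin} to $l\in\{1,2,3\}$ and the subsequent case split is exactly the paper's strategy. Several of your specific choices are in fact cleaner than the paper's: for $l=1$ with $n,m\equiv 2$, your upper bound via $\nabla(G)=1$ is slicker than the paper's vertex-deletion argument, and your lower-bound subgraph $C_n\sqcup P_{m-3}$ avoids invoking the unicyclic classification that the paper uses (it takes $C_m\cdot P_{n-1}$ instead). For $l=2$ with $n,m\equiv 2$, your Hamiltonian-path bound works and replaces the paper's Kalai--Meshulam decomposition into three pieces.

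There is, however, a genuine error in your handling of the $l=3$ upper bound. First, the overshoot at $z_2$ occurs for \emph{all} $n,m\equiv 0,1\pmod 3$, not only when $n\equiv m$: one always has $\nu(P_{n-1})=\lfloor n/3\rfloor$ in this range, so $\reg I(G\setminus N[z_2])+1=\nu(G)+2$. Second, the Hamiltonian fallback does not close the case $n\equiv m\equiv 1\pmod 3$: with $n+m+1$ vertices the bound is $\lfloor(n+m+2)/3\rfloor+1=(n+m+1)/3+1$, which exceeds $\nu(G)+1=(n+m+1)/3$ by one (try $n=m=4$). Your alternative suggestion of a smarter vertex does rescue this --- deleting $x_1$ rather than $z_2$ gives $G\setminus N[x_1]=P_{n-3}\sqcup C_m$, and the extra two vertices lost from $C_n$ are exactly what is needed --- but you do not say so. The paper sidesteps the whole issue by deleting the \emph{edge} $z_1z_2$ instead: then $G_e=P_{n-3}\sqcup P_{m-1}$ and $G\setminus e=C_n\sqcup(C_m\cdot P_2)$, and both terms land on target after one appeal to the unicyclic formula of \cite{REG_UNICYCLIC_GRAPH}. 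A minor slip: in the $l=2$ discussion you attach a spurious $+1$ to $\reg I(G\setminus e)$; the overshoot is real, but it comes from $\reg I(C_n\sqcup C_m)$ itself already equalling $\nu(G)+2$.
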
 

The basic approach in the next three subsections is to obtain lower and upper bounds that coincide. 

\subsection{The case $l=1$}\hspace{\fill} \\
Throughout this subsection, we consider the dumbbell graph $ \cnlcm{1} $.

\begin{prop}
	\label{ineq_l_=1}
	Let $n,m \ge 3$, then 
	$$
	\reg{I(C_n\cdot P_1 \cdot C_m)} \le \max \Big\{ \Big\lfloor \frac{n}{3} \Big\rfloor + \Big\lfloor \frac{m}{3} \Big\rfloor + 1,  \Big\lfloor \frac{n-2}{3} \Big\rfloor + \Big\lfloor \frac{m-2}{3} \Big\rfloor +  2 \Big\}.
	$$
	Moreover, $\reg{I(C_n\cdot P_1 \cdot C_m)}$ is equal to one of these terms.
	\begin{proof}
		We  use \cite[Lemma 3.2]{DAO_HUNEKE}, that gives an improved version of the exact sequence coming from deleting the vertex $z_1$.
		We have  
		$$
		\reg{I(C_n\cdot P_1 \cdot C_m)} \in \Big\{ \reg{I\big((C_n\cdot P_1 \cdot C_m) \setminus z_1\big)}, \reg{I\big((C_n\cdot P_1 \cdot C_m) \setminus N[z_1]\big)} + 1
		\Big\}.
		$$
		Since $(C_n\cdot P_1 \cdot C_m) \setminus z_1=P_{n-1} \cup P_{m-1}$ and $(C_n\cdot P_1 \cdot C_m) \setminus N[z_1]=P_{n-3} \cup P_{m-3}$,  we get the result by applying \autoref{reg_Forests}.
	\end{proof}
\end{prop}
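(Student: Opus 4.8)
The plan is to use the fact that $G := C_n\cdot P_1\cdot C_m$ has a single cut vertex $z_1 = x_1 = y_1$ lying on both cycles, so that removing $z_1$, or its closed neighborhood, breaks both cycles simultaneously and leaves a forest. First I would apply \autoref{short exact sequence}(ii) with $x = z_1$ to get
$$
\reg{I(G)} \le \max\big\{ \reg{I(G\setminus z_1)},\ \reg{I(G\setminus N[z_1])} + 1 \big\}.
$$
To upgrade this to the stronger assertion that $\reg{I(G)}$ is actually \emph{equal} to one of the two displayed quantities, I would instead invoke the sharper version of this exact sequence due to Dao and Huneke, namely \cite[Lemma 3.2]{DAO_HUNEKE}, which yields the membership $\reg{I(G)} \in \{\reg{I(G\setminus z_1)},\ \reg{I(G\setminus N[z_1])} + 1\}$.

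Next I would identify the two residual graphs. Since the edges at $z_1$ go to $x_2,x_n$ on $C_n$ and to $y_2,y_m$ on $C_m$, deleting $z_1$ opens each cycle into a path, so $G\setminus z_1 \cong P_{n-1}\cup P_{m-1}$; deleting $N[z_1] = \{x_1,x_2,x_n,y_2,y_m\}$ removes three consecutive vertices from each cycle, so $G\setminus N[z_1] \cong P_{n-3}\cup P_{m-3}$, with $P_0$ understood as the empty graph when $n = 3$ or $m = 3$. Both of these are forests, so \autoref{reg_Forests} computes their regularities through the induced matching number; combining this with the additivity of Castelnuovo--Mumford regularity over disjoint unions of graphs and the identity $\nu(P_l) = \lfloor (l+1)/3\rfloor$ from \autoref{nu_path}, I obtain
$$
\reg{I(G\setminus z_1)} = \nu(P_{n-1}) + \nu(P_{m-1}) + 1 = \Big\lfloor\frac{n}{3}\Big\rfloor + \Big\lfloor\frac{m}{3}\Big\rfloor + 1
$$
and
$$
\reg{I(G\setminus N[z_1])} + 1 = \nu(P_{n-3}) + \nu(P_{m-3}) + 2 = \Big\lfloor\frac{n-2}{3}\Big\rfloor + \Big\lfloor\frac{m-2}{3}\Big\rfloor + 2.
$$
Substituting these into the membership statement gives both the claimed inequality and the ``equal to one of these terms'' assertion at once.

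There is no substantial obstacle here; the one input genuinely needed beyond the basic toolkit is the Dao--Huneke refinement, since the plain exact sequence of \autoref{short exact sequence}(ii) only produces the $\le\max$ bound and not the two-valued dichotomy. The only care required is bookkeeping in the degenerate cases $n = 3$ or $m = 3$, where one of $P_{n-3}$, $P_{m-3}$ is empty; there one simply checks that the floor-function identities above persist under the usual conventions $\nu(P_0) = 0$ and $\reg{R/I(P_0)} = 0$.
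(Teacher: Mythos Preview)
Your proposal is correct and follows essentially the same approach as the paper: delete the shared vertex $z_1$, invoke the Dao--Huneke refinement \cite[Lemma 3.2]{DAO_HUNEKE} to get the two-valued membership, identify the residual graphs as $P_{n-1}\cup P_{m-1}$ and $P_{n-3}\cup P_{m-3}$, and evaluate their regularities via \autoref{reg_Forests} and \autoref{nu_path}. Your write-up is in fact more explicit than the paper's, spelling out the floor computations and flagging the degenerate cases $n=3$ or $m=3$.
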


\begin{theorem}\label{reg of cn p1 cm}
	Let $n,m\ge 3$, then
	$$
	\reg{\edge{C_n \cdot P_1 \cdot C_m}}=
	\begin{cases}
	\nu(C_n \cdot P_1 \cdot C_m)+2 \;  & \text{\normalfont if } n\equiv 2 \;(\Mod 3), \; m\equiv 2 \;(\Mod 3);\\
	
	\nu(C_n \cdot P_1 \cdot C_m)+1 \;   & \text{\normalfont otherwise}.\\
	\end{cases}
	$$
	\begin{proof}
		Suppose $n\equiv 2 \;(\text{mod}\;3) $ and $ \; m\equiv 2 \;(\text{mod}\;3)$. Since
		$ \lfloor \frac{k-2}{3} \rfloor=\lfloor \frac{k}{3} \rfloor $ when $ k\equiv 2 \;(\text{mod}\;3) $, we have  
		$$ \max\{ \lfloor \frac{n}{3} \rfloor + \lfloor \frac{m}{3} \rfloor + 1,  \lfloor \frac{n-2}{3} \rfloor + \lfloor \frac{m-2}{3} \rfloor +  2 \}=
		\lfloor \frac{n}{3} \rfloor + \lfloor \frac{m}{3} \rfloor +  2.  $$
		Thus \autoref{ineq_l_=1} yields 
		\begin{equation}\label{eq_3}
			\reg{\edge{C_n \cdot P_1 \cdot C_m}}\leq \lfloor \frac{n}{3} \rfloor + \lfloor \frac{m}{3} \rfloor +  2.
		\end{equation}
		Consider the induced subgraph $H=(C_n \cdot P_1 \cdot C_m) \setminus \{x_n\}$ where $ x_n $ is in $ C_n $ and it is incident to $ x_1 $ (e.g. see $ x_3 $ in \autoref{example_first_dumbbells}). 
		In fact,  $H$ is the graph given by joining $C_m$ and a path  $P_{n-1}$, that is, $H=C_m\cdot P_{n-1}$. 
		Now from    \autoref{nu_path_cycle}, we have that $\nu(H)=\lfloor \frac{n}{3} \rfloor + \lfloor \frac{m}{3} \rfloor $.  
		By \autoref{short exact sequence} $ (i) $, we get $\reg{I(C_n \cdot P_1 \cdot C_m)} \ge \reg{I(H)}$. 
		From \cite[Theorem 1.1]{REG_UNICYCLIC_GRAPH}, we have $\reg{I(H)}=\nu(H)+2$. 
		Therefore, the equality holds in \autoref{eq_3}. 
		The proof of this part is complete since \autoref{formula_nu_dumbbell} yields $ \nu(\cnlcm{1})=\lfloor \frac{n}{3} \rfloor + \lfloor \frac{m}{3} \rfloor  $.
		
		For any case distinct to $n\equiv 2 \;(\text{mod}\;3) $ and $ \; m\equiv 2 \;(\text{mod}\;3)$, we have
		$$ \max\{ \lfloor \frac{n}{3} \rfloor + \lfloor \frac{m}{3} \rfloor + 1,  \lfloor \frac{n-2}{3} \rfloor + \lfloor \frac{m-2}{3} \rfloor +  2 \}=
		\lfloor \frac{n}{3} \rfloor + \lfloor \frac{m}{3} \rfloor +  1.  $$
		Therefore, from \autoref{ineq_l_=1}, we have
		\begin{equation}\label{eq_4}
			\reg{	\edge{C_n \cdot P_1 \cdot C_m}}\leq\lfloor \frac{n}{3} \rfloor + \lfloor \frac{m}{3} \rfloor +1.
		\end{equation}
		From \autoref{formula_nu_dumbbell}, we have $ \nu(C_n \cdot P_1 \cdot C_m)= \lfloor \frac{n}{3} \rfloor + \lfloor \frac{m}{3} \rfloor $.
		Moreover, \autoref{lower_bound_Katzman} gives $\reg{	\edge{C_n \cdot P_1 \cdot C_m}} \ge \nu(C_n \cdot P_1 \cdot C_m) + 1$. Thus, the equality in \autoref{eq_4} holds.
		Therefore the proof is complete. 		
	\end{proof}
\end{theorem}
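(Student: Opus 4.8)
The plan is to obtain matching lower and upper bounds, splitting along the two cases of the statement. Write $G = C_n\cdot P_1\cdot C_m$; by \autoref{formula_nu_dumbbell} the third floor term vanishes when $l=1$, so $\nu(G)=\lfloor n/3\rfloor+\lfloor m/3\rfloor$, and \autoref{lower_bound_Katzman} already supplies the universal lower bound $\reg{I(G)}\ge\nu(G)+1$. Hence the only real tasks are (a) an upper bound, and (b) upgrading the lower bound to $\nu(G)+2$ when $n\equiv m\equiv 2\pmod{3}$.

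For (a) I would use \autoref{ineq_l_=1} as a black box: it asserts that $\reg{I(G)}$ equals one of the two integers $A:=\lfloor n/3\rfloor+\lfloor m/3\rfloor+1$ and $B:=\lfloor (n-2)/3\rfloor+\lfloor (m-2)/3\rfloor+2$, and is at most $\max\{A,B\}$. So everything reduces to comparing $A$ and $B$ via the elementary identity $\lfloor (k-2)/3\rfloor=\lfloor k/3\rfloor$ if $k\equiv 2\pmod{3}$ and $\lfloor (k-2)/3\rfloor=\lfloor k/3\rfloor-1$ otherwise. If $n\equiv m\equiv 2\pmod{3}$ then $B=\lfloor n/3\rfloor+\lfloor m/3\rfloor+2=\nu(G)+2\ge A$, giving $\reg{I(G)}\le\nu(G)+2$; in every remaining case at least one of the two decrements fires, so $B\le \nu(G)+1=A$ and hence $\reg{I(G)}\le\nu(G)+1$. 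Combined with Katzman's bound, the ``otherwise'' case of the theorem is already finished.

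It remains to prove $\reg{I(G)}\ge\nu(G)+2$ when $n\equiv m\equiv 2\pmod{3}$, and for this I would exhibit an induced subgraph whose regularity is already $\nu(G)+2$ and apply monotonicity under induced subgraphs (\autoref{short exact sequence}(i)). The natural subgraph is $H=G\setminus\{x_n\}$, where $x_n\in C_n$ is adjacent to the gluing vertex $x_1$: deleting $x_n$ opens $C_n$ into a path while keeping $C_m$ intact, so $H=C_m\cdot P_{n-1}$ is unicyclic. By \autoref{nu_path_cycle} together with $\xi_3(m)=0$ one gets $\nu(H)=\lfloor m/3\rfloor+\lfloor n/3\rfloor=\nu(G)$, and since the cycle $C_m$ of $H$ satisfies $m\equiv 2\pmod{3}$, the regularity formula for unicyclic graphs \cite[Theorem 1.1]{REG_UNICYCLIC_GRAPH} yields $\reg{I(H)}=\nu(H)+2=\nu(G)+2$. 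Thus $\reg{I(G)}\ge\reg{I(H)}=\nu(G)+2$, matching the upper bound from (a).

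The main obstacle is precisely this last step: recognizing that removing one carefully chosen vertex turns the bicyclic graph into a unicyclic graph with the \emph{same} induced matching number --- which works exactly because $m\equiv 2\pmod{3}$ keeps $\xi_3(m)=0$ --- so that the known ``$+2$'' behavior attached to cycles of residue $2$ transfers upward. Everything else is floor-function bookkeeping layered on top of results already available in the excerpt.
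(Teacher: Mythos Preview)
Your proposal is correct and follows essentially the same argument as the paper's proof: both use \autoref{ineq_l_=1} together with the floor-function identity to handle the upper bound in each case, invoke \autoref{lower_bound_Katzman} for the ``otherwise'' lower bound, and obtain the sharper lower bound in the $n\equiv m\equiv 2\pmod 3$ case by passing to the induced unicyclic subgraph $H=G\setminus\{x_n\}=C_m\cdot P_{n-1}$ and applying \cite[Theorem 1.1]{REG_UNICYCLIC_GRAPH}. Your explicit remark that $\xi_3(m)=0$ forces $\nu(H)=\nu(G)$ is a small clarification beyond what the paper writes, but the structure is identical.
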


\subsection{The case $l=2$} \hspace{\fill} \\
Throughout this subsection, we consider the dumbbell graph $\cnlcm{2}$. 
\begin{rem}
	\label{remark_reg_cycle_formula}
	The regularity of  $\edge{\cn}$ is given in \autoref{regularity Cn}.	For simplicity of notation, we  use the equivalent formula $\reg{I(\cn)} = \lfloor \frac{n-2}{3}\rfloor + 2$.
\end{rem}

\begin{prop}
	\label{prop_lower_upper_bound_reg}
	Let $n,m\ge 3$, then 
	\begin{equation}\label{eq_2}
		\nu(\cncm) \le \reg(\frac{R}{I(\cncm)}) \le \lfloor \frac{n-2}{3} \rfloor + \lfloor \frac{m-2}{3} \rfloor +  2.
	\end{equation}
\begin{proof}
	We only need to prove the inequality on the right since the lower bound is given due to \autoref{lower_bound_Katzman} and $ \reg{(J)}-1=\reg{(\frac{R}{J})} $ for any ideal of $J\subset R$. In the original graph $\cncm$ we shall remove the edge  that connects the two cycles $\cn$ and $\cm$. .
	The set of vertices of $\cn$ and $\cm$ are given respectively by $\{x_1, \ldots, x_n\}$ and $\{ y_1, \ldots, y_m	\}$, and we assume that the edge $e = x_1y_1$ is the bridge between the two cycles. Also, we denote by $\cn \cup \cm$ the resulting graph given as the disjoint union of the two cycles $\cn$ and $\cm$. 
	Thus \autoref{short exact sequence}$(iii)$ yields the inequality
	$$
	\reg{\left(\frac{R}{I(\cncm)}\right)} \le \max\Big\{ \reg{\left(\frac{R}{I(\cn \cup \cm):e}\right)} + 1, \reg{\left(\frac{R}{I(\cn \cup \cm)}\right)} \Big\}.
	$$
	
	From \cite[Lemma 3.2]{HOA_TAM} we have that the regularity of the two disjoint cycles $\cn \cup \cm$ is given by 
	$$
	\reg{\left(\frac{R}{I(\cn \cup \cm)}\right)} = \reg{\left(\frac{R}{I(\cn)}\right)} + \reg{\left(\frac{R}{I(\cm)}\right)},
	$$
	and using \autoref{remark_reg_cycle_formula} we get the equality 
	$$
	\reg{\left(\frac{R}{I(\cn \cup \cm)}\right)} = \Big\lfloor \frac{n-2}{3} \Big\rfloor + \Big\lfloor \frac{m-2}{3} \Big\rfloor + 2.
	$$
	Consider the graph $H= \{x_2,x_n\} \cup P_{n-3} \cup \{y_2,y_m\} \cup P_{m-3}$, where $\{x_2,x_n\}$ and $\{y_2,y_m\}$ are incident vertices of graph $ \cncm $ to $ x_1 $ and $ y_1 $ respectively (see \autoref{example_first_dumbbells}). Moreover,  $ P_{n-3} $ is the path with vertices $ x_3,\ldots,x_{n-1} $ and $ P_{m-3} $ is the path with vertices $ y_3,\ldots,y_{m-1} $. It is easy to see that $ \reg{I(H)}=\reg{I(\cn \cup \cm) : e} $. 
	Hence from  \autoref{nu_path}, \autoref{Kalai-Meshulam} and again \cite[Lemma 3.2]{HOA_TAM}  we get
	$$
	\reg{\left(\frac{R}{I(\cn \cup \cm):e}\right)} + 1 = \Big\lfloor \frac{n-2}{3} \Big\rfloor + \Big\lfloor \frac{m-2}{3} \Big\rfloor + 1,
	$$		
	This proves the proposition. 
\end{proof}
\end{prop}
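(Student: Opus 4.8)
The lower bound is immediate from the facts recalled above: \autoref{lower_bound_Katzman} gives $\reg{I(\cncm)}\ge\nu(\cncm)+1$, and since $\reg{R/J}=\reg{J}-1$ for any graded ideal $J$ this yields $\reg{R/I(\cncm)}\ge\nu(\cncm)$. All the content is therefore in the upper bound, and the plan is to cut the unique bridge edge of the graph and reduce to pieces whose regularity is already known.

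Write $G=\cncm$, with $\cn$ on the vertices $x_1,\dots,x_n$ and $\cm$ on the vertices $y_1,\dots,y_m$; since $l=2$ the connecting path is exactly the edge $e=x_1y_1$. First I would apply \autoref{short exact sequence}$(iii)$ to $e$ and subtract $1$ from both sides (using $\reg{R/J}=\reg{J}-1$), obtaining
$$
\reg{R/I(G)} \le \max\big\{ 1, \reg{R/I(G\setminus e)}, \reg{R/I(G_e)}+1 \big\},
$$
where $G\setminus e=\cn\cup\cm$ is the disjoint union of the two cycles and $G_e$ is the induced subgraph of $G$ on $V(G)\setminus N[e]$.

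Next I would evaluate the two nontrivial terms of the maximum. Since $\cn$ and $\cm$ lie on disjoint vertex sets, regularity is additive over these components (\cite[Lemma 3.2]{HOA_TAM}), so $\reg{R/I(G\setminus e)}=\reg{R/I(\cn)}+\reg{R/I(\cm)}$, which by \autoref{regularity Cn}, in the form recorded in \autoref{remark_reg_cycle_formula}, equals $\lfloor\frac{n-2}{3}\rfloor+\lfloor\frac{m-2}{3}\rfloor+2$. For the other term, $N[e]=N[x_1]\cup N[y_1]=\{x_1,x_2,x_n,y_1\}\cup\{y_1,y_2,y_m,x_1\}$, so $G_e$ is the disjoint union of the paths $P_{n-3}$ (on $x_3,\dots,x_{n-1}$) and $P_{m-3}$ (on $y_3,\dots,y_{m-1}$); using additivity again together with \autoref{reg_Forests} and \autoref{nu_path} — equivalently, \autoref{Kalai-Meshulam} applied to the decomposition $(I(\cn\cup\cm):e)=(I(\cn):x_1)+(I(\cm):y_1)$ — one gets $\reg{R/I(G_e)}=\nu(P_{n-3})+\nu(P_{m-3})=\lfloor\frac{n-2}{3}\rfloor+\lfloor\frac{m-2}{3}\rfloor$. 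Feeding both values into the displayed maximum, the dominant term is $\lfloor\frac{n-2}{3}\rfloor+\lfloor\frac{m-2}{3}\rfloor+2$, which is exactly the claimed bound.

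I do not expect a genuine obstacle here. Once one decides to delete the bridge edge, everything reduces to the regularity of disjoint unions of cycles and paths, all of which is in hand. The only points requiring a little care are the combinatorial identification of $G_e$ (equivalently, checking that polarizing $(I(\cn\cup\cm):e)$ produces the edge ideal of $P_{n-3}\cup P_{m-3}$, with no leftover partial edge of a cycle contributing a spurious extra variable) and the routine floor-function bookkeeping that makes the two estimates collapse onto the single value $\lfloor\frac{n-2}{3}\rfloor+\lfloor\frac{m-2}{3}\rfloor+2$.
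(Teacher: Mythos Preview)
Your proof is correct and follows essentially the same route as the paper: delete the bridge edge $e=x_1y_1$, apply \autoref{short exact sequence}$(iii)$, and compute each term of the maximum using additivity over disjoint components together with the known regularity of cycles and paths. The only cosmetic difference is that you work directly with $G_e=P_{n-3}\cup P_{m-3}$, whereas the paper phrases the same term as $\reg\big(R/(I(C_n\cup C_m):e)\big)$ and then identifies the associated graph; since the linear generators $x_2,x_n,y_2,y_m$ in the colon ideal do not affect regularity, the two computations coincide.
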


As a result  of the previous proposition, we can prove the following corollary.

\begin{cor}
	\label{several_cases_reg}
	If $n\equiv 0, 1 \;(\Mod 3)$ and $m\equiv 0, 1 \;(\Mod 3)$, then
	$$
	\reg{\left(\frac{R}{I(\cncm)}\right)} = \nu(\cncm) = \Big\lfloor \frac{n}{3} \Big\rfloor + \Big\lfloor \frac{m}{3} \Big\rfloor
	$$
	\begin{proof}
		We note that $\lfloor \frac{k}{3} \rfloor = \lfloor \frac{k-2}{3}\rfloor+1 $ when $k \equiv 0,1 \;(\Mod 3)$.
		From  \autoref{formula_nu_dumbbell},  in \autoref{eq_2} the lower and upper bound coincide for these cases. 
		So the equality is established.
	\end{proof}
\end{cor}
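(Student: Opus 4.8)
The plan is to deduce the corollary immediately from the two-sided estimate of \autoref{prop_lower_upper_bound_reg} together with the closed formula of \autoref{formula_nu_dumbbell}; no genuinely new argument is needed, only a short arithmetic comparison of the two bounds under the congruence hypotheses.

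First I would record the elementary identity $\lfloor \frac{k}{3} \rfloor = \lfloor \frac{k-2}{3} \rfloor + 1$, which holds precisely when $k \equiv 0,1 \;(\Mod 3)$. Applying it to both $n$ and $m$ rewrites the upper bound appearing in \autoref{prop_lower_upper_bound_reg} as
$$
\Big\lfloor \frac{n-2}{3} \Big\rfloor + \Big\lfloor \frac{m-2}{3} \Big\rfloor + 2 = \Big\lfloor \frac{n}{3} \Big\rfloor + \Big\lfloor \frac{m}{3} \Big\rfloor ,
$$
so that $\reg{\left(\frac{R}{I(\cncm)}\right)} \le \lfloor \frac{n}{3} \rfloor + \lfloor \frac{m}{3} \rfloor$.

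Next I would evaluate the left-hand side of that estimate. Since $n, m \equiv 0,1 \;(\Mod 3)$ we have $\xi_3(n) = \xi_3(m) = 1$, and specializing \autoref{formula_nu_dumbbell} to $l = 2$ gives
$$
\nu(\cncm) = \Big\lfloor \frac{n}{3} \Big\rfloor + \Big\lfloor \frac{m}{3} \Big\rfloor + \Big\lfloor \frac{2 - 1 - 1 + 1}{3} \Big\rfloor = \Big\lfloor \frac{n}{3} \Big\rfloor + \Big\lfloor \frac{m}{3} \Big\rfloor .
$$
Hence the lower bound $\nu(\cncm)$ and the upper bound of \autoref{prop_lower_upper_bound_reg} coincide, which forces $\reg{\left(\frac{R}{I(\cncm)}\right)} = \lfloor \frac{n}{3} \rfloor + \lfloor \frac{m}{3} \rfloor = \nu(\cncm)$, as claimed.

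I do not anticipate any real obstacle here: all the substance is already carried by \autoref{prop_lower_upper_bound_reg} (which itself rests on \autoref{short exact sequence}$(iii)$, the additivity of regularity over a disjoint union of cycles, and \autoref{Kalai-Meshulam}). The only point to handle with care is the bookkeeping of the floor functions, namely checking that the hypothesis $n, m \equiv 0,1 \;(\Mod 3)$ is exactly what makes the two bounds meet.
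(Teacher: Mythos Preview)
Your proposal is correct and follows exactly the paper's own argument: both use the floor identity $\lfloor \frac{k}{3} \rfloor = \lfloor \frac{k-2}{3}\rfloor+1$ for $k \equiv 0,1 \;(\Mod 3)$ together with \autoref{formula_nu_dumbbell} to make the two bounds in \autoref{prop_lower_upper_bound_reg} coincide. Your write-up is simply a more explicit version of the same computation.
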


Now we have only three more cases left to deal with, i.e., the case $n \equiv 0 \; (\Mod  3), \; m \equiv 2 \;(\Mod  3)$, the case $n \equiv 1 \; (\Mod  3), \; m \equiv 2 \;(\Mod  3)$, and the case $n \equiv 2 \; (\Mod  3), \; m \equiv 2 \;(\Mod  3)$.

\begin{lem}
	\label{reg_case_two_two}
	 If $n \equiv 2 \; (\Mod  3)$ and $m \equiv 2 \;(\Mod  3)$, then 
	 $$
	 \reg{\left(\frac{R}{I(\cncm)}\right)} = \nu(\cncm) = \Big\lfloor \frac{n}{3} \Big\rfloor + \Big\lfloor \frac{m}{3} \Big\rfloor + 1. 
	 $$
	  \begin{proof}
	 	We shall divide the graph into three subgraphs $H_1$, $H_2$ and $H_3$. 
	 	We make $H_1=\cn \setminus \{x_1\}$ and $H_2 = \cm \setminus \{y_1\}$. 
	 	The subgraph $H_3$ is defined by taking the bridge $e=x_1y_1$ and the neighboring vertices $\{x_2,x_n,y_2,y_m\}$, i.e. the graph below.	
	 	\begin{center}
	 		\definecolor{ududff}{rgb}{0.30196078431372547,0.30196078431372547,1.}
	 		\begin{tikzpicture}[line cap=round,line join=round,>=triangle 45,x=1.0cm,y=1.0cm]
	 		\clip(2.5,1.5) rectangle (9.,4.5);
	 		\draw [line width=1.2pt] (4.,3.)-- (7.,3.);
	 		\draw [line width=1.2pt] (7.,3.)-- (8.,4.);
	 		\draw [line width=1.2pt] (7.,3.)-- (8.,2.);
	 		\draw [line width=1.2pt] (4.,3.)-- (3.,2.);
	 		\draw [line width=1.2pt] (4.,3.)-- (3.,4.);
	 		\begin{scriptsize}
	 		\draw [fill=ududff] (4.,3.) circle (2.0pt);
	 		\draw[color=ududff] (4.15,3.42) node {$x_1$};
	 		\draw [fill=ududff] (7.,3.) circle (2.0pt);
	 		\draw[color=ududff] (6.89,3.52) node {$y_1$};
	 		\draw [fill=ududff] (8.,4.) circle (2.0pt);
	 		\draw[color=ududff] (8.19,4.38) node {$y_2$};
	 		\draw [fill=ududff] (8.,2.) circle (2.0pt);
	 		\draw[color=ududff] (8.19,2.38) node {$y_m$};
	 		\draw [fill=ududff] (3.,2.) circle (2.0pt);
	 		\draw[color=ududff] (3.33,2.) node {$x_n$};
	 		\draw [fill=ududff] (3.,4.) circle (2.0pt);
	 		\draw[color=ududff] (3.19,4.38) node {$x_2$};
	 		\end{scriptsize}
	 		\end{tikzpicture}
	 	\end{center}
	 	Using this decomposition and \autoref{Kalai-Meshulam} we get the inequality 
	 	$$
	 	\reg{R/I(\cncm)} \le \reg{(R/I(H_1))} + \reg{(R/I(H_2))} + \reg{(R/I(H_3))}, 
	 	$$
	 	then have that $H_1$ and $H_2$ are paths of length $n-1$ and $m-1$ respectively, and using \autoref{reg_Forests} we get 
	 	$$
	 	\reg{R/I(\cncm)} \le \Big\lfloor \frac{n}{3} \Big\rfloor + \Big\lfloor \frac{m}{3} \Big\rfloor + 1.  
	 	$$
	 	Finally, in the present case $n \equiv 2 \; (\Mod  3)$ and $m \equiv 2 \; (\Mod 3)$ we have the equality $\nu(\cncm) = \lfloor \frac{n}{3} \rfloor + \lfloor \frac{m}{3} \rfloor +  1$, and the proof follows from \autoref{lower_bound_Katzman}.
	 \end{proof} 	
\end{lem}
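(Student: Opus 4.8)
The plan is to squeeze $\reg{(R/I(\cncm))}$ between a lower and an upper bound that coincide precisely under the hypothesis $n\equiv m\equiv 2\;(\Mod 3)$. For the lower bound, \autoref{lower_bound_Katzman} gives $\reg{\edge{\cncm}}\ge\nu(\cncm)+1$, hence $\reg{(R/I(\cncm))}\ge\nu(\cncm)$; and since $\xi_3(n)=\xi_3(m)=0$ here, \autoref{formula_nu_dumbbell} evaluates this to $\reg{(R/I(\cncm))}\ge\lfloor\frac{n}{3}\rfloor+\lfloor\frac{m}{3}\rfloor+1$. Thus the whole content of the lemma is the reverse inequality $\reg{(R/I(\cncm))}\le\lfloor\frac{n}{3}\rfloor+\lfloor\frac{m}{3}\rfloor+1$.

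For this I would cut the edge set of $\cncm$ into three pieces and apply the subadditivity in \autoref{Kalai-Meshulam}. Writing $\cn$ and $\cm$ on vertices $\{x_1,\dots,x_n\}$ and $\{y_1,\dots,y_m\}$ with bridge $e=x_1y_1$, put $H_1:=\cn\setminus\{x_1\}$ (the path $P_{n-1}$ on $x_2,\dots,x_n$), $H_2:=\cm\setminus\{y_1\}$ (the path $P_{m-1}$ on $y_2,\dots,y_m$), and let $H_3$ be the graph on $\{x_1,y_1,x_2,x_n,y_2,y_m\}$ whose edges are $x_1x_2$, $x_1x_n$, $x_1y_1$, $y_1y_2$, $y_1y_m$. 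One checks that each edge of $\cncm$ lies in exactly one of $H_1$, $H_2$, $H_3$: the two cycle-edges at $x_1$, the two cycle-edges at $y_1$, and the bridge constitute $E(H_3)$, while what remains of $\cn$ and $\cm$ are the paths $H_1$ and $H_2$. Hence $I(\cncm)=I(H_1)+I(H_2)+I(H_3)$, and \autoref{Kalai-Meshulam} yields
$$
\reg{(R/I(\cncm))} \le \reg{(R/I(H_1))}+\reg{(R/I(H_2))}+\reg{(R/I(H_3))}.
$$

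It then remains to bound the three summands. Since $H_1\cong P_{n-1}$ and $H_2\cong P_{m-1}$ are forests, \autoref{reg_Forests} together with \autoref{nu_path} give $\reg{(R/I(H_1))}=\nu(P_{n-1})=\lfloor\frac{n}{3}\rfloor$ and $\reg{(R/I(H_2))}=\nu(P_{m-1})=\lfloor\frac{m}{3}\rfloor$. The graph $H_3$ is a tree (two claws sharing the edge $x_1y_1$), and $\nu(H_3)=1$: two disjoint edges of $H_3$ would have to be one edge at $x_1$ and one edge at $y_1$, but then their four endpoints already induce the bridge $x_1y_1$, so no pair of edges of $H_3$ is an induced matching. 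Hence $\reg{(R/I(H_3))}=\nu(H_3)=1$ by \autoref{reg_Forests}, and summing gives $\reg{(R/I(\cncm))}\le\lfloor\frac{n}{3}\rfloor+\lfloor\frac{m}{3}\rfloor+1$; combined with the lower bound, this proves the two asserted equalities.

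The only delicate points here are bookkeeping: verifying that $H_1$, $H_2$, $H_3$ partition $E(\cncm)$ (which is exactly what makes \autoref{Kalai-Meshulam} applicable, namely that $I(\cncm)$ is literally the sum of the three edge ideals) and the little computation $\nu(H_3)=1$. I would also stress that the congruence hypothesis $n\equiv m\equiv 2\;(\Mod 3)$ is used only to make the two bounds coincide: the three-piece decomposition always yields $\reg{(R/I(\cncm))}\le\lfloor\frac{n}{3}\rfloor+\lfloor\frac{m}{3}\rfloor+1$, but for the other residue classes this strictly exceeds $\nu(\cncm)$ and so does not pin down the regularity, which is why those cases are handled separately. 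A variant that avoids the edge-partition check is to apply \autoref{short exact sequence}(ii) at $z_1=x_1$: there $\cncm\setminus x_1=P_{n-1}\sqcup\cm$ and $\cncm\setminus N[x_1]=P_{n-3}\sqcup P_{m-1}$, and feeding \autoref{regularity Cn} and the additivity of regularity on disjoint unions into the resulting inequality reproduces the same upper bound.
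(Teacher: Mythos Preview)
Your proof is correct and follows essentially the same route as the paper: the same three-piece decomposition $H_1=P_{n-1}$, $H_2=P_{m-1}$, $H_3$ the double-claw on $\{x_1,x_2,x_n,y_1,y_2,y_m\}$, the same application of \autoref{Kalai-Meshulam}, and the same lower bound via \autoref{lower_bound_Katzman} and \autoref{formula_nu_dumbbell}. You actually supply more detail than the paper (the explicit verification that $\nu(H_3)=1$ and that the three edge sets partition $E(\cncm)$), and your closing variant via \autoref{short exact sequence}(ii) at $x_1$ is a legitimate alternative that also reaches the same upper bound.
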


\begin{lem}
	\label{reg_case_zero_two} If $n \equiv 0,1 \; (\Mod  3)$ and $m \equiv 2 \;(\Mod  3)$, then
	$$
	\reg{\left(\frac{R}{I(\cncm)}\right)} = \nu(\cncm) + 1 =  \Big\lfloor \frac{n}{3} \Big\rfloor + \Big\lfloor \frac{m}{3} \Big\rfloor + 1.  
	$$
		\begin{proof}
		In this case we will delete the vertex $x_1$ from the cycle $\cn$. We have that $H = (\cncm) \setminus \{x_1\}$ is an induced subgraph of $\cncm$ which is given as the disjoint union of a path of length $n-1$ and a cycle $m$, i.e. $H = P_{n-1} \cup C_m$. 		
		From \autoref{short exact sequence}$(i)$ we get that 
		$$
		\reg{(R/I(\cncm))} \ge \reg{(R/I(H))} = \Big\lfloor \frac{n}{3} \Big\rfloor + \Big\lfloor \frac{m}{3} \Big\rfloor + 1.
		$$
		It follows from \autoref{prop_lower_upper_bound_reg} and the fact that $\lfloor k/3 \rfloor = \lfloor (k-2)/3 \rfloor + 1$ when $k \equiv 0,1 (\Mod 3)$ that 
		$$
		 \reg{R/I(\cncm)}  = \Big\lfloor \frac{n}{3} \Big\rfloor + \Big\lfloor \frac{m}{3} \Big\rfloor + 1.\qedhere
		$$
	\end{proof}
\end{lem}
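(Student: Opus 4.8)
The plan is to squeeze $\reg{(R/I(\cncm))}$ between a lower bound produced from a suitable induced subgraph and the upper bound already established in \autoref{prop_lower_upper_bound_reg}. The subtlety is that the Katzman-type lower bound $\reg{(R/I(\cncm))}\ge \nu(\cncm)=\lfloor n/3\rfloor+\lfloor m/3\rfloor$ (coming from \autoref{lower_bound_Katzman} and \autoref{formula_nu_dumbbell}) falls one short of the claimed value, so the real content is to exhibit an induced subgraph of regularity $\lfloor n/3\rfloor+\lfloor m/3\rfloor+1$. First I would delete from $\cncm$ the vertex $x_1$ lying on the cycle $\cn$ (this is the endpoint $z_1$ of the connecting path $P_2$): removing $x_1$ together with its incident edges detaches $\cm$ completely and turns $\cn$ into the path on $x_2,\dots,x_n$, so the induced subgraph is $H=P_{n-1}\cup \cm$. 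By \autoref{short exact sequence}(i) we then obtain $\reg{(R/I(\cncm))}\ge \reg{(R/I(H))}$.

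Next I would compute $\reg{(R/I(H))}$ exactly. Since the regularity of an edge ideal is additive over the connected components of the graph (\cite[Lemma 3.2]{HOA_TAM}, already invoked in the proof of \autoref{prop_lower_upper_bound_reg}), we get $\reg{(R/I(H))}=\reg{(R/I(P_{n-1}))}+\reg{(R/I(\cm))}$. By \autoref{reg_Forests} the first summand equals $\nu(P_{n-1})=\lfloor n/3\rfloor$, and by \autoref{regularity Cn} together with the hypothesis $m\equiv 2 \;(\Mod 3)$ the second summand equals $\nu(\cm)+1=\lfloor m/3\rfloor+1$. Hence $\reg{(R/I(H))}=\lfloor n/3\rfloor+\lfloor m/3\rfloor+1$, which is exactly the lower bound sought.

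For the matching upper bound I would simply specialize \autoref{prop_lower_upper_bound_reg} to $n\equiv 0,1 \;(\Mod 3)$ and $m\equiv 2 \;(\Mod 3)$: using $\lfloor (n-2)/3\rfloor=\lfloor n/3\rfloor-1$ and $\lfloor (m-2)/3\rfloor=\lfloor m/3\rfloor$, the upper bound $\lfloor (n-2)/3\rfloor+\lfloor (m-2)/3\rfloor+2$ collapses to $\lfloor n/3\rfloor+\lfloor m/3\rfloor+1$. Combining this with the lower bound forces equality, and \autoref{formula_nu_dumbbell} identifies $\lfloor n/3\rfloor+\lfloor m/3\rfloor$ with $\nu(\cncm)$ in this congruence range, yielding $\reg{(R/I(\cncm))}=\nu(\cncm)+1$.

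I do not expect a serious obstacle. The only point that requires care is the choice of which vertex to delete: deleting the vertex $x_1$ of $\cn$ keeps the cycle $\cm$ intact, and it is precisely $\cm$ with $m\equiv 2 \;(\Mod 3)$ that carries the extra $+1$ in its regularity by \autoref{regularity Cn}. Deleting instead a vertex of $\cm$ would leave an induced subgraph of regularity only $\lfloor n/3\rfloor+\lfloor m/3\rfloor$, one short of the upper bound, and the squeeze argument would break down; so the essential observation is that the congruence classes of $n$ and $m$ are exactly what make the sharpened lower bound and the upper bound of \autoref{prop_lower_upper_bound_reg} coincide.
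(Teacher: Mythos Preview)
Your proposal is correct and follows essentially the same approach as the paper: delete $x_1\in C_n$ to obtain the induced subgraph $H=P_{n-1}\cup C_m$, use \autoref{short exact sequence}(i) for the lower bound $\lfloor n/3\rfloor+\lfloor m/3\rfloor+1$, and then specialize the upper bound of \autoref{prop_lower_upper_bound_reg} via the arithmetic identities for $\lfloor(n-2)/3\rfloor$ and $\lfloor(m-2)/3\rfloor$ to force equality. Your write-up is in fact more explicit than the paper's in justifying $\reg(R/I(H))$ via additivity over components and \autoref{regularity Cn}, and your closing remark explaining why one must delete from $C_n$ rather than $C_m$ is a helpful clarification that the paper omits.
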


\begin{theorem}
	\label{formula_reg_dumbbell}
	Let $n,m\ge 3$, then
	$$
	\reg{	\edge{\cncm}}=
	\begin{cases}
	\nu(\cncm)+2 \;  & \text{\normalfont if } n\equiv 0,1 \;(\Mod3), \; m\equiv 2 \;(\Mod3);\\
	\nu(\cncm)+1 \;   & \text{\normalfont otherwise}.\\
	\end{cases}
	$$
	\begin{proof}
		It follows by  \autoref{several_cases_reg}, \autoref{reg_case_two_two} and \autoref{reg_case_zero_two}.
	\end{proof}
\end{theorem}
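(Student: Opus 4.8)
The plan is to reduce the theorem to a residue analysis of $n$ and $m$ modulo $3$. Under the standing convention $n \;\Mod 3 \le m \;\Mod 3$, only three families of residue pairs occur: $n,m\equiv 0,1$; $n\equiv 0,1$ with $m\equiv 2$; and $n,m\equiv 2$ --- the case $n\equiv 2$, $m\equiv 0,1$ being symmetric to the second. In each family I would start from the two-sided estimate
$$
\nu(\cncm)\;\le\;\reg{\left(\frac{R}{I(\cncm)}\right)}\;\le\;\Big\lfloor\frac{n-2}{3}\Big\rfloor+\Big\lfloor\frac{m-2}{3}\Big\rfloor+2
$$
of \autoref{prop_lower_upper_bound_reg}, and then close the gap between the two sides, which is at most one. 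At the very end I would pass back to the ideal via $\reg{I(\cncm)}=\reg{(R/I(\cncm))}+1$, so that the value $\nu$ for $R/I$ becomes $\nu+1$ for $I$, and the value $\nu+1$ for $R/I$ becomes $\nu+2$ for $I$.

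When $n,m\equiv 0,1 \;(\Mod 3)$, the identity $\lfloor k/3\rfloor=\lfloor(k-2)/3\rfloor+1$ collapses the upper bound above to $\lfloor n/3\rfloor+\lfloor m/3\rfloor$, which equals $\nu(\cncm)$ by \autoref{formula_nu_dumbbell}; the two bounds thus coincide, giving $\reg{(R/I(\cncm))}=\nu(\cncm)$ (this is \autoref{several_cases_reg}) and hence the ``otherwise'' value $\nu+1$ for $\reg{I(\cncm)}$. When $n\equiv 0,1$ and $m\equiv 2 \;(\Mod 3)$, the same identity makes the upper bound equal $\nu(\cncm)+1$, so here I would instead push the \emph{lower} bound up by one: deleting the vertex $x_1\in C_n$ leaves the induced subgraph $P_{n-1}\cup C_m$, whose regularity equals $\lfloor n/3\rfloor+\lfloor m/3\rfloor+1$ by \cite[Lemma 3.2]{HOA_TAM} together with \autoref{reg_Forests} and \autoref{remark_reg_cycle_formula}, so \autoref{short exact sequence}(i) forces equality. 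This is \autoref{reg_case_zero_two}, and it gives $\reg{I(\cncm)}=\nu+2$.

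The remaining family $n,m\equiv 2 \;(\Mod 3)$ is the hard one: the crude upper bound again exceeds $\nu(\cncm)$ by one, and there is no obvious induced subgraph realizing the value $\nu+1$, so the extra input has to come from a decomposition rather than a subgraph. Here I would write $I(\cncm)$ as the sum of the edge ideals of three subgraphs: $H_1=\cn\setminus\{x_1\}$ and $H_2=\cm\setminus\{y_1\}$, which are paths, and the double star $H_3$ consisting of the bridge $x_1y_1$ together with its four neighbours $x_2,x_n,y_2,y_m$. By \autoref{Kalai-Meshulam}, $\reg{(R/I(\cncm))}\le\reg{(R/I(H_1))}+\reg{(R/I(H_2))}+\reg{(R/I(H_3))}$; since all three subgraphs are forests, \autoref{reg_Forests} gives $\reg{(R/I(H_1))}+\reg{(R/I(H_2))}=\lfloor n/3\rfloor+\lfloor m/3\rfloor$ and $\reg{(R/I(H_3))}=1$, so the right-hand side equals $\lfloor n/3\rfloor+\lfloor m/3\rfloor+1=\nu(\cncm)$ by \autoref{formula_nu_dumbbell}. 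Combined with the lower bound this forces $\reg{(R/I(\cncm))}=\nu(\cncm)$, which is \autoref{reg_case_two_two}, hence $\reg{I(\cncm)}=\nu+1$; assembling the three families completes the proof.

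I expect this last family to be the main obstacle, because its gap cannot be closed by exhibiting an induced subgraph of the right regularity; one must instead find the correct way to split $I(\cncm)$ into pieces of small regularity whose sum is exactly $\nu(\cncm)$. The two delicate points are verifying that $H_1,H_2,H_3$ genuinely cover all edges of $\cncm$ --- the edges outside $I(H_1)+I(H_2)$ are precisely the two cycle edges at $x_1$, the two at $y_1$, and the bridge, all of which $H_3$ must contain --- and checking that the bridge piece $H_3$ is a tree of regularity $1$, which is what pins the Kalai--Meshulam estimate down to $\nu(\cncm)$ instead of something larger.
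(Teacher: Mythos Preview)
Your proposal is correct and follows essentially the same route as the paper: you have reconstructed, case by case, the proofs of \autoref{several_cases_reg}, \autoref{reg_case_zero_two}, and \autoref{reg_case_two_two}, including the specific induced subgraph $P_{n-1}\cup C_m$ for the $(0/1,2)$ case and the Kalai--Meshulam decomposition into $H_1,H_2,H_3$ for the $(2,2)$ case. The only difference is organizational---the paper proves these three cases as separate auxiliary results and then cites them, whereas you have folded their arguments directly into the proof.
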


\subsection{The case $l=3$} \hspace{\fill}\\
Throughout this subsection, we consider the dumbbell graph $ \cnlcm{3} $.

\begin{prop}\label{reg bound of cn p_3 cm}
	Let $n,m\ge 3$, then
	\begin{enumerate}[(i)]
		\item $\reg{I(\cnlcm{3})} \le \nu(\cnlcm{3}) + 2$, \quad if $n, m \equiv 2 \;(\Mod 3)$;
		\item $\reg{I(\cnlcm{3})} = \nu(\cnlcm{3}) + 1$, \quad otherwise.
	\end{enumerate}
	\begin{proof}
		Let $E(P_3)=\{e,e^{\prime}\}$ be the set of the edges of $P_3$, where $e=z_1z_2$ and $e^{\prime}=z_2z_3$ are connected to $\cn$ and $\cm$, respectively. 
		Since $\reg{\left(I(\cn \cup (e^{\prime}\cdot \cm)):e\right)}=\reg{\left(I(P_{n-3} \cup P_{m-1})\right)}$, then \autoref{short exact sequence}$(iii)$ yields the inequality
		$$
		\text{reg}\left(\frac{R}{I(\cnlcm{3})}\right) \le \max\Big\{\reg{\left(  \frac{R}{I(P_{n-3} \cup P_{m-1})} \right)} + 1, \reg{\left(\frac{R}{I(\cn \cup (e^{\prime}\cdot\cm)}\right)} \Big\}.
		$$
		
		From \autoref{nu_path_cycle} and \cite[Lemma 3.2]{REG_UNICYCLIC_GRAPH} follows that $\reg{\left(I(e^{\prime}\cdot\cm)\right)} = \lfloor \frac{m}{3} \rfloor + \lfloor \frac{3 - \xi_3(m)}{3} \rfloor+1$.
		Thus, using \autoref{remark_reg_cycle_formula}, \cite[Lemma 3.2]{HOA_TAM}
		and \autoref{reg_Forests}, we get
		$
		\text{reg}\left(\frac{R}{I(\cnlcm{3})}\right) \le \max \Big\{ \Big\lfloor \frac{n-2}{3} \Big\rfloor + \Big\lfloor \frac{m}{3} \Big\rfloor + 1,  \Big\lfloor \frac{n-2}{3} \Big\rfloor + 1 + \Big\lfloor \frac{m}{3} \Big\rfloor + \Big\lfloor \frac{3-\xi_3(m)}{3} \Big\rfloor\Big\}.
		$
		
		On the other hand, from \autoref{formula_nu_dumbbell} we have that $\nu(\cnlcm{3}) = \lfloor \frac{n}{3} \rfloor + \lfloor \frac{m}{3} \rfloor + \lfloor \frac{4-\xi_3(n)-\xi_3(m)}{3} \rfloor$.
		Therefore, we can check that $\reg{\left(\frac{R}{I(\cnlcm{3})}\right)} \le \nu(\cnlcm{3}) + 1$ when $n, m \equiv 2 \;(\text{mod } 3)$, and that $\reg{\left(\frac{R}{I(\cnlcm{3})}\right)} = \nu(\cnlcm{3})$ in all the remaining cases.
	\end{proof}
\end{prop}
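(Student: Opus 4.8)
The plan is to get an upper bound on $\reg{\edge{\cnlcm{3}}}$ by deleting an edge of the bridge $P_3$ and then to match it against $\nu(\cnlcm{3})$ from \autoref{formula_nu_dumbbell} and against the lower bound $\reg{\edge{G}}\ge\nu(G)+1$ from \autoref{lower_bound_Katzman}. Write $E(P_3)=\{e,e'\}$ with $e=z_1z_2$ incident to $\cn$ and $e'=z_2z_3$ incident to $\cm$. The first thing is to identify the two auxiliary graphs: $(\cnlcm{3})\setminus e$ is the disjoint union $\cn\cup(e'\cdot\cm)$, where $e'\cdot\cm$ is the cycle $\cm$ with the single pendant vertex $z_2$ attached at $y_1$; and the induced subgraph on $V\setminus N[e]$, with $N[e]=\{x_1,x_2,x_n,z_2,y_1\}$, is $P_{n-3}\cup P_{m-1}$, the disjoint union of the path on $x_3,\dots,x_{n-1}$ and the path on $y_2,\dots,y_m$. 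Passing to $R/I$, \autoref{short exact sequence}$(iii)$ (its constant term being dominated) gives
$$
\reg{\left(\frac{R}{\edge{\cnlcm{3}}}\right)}\le\max\left\{\reg{\left(\frac{R}{\edge{P_{n-3}\cup P_{m-1}}}\right)}+1,\ \reg{\left(\frac{R}{\edge{\cn\cup(e'\cdot\cm)}}\right)}\right\}.
$$

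Next I would evaluate the two terms. Since regularity is additive over disjoint unions by \cite[Lemma 3.2]{HOA_TAM}, \autoref{reg_Forests} and \autoref{nu_path} give $\reg{(R/\edge{P_{n-3}\cup P_{m-1}})}=\lfloor(n-2)/3\rfloor+\lfloor m/3\rfloor$, while \autoref{remark_reg_cycle_formula} together with the equality $\reg{\edge{e'\cdot\cm}}=\lfloor m/3\rfloor+\lfloor(3-\xi_3(m))/3\rfloor+1$ --- a consequence of \autoref{nu_path_cycle} and \cite[Lemma 3.2]{REG_UNICYCLIC_GRAPH} --- give $\reg{(R/\edge{\cn\cup(e'\cdot\cm)})}=\lfloor(n-2)/3\rfloor+\lfloor m/3\rfloor+\lfloor(3-\xi_3(m))/3\rfloor+1$. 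Since $\lfloor(3-\xi_3(m))/3\rfloor\ge0$ the second term dominates, so
$$
\reg{\left(\frac{R}{\edge{\cnlcm{3}}}\right)}\le\left\lfloor\frac{n-2}{3}\right\rfloor+\left\lfloor\frac{m}{3}\right\rfloor+\left\lfloor\frac{3-\xi_3(m)}{3}\right\rfloor+1.
$$

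The last step is the comparison. Using the elementary identities $\lfloor k/3\rfloor=\lfloor(k-2)/3\rfloor+\xi_3(k)$ and $\lfloor(3-\xi_3(m))/3\rfloor=1-\xi_3(m)$, the right-hand side becomes $\lfloor n/3\rfloor+\lfloor m/3\rfloor+2-\xi_3(n)-\xi_3(m)$, whereas $\nu(\cnlcm{3})=\lfloor n/3\rfloor+\lfloor m/3\rfloor+\lfloor(4-\xi_3(n)-\xi_3(m))/3\rfloor$ by \autoref{formula_nu_dumbbell}. Writing $s=\xi_3(n)+\xi_3(m)$, one has $\lfloor(4-s)/3\rfloor=1$ for $s\le1$ and $0$ for $s=2$, so the bound reads $\reg{(R/\edge{\cnlcm{3}})}\le\nu(\cnlcm{3})+1$ exactly when $s=0$, i.e.\ $n\equiv m\equiv2\;(\Mod3)$, and $\reg{(R/\edge{\cnlcm{3}})}\le\nu(\cnlcm{3})$ otherwise; everything depends only on $s$, so the standing convention $n\;\Mod3\le m\;\Mod3$ plays no role here. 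Translating from $R/I$ to $I$, the first case gives (i); in every other case $\reg{\edge{\cnlcm{3}}}\le\nu(\cnlcm{3})+1$, which together with \autoref{lower_bound_Katzman} forces the equality in (ii). I expect no real obstacle: the work is the floor bookkeeping across the three residue classes, and the only choice to get right is to split off an edge of the bridge rather than a vertex --- a vertex split at $z_2$ gives merely $\reg{\edge{\cnlcm{3}}}\le\nu(\cnlcm{3})+2$ in the cases $n\equiv m\equiv0,1\;(\Mod3)$, too weak for (ii).
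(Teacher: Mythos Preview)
Your proof is correct and follows essentially the same approach as the paper: delete the bridge edge $e=z_1z_2$, apply \autoref{short exact sequence}$(iii)$, evaluate the two resulting terms via additivity on disjoint unions and the known regularity of $e'\cdot C_m$, and compare against $\nu(\cnlcm{3})$ from \autoref{formula_nu_dumbbell}. Your write-up is in fact more explicit than the paper's at the final step, where the paper simply says ``we can check'' while you carry out the floor bookkeeping via the identities $\lfloor k/3\rfloor=\lfloor(k-2)/3\rfloor+\xi_3(k)$ and $\lfloor(3-\xi_3(m))/3\rfloor=1-\xi_3(m)$ and the parameter $s=\xi_3(n)+\xi_3(m)$.
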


\begin{theorem}\label{reg of cn p3 cm}
	Let $n,m\ge 3$, then
	$$
	\reg{ I(\cnlcm{3}) } = \begin{cases}
	\nu(\cnlcm{3})+2 \qquad \text{\normalfont if } n , m\equiv 2  \;(\Mod 3),\\
	\nu(\cnlcm{3}) +1 \qquad  \text{\normalfont otherwise}.
	\end{cases}
	$$ 
		\begin{proof}
		Using \autoref{reg bound of cn p_3 cm}, then we only need to prove that $\reg{I(\cnlcm{3})} \ge \nu(\cnlcm{3})+2$ in the case $n, m \equiv 2 \;(\text{mod } 3)$. 
		Hence, we assume $n, m \equiv 2 \;(\text{mod } 3)$.
		Let $z_2$ be the middle vertex of $\cnlcm{3}$. By deleting $z_2$ we see that $H=(\cnlcm{3}) \setminus z_2=\cn \cup \cm$ is an induced subgraph of $\cnlcm{3}$. From \autoref{regularity Cn} and \cite[Lemma 3.2]{HOA_TAM}, we have that 
		$$
		\reg{I(H)} = \reg{I(\cn)} + \reg{I(\cm)} - 1 = \nu(\cn) + \nu(\cm) + 3.
		$$
		Since $\nu(\cnlcm{3}) = \nu(C_n) + \nu(C_m) + 1$, then using \autoref{short exact sequence}$(i)$	we get 
		$$
		\reg{I(\cnlcm{3})} \ge \reg{I(H)} = \nu(\cnlcm{3}) + 2.\qedhere
		$$	
	\end{proof}		
\end{theorem}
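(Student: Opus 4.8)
The plan is to take the upper bounds from \autoref{reg bound of cn p_3 cm} as given and to supply a matching lower bound in the single case that is not yet settled, namely $n \equiv m \equiv 2 \;(\Mod 3)$. In all other cases \autoref{reg bound of cn p_3 cm}(ii) already asserts the equality $\reg{I(\cnlcm{3})} = \nu(\cnlcm{3}) + 1$, so there is nothing to do; and in the case $n \equiv m \equiv 2 \;(\Mod 3)$ the same proposition gives $\reg{I(\cnlcm{3})} \le \nu(\cnlcm{3}) + 2$, so it suffices to prove $\reg{I(\cnlcm{3})} \ge \nu(\cnlcm{3}) + 2$ under that hypothesis.

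First I would isolate a well-chosen induced subgraph. Write the connecting path as $P_3 = z_1 z_2 z_3$ with $z_1 = x_1 \in C_n$ and $z_3 = y_1 \in C_m$, so that $z_2$ is a genuinely internal vertex of the bridge (this is exactly where $l = 3$ rather than $l \le 2$ is used). Deleting $z_2$ severs every path between the two cycles, so $H := (\cnlcm{3}) \setminus z_2 = C_n \cup C_m$, the disjoint union. By \autoref{short exact sequence}(i), $\reg{I(\cnlcm{3})} \ge \reg{I(H)}$. Next I would compute $\reg{I(H)}$ via \cite[Lemma 3.2]{HOA_TAM}, which gives $\reg(R/I(C_n \cup C_m)) = \reg(R/I(C_n)) + \reg(R/I(C_m))$, hence $\reg{I(H)} = \reg{I(C_n)} + \reg{I(C_m)} - 1$. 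Since $n, m \equiv 2 \;(\Mod 3)$, \autoref{regularity Cn} yields $\reg{I(C_n)} = \nu(C_n) + 2$ and $\reg{I(C_m)} = \nu(C_m) + 2$, so $\reg{I(H)} = \nu(C_n) + \nu(C_m) + 3$.

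Finally I would close the loop with the induced matching count from \autoref{formula_nu_dumbbell}: with $l = 3$ and $\xi_3(n) = \xi_3(m) = 0$ one gets $\nu(\cnlcm{3}) = \lfloor n/3 \rfloor + \lfloor m/3 \rfloor + \lfloor 4/3 \rfloor = \nu(C_n) + \nu(C_m) + 1$, precisely in this residue case. Combining, $\reg{I(\cnlcm{3})} \ge \reg{I(H)} = \nu(\cnlcm{3}) + 2$, which together with the upper bound finishes the proof. The only real obstacle lies upstream, in \autoref{reg bound of cn p_3 cm}, whose proof requires the careful short-exact-sequence bookkeeping for $G \setminus e$ and $G_e$; for the theorem itself the sole delicate points are verifying that $z_2$ is internal (so the deletion disconnects the graph) and the elementary check that the induced matching numbers align exactly in the $(2,2)$ case.
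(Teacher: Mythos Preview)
Your proof is correct and follows essentially the same route as the paper: reduce to the $n\equiv m\equiv 2\;(\Mod 3)$ case via \autoref{reg bound of cn p_3 cm}, delete the middle vertex $z_2$ to obtain the induced subgraph $H=C_n\cup C_m$, compute $\reg I(H)=\nu(C_n)+\nu(C_m)+3$ via \autoref{regularity Cn} and \cite[Lemma 3.2]{HOA_TAM}, and match this against $\nu(\cnlcm{3})=\nu(C_n)+\nu(C_m)+1$. The only addition is your explicit invocation of \autoref{formula_nu_dumbbell} for that last induced matching count, which the paper leaves implicit.
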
 

\section{Combinatorial characterization of $\reg{(I(G))}$ in terms of $\nu(G)$}\label{reg_bicyclic_graph_section}
Let $G$ be a general bicyclic graph, then its decycling number is smaller or equal than $2$, and so from \autoref{lower_bound_Katzman} and \autoref{Upper_bound_reg_decyc}, we get 
$$
\nu(G) + 1 \le \reg{I(G)} \le \nu(G) + 3.
$$
\begin{exmp}
	\label{examp_reg_nu+3}
	The following graph $G$
	\begin{center}
			\definecolor{ududff}{rgb}{0.30196078431372547,0.30196078431372547,1.}
			\begin{tikzpicture}[line cap=round,line join=round,>=triangle 45,x=1.0cm,y=1.0cm]
			\clip(1.5,1.5) rectangle (10.5,5.5);
			\draw [line width=1.2pt] (2.,4.)-- (2.,2.);
			\draw [line width=1.2pt] (2.,2.)-- (4.,2.);
			\draw [line width=1.2pt] (4.,2.)-- (5.,3.);
			\draw [line width=1.2pt] (5.,3.)-- (4.,4.);
			\draw [line width=1.2pt] (4.,4.)-- (2.,4.);
			\draw [line width=1.2pt] (5.,3.)-- (6.,3.);
			\draw [line width=1.2pt] (6.,3.)-- (7.,3.);
			\draw [line width=1.2pt] (6.,3.)-- (6.,4.);
			\draw [line width=1.2pt] (6.,4.)-- (6.,5.);
			\draw [line width=1.2pt] (7.,3.)-- (8.,4.);
			\draw [line width=1.2pt] (8.,4.)-- (10.,4.);
			\draw [line width=1.2pt] (10.,4.)-- (10.,2.);
			\draw [line width=1.2pt] (10.,2.)-- (8.,2.);
			\draw [line width=1.2pt] (8.,2.)-- (7.,3.);
			\begin{scriptsize}
			\draw [fill=ududff] (2.,4.) circle (2.0pt);
			\draw[color=ududff] (2.19,4.38) node {$x_3$};
			\draw [fill=ududff] (2.,2.) circle (2.0pt);
			\draw[color=ududff] (2.19,2.38) node {$x_4$};
			\draw [fill=ududff] (4.,2.) circle (2.0pt);
			\draw[color=ududff] (3.81,2.46) node {$x_5$};
			\draw [fill=ududff] (5.,3.) circle (2.0pt);
			\draw[color=ududff] (4.47,3.26) node {$x_1$};
			\draw [fill=ududff] (4.,4.) circle (2.0pt);
			\draw[color=ududff] (4.19,4.38) node {$x_2$};
			\draw [fill=ududff] (6.,3.) circle (2.0pt);
			\draw[color=ududff] (5.93,2.74) node {$z_1$};
			\draw [fill=ududff] (7.,3.) circle (2.0pt);
			\draw[color=ududff] (7.55,3.16) node {$y_1$};
			\draw [fill=ududff] (6.,4.) circle (2.0pt);
			\draw[color=ududff] (6.19,4.38) node {$z_2$};
			\draw [fill=ududff] (6.,5.) circle (2.0pt);
			\draw[color=ududff] (6.19,5.38) node {$z_3$};
			\draw [fill=ududff] (8.,4.) circle (2.0pt);
			\draw[color=ududff] (8.15,4.44) node {$y_2$};
			\draw [fill=ududff] (10.,4.) circle (2.0pt);
			\draw[color=ududff] (10.19,4.38) node {$y_3$};
			\draw [fill=ududff] (10.,2.) circle (2.0pt);
			\draw[color=ududff] (10.19,2.38) node {$y_4$};
			\draw [fill=ududff] (8.,2.) circle (2.0pt);
			\draw[color=ududff] (8.19,2.38) node {$y_5$};
			\end{scriptsize}
			\end{tikzpicture}
	\end{center}
	has regularity $\reg{I(G)}=6$ and induced matching number $\nu(G)=3$.
\end{exmp}

In this section, we give a combinatorial characterization of the bicyclic graphs with regularity $\nu(G)+1$, $\nu(G)+2$ and $\nu(G)+3$.
For the rest of the paper, we shall use the term ``dumbbell'' of the bicyclic graph $G$, and it denotes the unique subg
raph of $G$ of the form $\cnplcm$.
The theorem below contains the characterization that we found.

\begin{theorem}
	\label{full_characterization}
	Let $G$ be a bicyclic graph with dumbbell $\cnplcm$.
	The following statements hold.
	\begin{enumerate}[(I)]
		\item Let $ n , m\equiv 0,1  \;(\Mod 3)$, then  $\reg{I(G)} = \nu(G) + 1$.
		\item Let $ n \equiv 0,1  \;(\Mod 3)$ and $m\equiv 2  \;(\Mod 3)$, then  
		$$\nu(G) + 1\le\reg{I(G)} \le \nu(G) + 2,$$ 
		and  $\reg{I(G)} = \nu(G) + 2$ if and only if $\nu(G) = \nu(G\setminus \Gamma_G(C_m))$.
		\item Let $n,m \equiv 2 \; (\Mod3)$ and $l \ge 3$, then  $\nu(G) + 1\le\reg{I(G)} \le \nu(G) + 3$.
		Moreover:
		\begin{enumerate}[(i)]
			\item  $\reg{I(G)} = \nu(G) + 3$   if and only if $\nu\left(G\setminus \Gamma_G(C_n\cup C_m)\right) = \nu(G)$.
			\item  $\reg{I(G)} = \nu(G) + 1$  if and only if the following conditions hold:
			\begin{enumerate}[(a)]
				\item $\nu(G)-\nu(G\setminus \Gamma_G(C_n\cup C_m)) > 1$;
				\item $\nu(G) > \nu(G \setminus \Gamma_G(C_n))$;
				\item $\nu(G) > \nu(G \setminus \Gamma_G(C_m))$.
			\end{enumerate}
		\end{enumerate}
		\item Let $n,m \equiv 2 \; (\Mod3)$ and $l \le 2$, then  $\nu(G) + 1\le\reg{I(G)} \le \nu(G) + 2$.
		If $x$ is an edge on  $P_l$ and  $\LozGraph$ be the Lozin transformation of $G$ with respect to $x$, then  $\reg{I(G)}=\nu(G)+1$  if and only if the following conditions are satisfied: 
		\begin{enumerate}[(a)]
			\item $\nu(\LozGraph)-\nu(\LozGraph\setminus \Gamma_{\LozGraph}(C_n\cup C_m)) > 1$;
			\item $\nu(\LozGraph) > \nu(\LozGraph \setminus \Gamma_{\LozGraph}(C_n))$;
			\item $\nu(\LozGraph) > \nu(\LozGraph \setminus \Gamma_{\LozGraph}(C_m))$.
		\end{enumerate}
	\end{enumerate}
	\begin{proof}
		Statement $\textit{(I)}$ follows from \autoref{general_prop_bicyclic}. 
		In \autoref{thm_only_one_bad_cycle}, $\textit{(II)}$ is proved.
		By \autoref{thm_case_nu+3} and \autoref{charact_thm_nu+1_in_bad_case}, we get $\textit{(III)}$. 
		Finally, from \autoref{cor_case_l<=2}, we obtain $\textit{(IV)}$.
	\end{proof}
\end{theorem}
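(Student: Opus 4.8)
The plan is to reduce the whole statement to the dumbbell computations of \autoref{reg_dumbbell} by an induction that peels off, one at a time, the pendant trees attached to the dumbbell $\cnplcm$ sitting inside $G$. Writing $G$ as the dumbbell with a forest glued along some of its vertices, I would pick a vertex $x$ that is extremal in one of these pendant trees (so $x$ lies in a leaf edge not meeting the dumbbell) and apply \autoref{short exact sequence}$(ii)$ to the neighbour of $x$, using \autoref{short exact sequence}$(i)$ and \autoref{lower_bound_Katzman} for the matching lower bound. The key claim to establish by this induction is that the \emph{excess} $\reg{I(G)}-\nu(G)$ is controlled entirely by the dumbbell together with a short list of easily described induced subgraphs of $G$, namely those obtained by cutting $G$ at the vertices where $P_l$ meets $C_n$ and $C_m$; the sets $\Gamma_G(C_n)$, $\Gamma_G(C_m)$, $\Gamma_G(C_n\cup C_m)$ should record exactly which vertices one must remove for such a cut to decouple a cycle from the rest of $G$.

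For the upper bounds I would first invoke \autoref{Upper_bound_reg_decyc} with $\nabla(G)\le 2$ to get $\reg{I(G)}\le\nu(G)+3$, and then refine it case by case. Deleting a vertex of the bridge $P_l$, or one of the two bridge-adjacent vertices of a cycle, splits $G$ — up to passing to an induced subgraph — into a disjoint union of a forest and one or two ``cycle-with-trees'' pieces, so \autoref{short exact sequence}$(iii)$ and \autoref{Kalai-Meshulam}, combined with \autoref{reg_Forests}, the unicyclic regularity formula of \cite{REG_UNICYCLIC_GRAPH}, and the additivity of regularity over connected components \cite[Lemma 3.2]{HOA_TAM}, yield the bound $\nu(G)+1$ in case $(I)$ and $\nu(G)+2$ in cases $(II)$ and $(IV)$. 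For the lower bounds I would exhibit, whenever the stated combinatorial condition holds, an induced subgraph $H\subseteq G$ that is a disjoint union of forests and of copies of $C_n$ and/or $C_m$ with $n,m\equiv 2\pmod 3$; then \cite[Lemma 3.2]{HOA_TAM}, \autoref{regularity Cn} and \autoref{reg_Forests} force $\reg{I(H)}$ up to $\nu(G)+2$ (case $(II)$) or $\nu(G)+3$ (case $(III)(i)$), the numerical input being $\nu(G)=\nu(H)$, which is exactly what the hypothesis $\nu(G)=\nu(G\setminus\Gamma_G(C_m))$, respectively $\nu(G)=\nu(G\setminus\Gamma_G(C_n\cup C_m))$, provides. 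Conversely, when such a condition fails I would argue that some maximal induced matching can be chosen to use an edge of the relevant cycle, which lets the vertex-deletion recursion run all the way down without ever exceeding the residual $+1$, so that Katzman's bound is attained; the three inequalities in $(III)(ii)$ are precisely the statements that this happens for $C_n$, for $C_m$, and for the ``doubly wasteful'' configuration. The middle value $\nu(G)+2$ in $(III)$ is then automatic from the two equivalences and the bounds.

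For part $(IV)$, the bounds above no longer pinch when $l\le 2$, so the idea is to lengthen the bridge by the Lozin transformation. Applied to an edge $x$ on $P_l$ it produces a bicyclic graph $\LozGraph$ with the same two cycles $C_n$, $C_m$ (still $\equiv 2\pmod 3$) and a longer connecting path; by \cite[Theorem 1.1]{LOZIN_TRANS} and \cite[Lemma 1]{LOZIN_TRANSFORMATION}, exactly as in \autoref{induct_skel_lozin}, both $\reg{I(\cdot)}$ and $\nu(\cdot)$ rise by $1$, so the excess is preserved. Hence $\reg{I(G)}=\nu(G)+1$ if and only if $\reg{I(\LozGraph)}=\nu(\LozGraph)+1$, and the latter is settled by the criterion of part $(III)(ii)$ applied to $\LozGraph$; it then only remains to check that the Lozin transformation is compatible with the three $\Gamma$-conditions, i.e. carries $\Gamma_G(C_n\cup C_m)$, $\Gamma_G(C_n)$, $\Gamma_G(C_m)$ to $\Gamma_{\LozGraph}(C_n\cup C_m)$, $\Gamma_{\LozGraph}(C_n)$, $\Gamma_{\LozGraph}(C_m)$ in the sense needed.

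I expect the main obstacle to be the inductive bookkeeping that makes the lower and upper bounds coincide: one must show that peeling a pendant tree changes $\reg{I(G)}$ and $\nu(G)$ in lockstep — so that the excess really is invariant under the reduction — and, simultaneously, that the finitely many induced-subgraph witnesses and vertex/edge splittings are sharp, i.e. that the $\Gamma_G(\cdot)$-conditions are necessary and not merely sufficient. Proving necessity in $(III)(ii)$, where three separate numerical inequalities must hold at once, and verifying that the Lozin transformation respects all of them in $(IV)$, is where the genuine work lies.
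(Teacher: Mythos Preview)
Your outline is essentially the paper's approach: inductive peeling of pendant leaves to get the coarse upper bounds (this is \autoref{general_prop_bicyclic}), induced-subgraph witnesses $G\setminus\Gamma_G(\cdot)$ for the lower bounds, the unicyclic characterization of \cite{REG_UNICYCLIC_GRAPH} as the workhorse, and the Lozin transformation to reduce $(IV)$ to $(III)$.

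Two points where your description diverges from what actually makes the argument close. First, for the sufficiency direction in $(II)$ and $(III)$ the paper does \emph{not} delete a bridge vertex or a bridge-adjacent cycle vertex. It introduces a decomposition of $S_{G,0}(C_n)$ into connected components $F_1,\ldots,F_c$ (each rooted at a single cycle vertex) with corresponding ``stubs'' $H_i=F_i\setminus\{v:d(v,C_n)\le 1\}$; the hypothesis $\nu(G)>\nu(G\setminus\Gamma_G(C_n))$ is shown (via the contrapositive of a lemma asserting $\nu(H_i)=\nu(F_i)\,\forall i\Rightarrow\nu(G\setminus\Gamma_G(C_n))=\nu(G)$) to force $\nu(H_i)<\nu(F_i)$ for some $i$, and \emph{that} $i$ determines the cycle vertex $x\in F_i\cap C_n$ to delete. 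This choice is what guarantees $\nu(G'')<\nu(G)$ and lets the recursion terminate; a generic bridge-adjacent vertex need not work when the pendant trees are attached far from the bridge.

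Second, you have the difficulty of $(III)(ii)$ backwards. Necessity of the three inequalities is short: if any one fails, the corresponding induced subgraph $G\setminus\Gamma_G(\cdot)$ already has regularity $\ge\nu(G)+2$ by the same disjoint-union computation you describe. Sufficiency is the substantial part (the paper's \autoref{charact_thm_nu+1_in_bad_case}), handled in three separate steps depending on whether the witnessing $F_i$ is a tree ($i>1$) or the unicyclic component containing $C_m$ ($i=1$); in the latter case one needs the stronger hypothesis $\nu(G)-\nu(G\setminus\Gamma_G(C_n\cup C_m))>1$, not just $>0$, to push through. Also, for $(IV)$ no compatibility check between $\Gamma_G$ and $\Gamma_{\LozGraph}$ is needed: the criterion is stated directly in terms of $\LozGraph$, and the excess $\reg-\nu$ is preserved, so $(III)(ii)$ applies verbatim to $\LozGraph$.
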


The following simple remark will be crucial in our treatment.
\begin{rem}\cite[Observation 2.1]{REG_UNICYCLIC_GRAPH}
	\label{leaf_delete_decrease_nu}
	Let $G$ be a graph with a leaf $y$ and its unique neighbor $x$, say $e=\{x,y\}$. 
	If $\{e_1,\ldots,e_s\}$ is an induced matching in $G\setminus N[x]$, then $\{e_1,\ldots,e_s,e \}$ is an induced matching in $G$.
	So we have $\nu(G\setminus N[x]) + 1 \le \nu(G)$.
\end{rem}

\begin{prop}
	\label{general_prop_bicyclic}
	Let $G$ be a bicyclic graph with dumbbell $\cnplcm$. The following statements hold.
	\begin{enumerate}[(i)]
		\item When $ n , m\equiv 0,1  \;(\Mod 3)$, we have $\reg{I(G)} = \nu(G) + 1$.
		\item When $ n \equiv 0,1  \;(\Mod 3)$ and $m\equiv 2  \;(\Mod 3)$, we have $\reg{I(G)} \le \nu(G) + 2$.
		\item When $l \le 2$, we have $\reg{I(G)} \le \nu(G) + 2$.
	\end{enumerate}
	\begin{proof}
		$(i)$ Again, it is enough to prove the upper bound $\reg{I(G)} \le \nu(G) + 1$.
		Let $E^\prime$ be the set of edges $E^\prime=E(G)\setminus E(\cnplcm)$. 
		We proceed by induction on the cardinality of $E^\prime$. 
		If $\lvert E^\prime \rvert = 0$ then the statement follows from \autoref{reg_dumbbell}, so we assume $\lvert E^\prime \rvert > 0$.
		There exists a leaf $y$ in $G$ such that $N[y]=\{x\}$. 
		Let $G^\prime=G \setminus x$ and $G^{\prime\prime}=G\setminus N[x]$, then by \autoref{short exact sequence} we have 
		$$
		\reg{I(G)} \le \max\{\reg{I(G^\prime)}, \reg{I(G^{\prime\prime})}+1 \}.
		$$  
		The graphs $G^\prime$ and $G^{\prime\prime}$ can be either  bicyclic graphs with the same dumbbell $\cnplcm$, or the disjoint union of two unicyclic graphs with cycles $C_n$ and $C_m$, or unicyclic graphs with a cycle $C_r$ ($r=n$ or $r=m$) of the type $r\equiv 0,1  \,(\Mod 3)$, or forests.
		Using either the induction hypothesis,  or \cite[Theorem 1.1]{REG_UNICYCLIC_GRAPH} and \autoref{Kalai-Meshulam}, or \cite[Theorem 1.1]{REG_UNICYCLIC_GRAPH}, or \autoref{reg_Forests}, then we get $\reg{I(G^\prime)} =\nu(G^\prime) + 1$	and $\reg{I(G^{\prime\prime})} =\nu(G^{\prime\prime}) + 1$.	
		Since we have $\nu(G^\prime) \le \nu(G)$ and $\nu(G^{\prime\prime}) + 1 \le \nu(G)$ (by \autoref{leaf_delete_decrease_nu}), then we obtain the required inequality.
		
		$(ii)$ and $(iii)$ follow by the same inductive argument, only changing the fact that $G^\prime$ and $G^{\prime\prime}$ could be unicyclic graphs with cycle $C_r$ of the type $r\equiv 2  \;(\Mod 3)$.	
	\end{proof}
\end{prop}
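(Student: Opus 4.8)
The plan is to treat the three parts uniformly, reducing each to an upper bound since the matching lower bound $\reg{I(G)} \ge \nu(G)+1$ is \autoref{lower_bound_Katzman}, and then to induct on the number $|E'|$ of edges of $G$ not lying on the dumbbell $\cnplcm$. The edges of $E'$ span a forest, for a cycle among them would be a third cycle of $G$; hence when $|E'|=0$ we have $G=\cnplcm$ and the required bounds --- $\reg{I(G)}=\nu(G)+1$ in part (i), and $\reg{I(G)}\le\nu(G)+2$ in parts (ii) and (iii) --- are exactly the values \autoref{reg_dumbbell} assigns to a dumbbell (for (ii) and (iii) the dumbbell regularity is $\nu+1$ or $\nu+2$ according to the residue of $l$ modulo $3$).

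For the inductive step, suppose $|E'|>0$. Since the forest spanned by $E'$ is nonempty, $G$ has a leaf $y$ not lying on the dumbbell; letting $x$ be its neighbour, \autoref{short exact sequence}(ii) yields
\[
\reg{I(G)} \le \max\bigl\{\, \reg{I(G\setminus x)},\ \reg{I(G\setminus N[x])}+1 \,\bigr\}.
\]
The structural observation I would invoke is that deleting a vertex or a closed neighbourhood from the bicyclic graph $G$ can only destroy cycles --- it cannot create or shrink one --- so each of $G\setminus x$ and $G\setminus N[x]$ is the disjoint union of a (possibly empty) forest with at most one further component, which is either a bicyclic graph having the same dumbbell $\cnplcm$ but strictly fewer non-dumbbell edges, or a unicyclic graph whose cycle is $C_n$ or $C_m$ with its number of vertices unchanged. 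As both $\reg$ and $\nu$ are additive over disjoint unions (the regularity formula being \cite[Lemma 3.2]{HOA_TAM}, already used above), it is enough to bound each summand, using the induction hypothesis for the bicyclic piece, \cite[Theorem 1.1]{REG_UNICYCLIC_GRAPH} for the unicyclic piece, and \autoref{reg_Forests} for the forest.

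In part (i) every cycle that can occur has residue $0$ or $1$ modulo $3$, so each summand contributes exactly $\nu+1$; thus $\reg{I(G\setminus x)}=\nu(G\setminus x)+1$ and $\reg{I(G\setminus N[x])}=\nu(G\setminus N[x])+1$, and with $\nu(G\setminus x)\le\nu(G)$ and $\nu(G\setminus N[x])+1\le\nu(G)$ (the latter by \autoref{leaf_delete_decrease_nu}) both entries of the maximum are $\le\nu(G)+1$. In parts (ii) and (iii) a surviving cycle of residue $2$ may appear, for which \cite[Theorem 1.1]{REG_UNICYCLIC_GRAPH} yields only $\nu+2$; the point is that at most one such ``bad'' cycle can sit inside a single component --- in (ii) because only $C_m$ is bad, and in (iii) because with $l\le 2$ no deletion of a single closed neighbourhood can split $C_n$ from $C_m$ into distinct components without meeting, hence destroying, one of them. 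Therefore $\reg{I(G\setminus x)}\le\nu(G\setminus x)+2\le\nu(G)+2$ and $\reg{I(G\setminus N[x])}+1\le\nu(G\setminus N[x])+3\le\nu(G)+2$, again by \autoref{leaf_delete_decrease_nu}.

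I expect the main obstacle to be exactly this structural bookkeeping for $G\setminus x$ and $G\setminus N[x]$: that a leaf off the dumbbell always exists when $|E'|>0$, that any cycle surviving a vertex deletion is $C_n$ or $C_m$ with its residue mod $3$ intact, and --- the genuinely delicate point --- that for $l\le 2$ one never obtains two disjoint unicyclic components each carrying a residue-$2$ cycle, which is precisely the configuration that would force the weaker (and false) value $\nu(G)+3$. Everything else reduces to additivity of $\reg$ and $\nu$ over disjoint unions together with the monotonicity facts $\nu(G\setminus x)\le\nu(G)$ and $\nu(G\setminus N[x])+1\le\nu(G)$.
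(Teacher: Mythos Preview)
Your proof is correct and follows the same inductive approach as the paper's: induct on $|E'|$, pick a leaf and its neighbour, apply \autoref{short exact sequence}(ii), and bound the resulting pieces via the dumbbell base case, the unicyclic result \cite[Theorem 1.1]{REG_UNICYCLIC_GRAPH}, and \autoref{reg_Forests}, closing with $\nu(G\setminus x)\le\nu(G)$ and \autoref{leaf_delete_decrease_nu}. You in fact supply more structural justification than the paper does---in particular your explicit observation that for $l\le 2$ every vertex of $P_l$ lies on a cycle, so deleting a closed neighbourhood cannot separate $C_n$ from $C_m$ without destroying one of them, is exactly the point the paper leaves implicit in its one-line treatment of (ii) and (iii).
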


\begin{rem}
	The inductive process of the previous proposition cannot conclude $\reg{I(G)} \le \nu(G) + 2$
	in the case $l\ge 3$.
	Here we may encounter two disjoint subgraphs $G_1$ and $G_2$ with $\reg{I(G_i)} = \nu(G_i)+2$, which implies $\reg{I(G_1 \cup G_2)} = \nu(G_1\cup G_2) + 3$.
	This is exactly the case of \autoref{examp_reg_nu+3}.
	
	An alternative proof of the inequality $\reg{I(G)} \le \nu(G) + 3$ can be given by using the same inductive technique of \autoref{general_prop_bicyclic}.
\end{rem}

For the rest of the paper we shall use the following notation.
\begin{nota}
	Let $G$ be a graph, $H \subset G$ be a subgraph, and $v$ and $w$ be vertices of $G$. 
	Then, we assume the following:
	\begin{enumerate}[(i)]
		\item  $d(v,w)$ denotes the length (i.e., the number of edges) of a minimal path between $v$ and $w$.
		In particular, $d(v,v)=0$.
		\item $d(v,H)$ denotes the minimal distance from the vertex $v$ to the subgraph $H$, that is 
		$$
		d(v, H)=\min\{ d(v, w) \mid w \in H \}.
		$$
		In particular, $d(v,H)=0$ if and only if $v \in H$.
		\item Let $H^{\prime} \subset G$ be a subgraph, then the distance between $H$ and $H^{\prime}$ is given by
		$$
		d(H,H^{\prime})=\min\{d(v,H^{\prime})\mid v \in H \}.
		$$
		In particular, $d(H,H^{\prime})=0$ if and only if $H \cap H^{\prime}\neq\emptyset$.
		\item $\Gamma_G(H)$ denotes the subset of vertices
		$$
		\Gamma_G(H) = \{v \in G \mid d(v, H) = 1 \}.
		$$ 	
		\item 	In the case $k > 0$, $S_{G,k}(H)$ denotes the induced subgraph given by restricting to the vertex set 
		$$
		V(S_{G,k}(H)) = \{v \in G\mid  d(v, H) \ge k \}.
		$$
		\item $S_{G,0}$ denotes the subgraph given by the vertex set 
		$$
		V(S_{G,0}(H)) = \{v \in G\mid  d(v, H) > 0 \text{ or } \deg(v) \ge 3 \}.
		$$
		and the edge set
		\begin{align*}
			E(S_{G,0}(H)) = \{ (v,w) \in E(G&) \mid v,w \in V(S_{G,0}(H)) \} \\ &\setminus \{ (v,w) \in E(G) \mid v,w \in H \}.
		\end{align*}
	\end{enumerate}
\end{nota}

We clarify the previous notation in the following example.

\begin{exmp}
	\begin{enumerate}[(i)]
		\item 	Let $G$ be the graph of \autoref{examp_reg_nu+3} and $H=C_5\cup C_5$ be the subgraph given by the two cycles of length $5$. 
		Then, we have that $\Gamma_G(H)$ is the set containing the vertex in the middle of the bridge joining the two cycles, that $S_{G,0}(H)$ is a graph of the form
		\vspace*{-.1cm}
		\begin{center}
			\definecolor{ududff}{rgb}{0.30196078431372547,0.30196078431372547,1.}
			\begin{tikzpicture}[line cap=round,line join=round,>=triangle 45,x=1.0cm,y=1.0cm]
			\clip(4.,2.2) rectangle (8.,5.5);
			\draw [line width=1.2pt] (5.,3.)-- (6.,3.);
			\draw [line width=1.2pt] (6.,3.)-- (7.,3.);
			\draw [line width=1.2pt] (6.,3.)-- (6.,4.);
			\draw [line width=1.2pt] (6.,4.)-- (6.,5.);
			\begin{scriptsize}
			\draw [fill=ududff] (5.,3.) circle (2.0pt);
			\draw[color=ududff] (4.47,3.26) node {$x_1$};
			\draw [fill=ududff] (6.,3.) circle (2.0pt);
			\draw[color=ududff] (5.93,2.74) node {$z_1$};
			\draw [fill=ududff] (7.,3.) circle (2.0pt);
			\draw[color=ududff] (7.35,3.26) node {$y_1$};
			\draw [fill=ududff] (6.,4.) circle (2.0pt);
			\draw[color=ududff] (6.19,4.38) node {$z_2$};
			\draw [fill=ududff] (6.,5.) circle (2.0pt);
			\draw[color=ududff] (6.19,5.38) node {$z_3$};
			\end{scriptsize}
			\end{tikzpicture}
		\end{center}
		\vspace*{-.2cm}
		and that the graph
		\vspace*{-.7cm}
		\begin{center}
			\definecolor{ududff}{rgb}{0.30196078431372547,0.30196078431372547,1.}
			\begin{tikzpicture}[line cap=round,line join=round,>=triangle 45,x=1.0cm,y=1.0cm]
			\clip(5.,3.6) rectangle (7.,5.5);
			\draw [line width=1.2pt] (6.,4.)-- (6.,5.);
			\begin{scriptsize}
			\draw [fill=ududff] (6.,4.) circle (2.0pt);
			\draw[color=ududff] (6.2492337881991075,4.29805366134922) node {$z_2$};
			\draw [fill=ududff] (6.,5.) circle (2.0pt);
			\draw[color=ududff] (6.1492337881991075,5.297477070920621) node {$z_3$};
			\end{scriptsize}
			\end{tikzpicture}
		\end{center}
		\vspace*{-.2cm}
		represents $S_{G,2}(H)$.
		
		\item Let $G$ be the graph given by
		\begin{center}
			\definecolor{ududff}{rgb}{0.30196078431372547,0.30196078431372547,1.}
			\begin{tikzpicture}[line cap=round,line join=round,>=triangle 45,x=1.0cm,y=1.0cm]
			\clip(0.5,2.5) rectangle (8.,5.5);
			\draw [line width=1.2pt] (3.,3.)-- (4.,5.);
			\draw [line width=1.2pt] (4.,5.)-- (5.,3.);
			\draw [line width=1.2pt] (5.,3.)-- (3.,3.);
			\draw [line width=1.2pt] (4.,5.)-- (5.,5.);
			\draw [line width=1.2pt] (5.,5.)-- (6.,5.);
			\draw [line width=1.2pt] (5.,3.)-- (6.,3.);
			\draw [line width=1.2pt] (6.,3.)-- (7.,3.);
			\draw [line width=1.2pt] (3.,3.)-- (2.,3.);
			\draw [line width=1.2pt] (2.,3.)-- (1.,3.);
			\begin{scriptsize}
			\draw [fill=ududff] (3.,3.) circle (2.0pt);
			\draw[color=ududff] (3.3895235394214446,3.312864435621105) node {$x_1$};
			\draw [fill=ududff] (4.,5.) circle (2.0pt);
			\draw[color=ududff] (4.19802609188883,5.389325014180379) node {$x_3$};
			\draw [fill=ududff] (5.,3.) circle (2.0pt);
			\draw[color=ududff] (5.156392512762341,3.3727623369257) node {$x_2$};
			\draw [fill=ududff] (5.,5.) circle (2.0pt);
			\draw[color=ududff] (5.196324446965404,5.389325014180379) node {$x_4$};
			\draw [fill=ududff] (6.,5.) circle (2.0pt);
			\draw[color=ududff] (6.1946228020419785,5.389325014180379) node {$x_5$};
			\draw [fill=ududff] (6.,3.) circle (2.0pt);
			\draw[color=ududff] (6.1946228020419785,3.3727623369257) node {$x_6$};
			\draw [fill=ududff] (7.,3.) circle (2.0pt);
			\draw[color=ududff] (7.192921157118552,3.3727623369257) node {$x_7$};
			\draw [fill=ududff] (2.,3.) circle (2.0pt);
			\draw[color=ududff] (2.1814634146341505,3.3727623369257) node {$x_8$};
			\draw [fill=ududff] (1.,3.) circle (2.0pt);
			\draw[color=ududff] (1.1831650595575767,3.3727623369257) node {$x_9$};
			\end{scriptsize}
			\end{tikzpicture}
		\end{center}
	and $H$ be the triangle induced by the vertices $\{x_1,x_2,x_3\}$.
	Then, we have that $\Gamma_G(H)=\{x_4, x_6, x_8\}$, that $S_{G,0}(H)$ is a graph of the form
	\begin{center}
		\definecolor{ududff}{rgb}{0.30196078431372547,0.30196078431372547,1.}
		\begin{tikzpicture}[line cap=round,line join=round,>=triangle 45,x=1.0cm,y=1.0cm]
		\clip(0.5,2.5) rectangle (8.,5.5);
		\draw [line width=1.2pt] (4.,5.)-- (5.,5.);
		\draw [line width=1.2pt] (5.,5.)-- (6.,5.);
		\draw [line width=1.2pt] (5.,3.)-- (6.,3.);
		\draw [line width=1.2pt] (6.,3.)-- (7.,3.);
		\draw [line width=1.2pt] (3.,3.)-- (2.,3.);
		\draw [line width=1.2pt] (2.,3.)-- (1.,3.);
		\begin{scriptsize}
		\draw [fill=ududff] (3.,3.) circle (2.0pt);
		\draw[color=ududff] (3.3895235394214446,3.312864435621105) node {$x_1$};
		\draw [fill=ududff] (4.,5.) circle (2.0pt);
		\draw[color=ududff] (4.19802609188883,5.389325014180379) node {$x_3$};
		\draw [fill=ududff] (5.,3.) circle (2.0pt);
		\draw[color=ududff] (5.156392512762341,3.3727623369257) node {$x_2$};
		\draw [fill=ududff] (5.,5.) circle (2.0pt);
		\draw[color=ududff] (5.196324446965404,5.389325014180379) node {$x_4$};
		\draw [fill=ududff] (6.,5.) circle (2.0pt);
		\draw[color=ududff] (6.1946228020419785,5.389325014180379) node {$x_5$};
		\draw [fill=ududff] (6.,3.) circle (2.0pt);
		\draw[color=ududff] (6.1946228020419785,3.3727623369257) node {$x_6$};
		\draw [fill=ududff] (7.,3.) circle (2.0pt);
		\draw[color=ududff] (7.192921157118552,3.3727623369257) node {$x_7$};
		\draw [fill=ududff] (2.,3.) circle (2.0pt);
		\draw[color=ududff] (2.1814634146341505,3.3727623369257) node {$x_8$};
		\draw [fill=ududff] (1.,3.) circle (2.0pt);
		\draw[color=ududff] (1.1831650595575767,3.3727623369257) node {$x_9$};
		\end{scriptsize}
		\end{tikzpicture}
	\end{center}
	and that the graph
	\begin{center}
		\definecolor{ududff}{rgb}{0.30196078431372547,0.30196078431372547,1.}
		\begin{tikzpicture}[line cap=round,line join=round,>=triangle 45,x=1.0cm,y=1.0cm]
		\clip(0.5,2.5) rectangle (8.,5.5);
		\begin{scriptsize}
		\draw [fill=ududff] (6.,5.) circle (2.0pt);
		\draw[color=ududff] (6.1946228020419785,5.389325014180379) node {$x_5$};
		\draw [fill=ududff] (7.,3.) circle (2.0pt);
		\draw[color=ududff] (7.192921157118552,3.3727623369257) node {$x_7$};
		\draw [fill=ududff] (1.,3.) circle (2.0pt);
		\draw[color=ududff] (1.1831650595575767,3.3727623369257) node {$x_9$};
		\end{scriptsize}
		\end{tikzpicture}
	\end{center}
	represents $S_{G,2}(H)$.
	\end{enumerate}
\end{exmp}

We have already computed $\reg{I(G)}$ in the case $n,m\equiv 0,1 \; (\text{mod}\;3)$, for the remaining cases we shall divide this section into subsections.

\subsection{Case I}\hspace{\fill}\\
In this subsection we shall focus on the case $n \equiv 0,1 \; (\text{mod}\;3)$
and $m \equiv 2 \; (\text{mod}\;3)$.
This case turns out to be almost identical to a unicyclic graph, and our treatment is influenced by \cite[Section 3]{REG_UNICYCLIC_GRAPH}. 

\begin{nota}
	\label{only_one_bad_cycle}
	Let $G$ be a bicyclic graph with dumbbell $\cnplcm$ such that $n \equiv 0,1 \; (\Mod 3)$
	and $m \equiv 2 \; (\Mod 3)$.
	We shall denote by $F_1,\ldots,F_c$ the connected components of $S_{G,0}(C_m)$, and in this case each $F_i$ is either a tree or a unicyclic graph with cycle $C_n$ (and $n \equiv 0,1 \; (\Mod 3)$).
	Then, the graph $S_{G,2}(C_m)$ can be given as the union of the components $H_1,\ldots,H_c$, where each one is defined as 
	$$
	H_i = F_i \setminus \{v \in G \mid d(v, C_m) \le 1 \}.
	$$		
	We note that each $H_i$ can be a non-connected graph or even the empty graph.
\end{nota}

\begin{rem}	
	\label{rem_Case_I}
	The following statements hold.
	\begin{enumerate}[(i)]
		\item The graph $G\setminus \Gamma_G(C_m)$ has a decomposition of the form
		$$
		G\setminus \Gamma_G(C_m) = C_m \bigcup \left(\bigcup_{i=1}^cH_i\right),
		$$
		and in particular
		$$
		\nu\left(G\setminus \Gamma_G(C_m)\right) = \nu\left(C_m\right) + \sum_{i=1}^{c}\nu\left(H_i\right)
		$$
		because $d(C_m,H_i)\ge 2$ for all $1\le i \le  c$ and $d(H_i,H_j)\ge 2$ for all $1\le i<j \le c$. 
		\item For each $i=1,\ldots,c$, we have that $\lvert F_i \cap C_m \rvert=1$.
	\end{enumerate}

\end{rem}

\begin{exmp}
	Let $G$ be the graph 
	\begin{center}
	 	\definecolor{ududff}{rgb}{0.30196078431372547,0.30196078431372547,1.}
	 	\begin{tikzpicture}[line cap=round,line join=round,>=triangle 45,x=1.0cm,y=1.0cm]
	 	\clip(3.5,1.5) rectangle (13.,5.5);
	 	\draw [line width=1.2pt] (5.,3.)-- (6.,3.);
	 	\draw [line width=1.2pt] (6.,3.)-- (7.,3.);
	 	\draw [line width=1.2pt] (6.,3.)-- (6.,4.);
	 	\draw [line width=1.2pt] (6.,4.)-- (6.,5.);
	 	\draw [line width=1.2pt] (7.,3.)-- (8.,4.);
	 	\draw [line width=1.2pt] (8.,4.)-- (10.,4.);
	 	\draw [line width=1.2pt] (10.,4.)-- (10.,2.);
	 	\draw [line width=1.2pt] (10.,2.)-- (8.,2.);
	 	\draw [line width=1.2pt] (8.,2.)-- (7.,3.);
	 	\draw [line width=1.2pt] (4.,2.)-- (5.,3.);
	 	\draw [line width=1.2pt] (4.,4.)-- (5.,3.);
	 	\draw [line width=1.2pt] (4.,4.)-- (4.,2.);
	 	\draw [line width=1.2pt] (10.,4.)-- (11.,4.);
	 	\draw [line width=1.2pt] (11.,4.)-- (12.,4.);
	 	\begin{scriptsize}
	 	\draw [fill=ududff] (4.,2.) circle (2.0pt);
	 	\draw[color=ududff] (3.81,2.46) node {$x_5$};
	 	\draw [fill=ududff] (5.,3.) circle (2.0pt);
	 	\draw[color=ududff] (4.47,3.26) node {$x_1$};
	 	\draw [fill=ududff] (4.,4.) circle (2.0pt);
	 	\draw[color=ududff] (4.19,4.38) node {$x_2$};
	 	\draw [fill=ududff] (6.,3.) circle (2.0pt);
	 	\draw[color=ududff] (5.93,2.74) node {$z_1$};
	 	\draw [fill=ududff] (7.,3.) circle (2.0pt);
	 	\draw[color=ududff] (7.35,3.1) node {$y_1$};
	 	\draw [fill=ududff] (6.,4.) circle (2.0pt);
	 	\draw[color=ududff] (6.19,4.38) node {$z_2$};
	 	\draw [fill=ududff] (6.,5.) circle (2.0pt);
	 	\draw[color=ududff] (6.19,5.38) node {$z_3$};
	 	\draw [fill=ududff] (8.,4.) circle (2.0pt);
	 	\draw[color=ududff] (8.15,4.44) node {$y_2$};
	 	\draw [fill=ududff] (10.,4.) circle (2.0pt);
	 	\draw[color=ududff] (10.19,4.38) node {$y_3$};
	 	\draw [fill=ududff] (10.,2.) circle (2.0pt);
	 	\draw[color=ududff] (10.19,2.38) node {$y_4$};
	 	\draw [fill=ududff] (8.,2.) circle (2.0pt);
	 	\draw[color=ududff] (8.19,2.38) node {$y_5$};
	 	\draw [fill=ududff] (11.,4.) circle (2.0pt);
	 	\draw[color=ududff] (11.19,4.38) node {$y_5$};
	 	\draw [fill=ududff] (12.,4.) circle (2.0pt);
	 	\draw[color=ududff] (12.19,4.38) node {$y_6$};
	 	\end{scriptsize}
	 	\end{tikzpicture}			
	\end{center}
	and $C_5$ be the cycle given by $\{y_1,y_2,y_3,y_4,y_5\}$.
 	We have that $\Gamma_G(C_5)=\{z_1,y_5\}$.
 	The graph $S_{G,0}(C_5)$  is given by 
	\begin{center}
		\definecolor{ududff}{rgb}{0.30196078431372547,0.30196078431372547,1.}
		\begin{tikzpicture}[line cap=round,line join=round,>=triangle 45,x=1.0cm,y=1.0cm]
		\clip(3.5,1.5) rectangle (13.,5.5);
		\draw [line width=1.2pt] (5.,3.)-- (6.,3.);
		\draw [line width=1.2pt] (6.,3.)-- (7.,3.);
		\draw [line width=1.2pt] (6.,3.)-- (6.,4.);
		\draw [line width=1.2pt] (6.,4.)-- (6.,5.);
		\draw [line width=1.2pt] (4.,2.)-- (5.,3.);
		\draw [line width=1.2pt] (4.,4.)-- (5.,3.);
		\draw [line width=1.2pt] (4.,4.)-- (4.,2.);
		\draw [line width=1.2pt] (10.,4.)-- (11.,4.);
		\draw [line width=1.2pt] (11.,4.)-- (12.,4.);
		\begin{scriptsize}
		\draw [fill=ududff] (4.,2.) circle (2.0pt);
		\draw[color=ududff] (3.81,2.46) node {$x_5$};
		\draw [fill=ududff] (5.,3.) circle (2.0pt);
		\draw[color=ududff] (4.47,3.26) node {$x_1$};
		\draw [fill=ududff] (4.,4.) circle (2.0pt);
		\draw[color=ududff] (4.19,4.38) node {$x_2$};
		\draw [fill=ududff] (6.,3.) circle (2.0pt);
		\draw[color=ududff] (5.93,2.74) node {$z_1$};
		\draw [fill=ududff] (7.,3.) circle (2.0pt);
		\draw[color=ududff] (7.35,3.1) node {$y_1$};
		\draw [fill=ududff] (6.,4.) circle (2.0pt);
		\draw[color=ududff] (6.19,4.38) node {$z_2$};
		\draw [fill=ududff] (6.,5.) circle (2.0pt);
		\draw[color=ududff] (6.19,5.38) node {$z_3$};
		\draw [fill=ududff] (10.,4.) circle (2.0pt);
		\draw[color=ududff] (10.19,4.38) node {$y_3$};
		\draw [fill=ududff] (11.,4.) circle (2.0pt);
		\draw[color=ududff] (11.19,4.38) node {$y_5$};
		\draw [fill=ududff] (12.,4.) circle (2.0pt);
		\draw[color=ududff] (12.19,4.38) node {$y_6$};
		\end{scriptsize}
		\end{tikzpicture}			
	\end{center}
	with connected components $F_1=\{y_1,z_1,z_2,z_3,x_1,x_2,x_5\}$ and $F_2=\{y_3,y_4,y_5\}$.
	The graph $S_{G,2}(C_5)$ is given by
		\begin{center}
		\definecolor{ududff}{rgb}{0.30196078431372547,0.30196078431372547,1.}
		\begin{tikzpicture}[line cap=round,line join=round,>=triangle 45,x=1.0cm,y=1.0cm]
		\clip(3.5,1.5) rectangle (13.,5.5);
		\draw [line width=1.2pt] (6.,4.)-- (6.,5.);
		\draw [line width=1.2pt] (4.,2.)-- (5.,3.);
		\draw [line width=1.2pt] (4.,4.)-- (5.,3.);
		\draw [line width=1.2pt] (4.,4.)-- (4.,2.);
		\begin{scriptsize}
		\draw [fill=ududff] (4.,2.) circle (2.0pt);
		\draw[color=ududff] (3.81,2.46) node {$x_5$};
		\draw [fill=ududff] (5.,3.) circle (2.0pt);
		\draw[color=ududff] (4.47,3.26) node {$x_1$};
		\draw [fill=ududff] (4.,4.) circle (2.0pt);
		\draw[color=ududff] (4.19,4.38) node {$x_2$};
		\draw [fill=ududff] (6.,4.) circle (2.0pt);
		\draw[color=ududff] (6.19,4.38) node {$z_2$};
		\draw [fill=ududff] (6.,5.) circle (2.0pt);
		\draw[color=ududff] (6.19,5.38) node {$z_3$};
		\draw [fill=ududff] (12.,4.) circle (2.0pt);
		\draw[color=ududff] (12.19,4.38) node {$y_6$};
		\end{scriptsize}
		\end{tikzpicture}			
	\end{center}
	and following our notations we have $H_1=\{x_1,x_2,x_5,z_2,z_3\}$ and $H_2=\{y_6\}$.
\end{exmp}

\begin{lem}
	\label{components_case_only_one_bad}
	Adopt \autoref{only_one_bad_cycle}.
	If $\nu(H_i)=\nu(F_i)$ for all $1 \le i \le c$, then $\nu(G \setminus \Gamma_G(C_m))=\nu(G)$.
	\begin{proof}
		Follows identically to \cite[Lemma 3.5]{REG_UNICYCLIC_GRAPH}.
	\end{proof}
\end{lem}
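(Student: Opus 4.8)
The plan is to prove $\nu(G \setminus \Gamma_G(C_m)) \le \nu(G)$ and $\nu(G) \le \nu(G \setminus \Gamma_G(C_m))$ separately; the first inequality holds unconditionally, while the second is where the hypothesis $\nu(H_i) = \nu(F_i)$ is used. For the first one, observe that $G \setminus \Gamma_G(C_m)$ is an induced subgraph of $G$, and any induced matching of an induced subgraph of $G$ is again an induced matching of $G$; hence $\nu$ is monotone under passage to induced subgraphs, which gives $\nu(G \setminus \Gamma_G(C_m)) \le \nu(G)$.

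For the reverse inequality I would first spell out the structural decomposition behind \autoref{only_one_bad_cycle}. Let $v_i$ denote the unique vertex of $F_i \cap C_m$, which is well defined by \autoref{rem_Case_I}(ii). Then the $v_i$ are pairwise distinct, the vertex sets $V(F_i)$ are pairwise disjoint, and both $C_m$ and each $F_i$ are induced subgraphs of $G$ (the former since a bicyclic graph carries no chord across $C_m$, the latter since $v_i$ is the only vertex of $F_i$ lying on $C_m$). The crucial point is that every edge of $G$ has both endpoints in $C_m$ or both endpoints in a single $F_i$: an edge that is not an edge of $C_m$ belongs to $E(S_{G,0}(C_m))$ and hence lies in one component $F_i$, because a vertex of $C_m$ incident to an edge leaving $C_m$ has degree at least $3$ and therefore survives in $V(S_{G,0}(C_m))$.

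Now take a maximum induced matching $\mathcal{M}$ of $G$. By the previous paragraph it partitions as $\mathcal{M} = \mathcal{M}_0 \sqcup \bigsqcup_{i=1}^{c}\mathcal{M}_i$, where $\mathcal{M}_0$ consists of the edges of $\mathcal{M}$ lying inside $C_m$ and $\mathcal{M}_i$ of those lying inside $F_i$. Each $\mathcal{M}_0$ (resp. $\mathcal{M}_i$), being a sub-collection of $\mathcal{M}$ supported on the induced subgraph $C_m$ (resp. $F_i$), is itself an induced matching of $C_m$ (resp. $F_i$), so $\lvert\mathcal{M}_0\rvert \le \nu(C_m)$ and $\lvert\mathcal{M}_i\rvert \le \nu(F_i)$. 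Summing, and then applying the hypothesis $\nu(F_i)=\nu(H_i)$ followed by \autoref{rem_Case_I}(i), we obtain
$$
\nu(G) = \lvert\mathcal{M}\rvert \le \nu(C_m) + \sum_{i=1}^{c}\nu(F_i) = \nu(C_m) + \sum_{i=1}^{c}\nu(H_i) = \nu\big(G \setminus \Gamma_G(C_m)\big),
$$
and combined with the first inequality this gives the asserted equality. The only delicate step is the structural decomposition of the second paragraph — verifying from bicyclicity and the definition of $S_{G,0}(C_m)$ that $C_m$ and the $F_i$ overlap only in the single vertices $v_i$ and that no edge of $G$ crosses between two of these pieces; once this is in place the argument is just additivity and monotonicity of $\nu$, running parallel to \cite[Lemma 3.5]{REG_UNICYCLIC_GRAPH}.
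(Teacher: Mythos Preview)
Your proof is correct and is essentially the argument the paper defers to via \cite[Lemma 3.5]{REG_UNICYCLIC_GRAPH}: one direction is monotonicity of $\nu$ under induced subgraphs, and for the other you partition a maximum induced matching of $G$ along the decomposition into $C_m$ and the $F_i$, bound each piece by $\nu(C_m)$ or $\nu(F_i)=\nu(H_i)$, and sum using \autoref{rem_Case_I}(i). Your structural verification that every edge of $G$ lies in $C_m$ or in a single $F_i$, and that each $F_i$ is genuinely induced in $G$ (since $\lvert F_i\cap C_m\rvert=1$), is exactly the point one has to check.
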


\begin{prop}
	\label{implic_one_bad_cycle}
	Adopt \autoref{only_one_bad_cycle}.
	If $\nu(G \setminus \Gamma_G(C_m)) < \nu(G)$ then $\reg{I(G)} = \nu(G) + 1$.
	\begin{proof}
		Once more, we shall only prove that $\reg{I(G)} \le \nu(G) + 1$.
		Assume that $\nu(G \setminus \Gamma_G(C_m)) < \nu(G)$, then the contrapositive of \autoref{components_case_only_one_bad} implies that there exists some $i$ with $\nu(H_i) < \nu(F_i)$.
		
		Fix $i$  such that $\nu(H_i) < \nu(F_i)$.
		From \autoref{rem_Case_I}$(ii)$, let $x$ be the vertex in $F_i \cap C_m$. 
		Let us use the notations $G^\prime=G\setminus x$ and $G^{\prime\prime} =G \setminus N[x]$.
		Again, we have the inequality 
		$$
		\reg{I(G)} \le \max\{\reg{I(G^\prime)}, \reg{I(G^{\prime\prime})}+1 \}.
		$$  
		Note that both $G^\prime$ and $G^{\prime\prime}$ can be  either unicyclic graphs with cycle $C_n$ (and $n \equiv 0,1 \; (\text{mod}\;3)$), or forests.
		Hence, from \cite[Theorem 1.1]{REG_UNICYCLIC_GRAPH} and \autoref{reg_Forests} we get that $\reg{I(G^\prime)}=\nu(G^\prime) + 1$ and $\reg{I(G^{\prime\prime})}=\nu(G^{\prime\prime})+1.$

		In the case of $G^\prime$, we have that $\reg{I(G^\prime)} = \nu(G^\prime) + 1 \le \nu(G) + 1$.
		Let $H$ be the induced subgraph of $G$ obtained by deleting the vertices of $F_i \cup N_G[x]$.
		Then we have $G^{\prime\prime}=H \cup H_i$. 
		Let $\matching_1$ and $\matching_2$ be maximal induced matchings in $H$ and $H_i$, respectively, then $\nu(G^{\prime\prime}) = \lvert \matching_1\rvert + \lvert \matching_2\rvert$ because $d(H,H_i)\ge2$.  
		By the condition $\nu(F_i) > \nu(H_i)$ then there exists a maximal induced matching $\matching_3$ in $F_i$, such that $\lvert \matching_3 \rvert > \lvert \matching_2 \rvert$.
		From the fact that $H \cup F_i$ is an induced subgraph in $G$ and $H \cap F_i = \emptyset$, then we get 
		$$
		\nu(G) \ge \nu(H\cup F_i)=\lvert \matching_1\rvert + \lvert \matching_3\rvert > \lvert \matching_1\rvert + \lvert \matching_2\rvert=\nu(G^{\prime\prime}).
		$$
		Hence $\reg{I(G^{\prime\prime})}=\nu(G^{\prime\prime})+1 \le \nu(G)$, and so we get the statement of the proposition.
	\end{proof}
\end{prop}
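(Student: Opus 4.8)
The plan is to establish the upper bound $\reg{I(G)} \le \nu(G)+1$, since the matching reverse inequality $\reg{I(G)} \ge \nu(G)+1$ is Katzman's bound (\autoref{lower_bound_Katzman}). The strategy is the standard inductive one used throughout this section: locate a single vertex $x$ whose deletion breaks $G$ into forests and unicyclic pieces that all carry only the ``good'' cycle $C_n$ (with $n \equiv 0,1 \;(\Mod 3)$), and then feed the short exact sequence of \autoref{short exact sequence}$(ii)$,
\[
\reg{I(G)} \le \max\bigl\{\reg{I(G\setminus x)},\ \reg{I(G\setminus N[x])}+1\bigr\},
\]
into estimates obtained from the already-known regularity formulas.

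To choose $x$, I would use the hypothesis $\nu(G\setminus \Gamma_G(C_m)) < \nu(G)$ together with the contrapositive of \autoref{components_case_only_one_bad}: it forces the existence of a connected component $F_i$ of $S_{G,0}(C_m)$ with $\nu(H_i) < \nu(F_i)$, where $H_i$ is the corresponding piece of $S_{G,2}(C_m)$ as in \autoref{only_one_bad_cycle}. Fix such an $i$ and, by \autoref{rem_Case_I}$(ii)$, let $x$ be the unique vertex of $F_i \cap C_m$. Since $C_m$ is the only cycle of $G$ whose length is $\equiv 2 \;(\Mod 3)$, deleting $x$ (respectively $N[x]$) destroys $C_m$, so both $G' := G\setminus x$ and $G'' := G\setminus N[x]$ are disjoint unions of forests and unicyclic graphs whose cycle is $C_n$ with $n\equiv 0,1\;(\Mod 3)$. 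By \cite[Theorem 1.1]{REG_UNICYCLIC_GRAPH}, \autoref{reg_Forests}, and additivity of regularity over connected components (\cite[Lemma 3.2]{HOA_TAM}, or \autoref{Kalai-Meshulam}), this gives $\reg{I(G')} = \nu(G')+1$ and $\reg{I(G'')} = \nu(G'')+1$.

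It then remains to bound both by $\nu(G)$. For $G'$ this is immediate since $G'$ is an induced subgraph of $G$, so $\reg{I(G')} = \nu(G')+1 \le \nu(G)+1$. For $G''$, write $G'' = H \cup H_i$, where $H$ is $G$ with the vertices of $F_i \cup N_G[x]$ removed; because $d(H,H_i)\ge 2$ one has $\nu(G'') = \nu(H)+\nu(H_i)$. Now $H\cup F_i$ is an induced subgraph of $G$ with $H\cap F_i = \emptyset$ and $d(H,F_i)\ge 2$, hence $\nu(G)\ge \nu(H)+\nu(F_i)$; combined with $\nu(F_i) > \nu(H_i)$ this yields $\nu(G'') = \nu(H)+\nu(H_i) < \nu(H)+\nu(F_i) \le \nu(G)$, so $\reg{I(G'')}+1 = \nu(G'')+2 \le \nu(G)+1$. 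Plugging both estimates into the short exact sequence gives $\reg{I(G)} \le \nu(G)+1$.

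The step I expect to be the main obstacle is the structural bookkeeping around $S_{G,0}(C_m)$: one must verify carefully that removing $x$ (or $N[x]$) leaves \emph{only} forests and unicyclic graphs carrying the good cycle $C_n$ — in particular that no new ``bad'' cycle can be created and that the pieces $H$, $H_i$, $F_i$ sit at pairwise distance $\ge 2$ exactly as the definition of $S_{G,0}$ and $S_{G,2}$ in \autoref{only_one_bad_cycle} is designed to guarantee. Those distance estimates are precisely what let the induced matching numbers add up, so once the decomposition is pinned down the matching comparison $\nu(G'')<\nu(G)$ and the known regularity formulas finish the argument with no further difficulty.
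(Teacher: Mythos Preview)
Your proposal is correct and follows essentially the same approach as the paper's proof: choose $x$ as the unique vertex of $F_i\cap C_m$ for some $i$ with $\nu(H_i)<\nu(F_i)$, apply the vertex-deletion inequality, note that $G'$ and $G''$ have only the good cycle $C_n$ so their regularity equals $\nu+1$, and use the decomposition $G''=H\cup H_i$ together with $\nu(F_i)>\nu(H_i)$ to get $\nu(G'')<\nu(G)$. Your added remarks about additivity over components and the distance estimates are exactly the bookkeeping the paper leaves implicit.
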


\begin{theorem}
	\label{thm_only_one_bad_cycle}
	Let $G$ be a bicyclic graph with dumbbell $\cnplcm$ such that $n \equiv 0,1 \; (\Mod 3)$
	and $m \equiv 2 \; (\Mod 3)$.
	Then the following statements hold.
	\begin{enumerate}[(i)]
		\item $\nu(G) + 1 \le \reg{I(G)} \le \nu(G) + 2$;
		\item $\reg{I(G)} = \nu(G) + 2$ if and only if $\nu(G) = \nu(G\setminus \Gamma_G(C_m))$.
	\end{enumerate}
	\begin{proof}
		In \autoref{general_prop_bicyclic} we proved $(i)$.	
		In order to prove $(ii)$, we only need to show that $\nu(G\setminus \Gamma_G(C_m)) = \nu(G)$ implies $\reg{I(G)}\ge\nu(G) +2$, because the inverse implication follows from \autoref{implic_one_bad_cycle}.
		
		From \autoref{rem_Case_I}$(i)$, $G\setminus \Gamma_G(C_m) = C_m \cup (\cup_{i=1}^cH_i$) where each $H_i$ is either a forest or a unicyclic graph with cycle $C_n$ (and $n \equiv 0,1 \; (\text{mod}\;3)$).
		Then, from \cite[Theorem 1.1]{REG_UNICYCLIC_GRAPH} and \autoref{reg_Forests} we get 
		\begin{align*}
		\reg{I(G\setminus \Gamma_G(C_m))} &= \reg{I(C_m)} + \reg{I(\cup_{i=1}^cH_i)} - 1\\
		&= (\nu(C_m) + 2) +(\nu(\cup_{i=1}^cH_i) + 1) -1 \\
		&=\nu(G \setminus \Gamma_G(C_m)) + 2\\
		&=\nu(G) + 2.
		\end{align*}
		Finally, since $G\setminus \Gamma_G(C_m)$ is an induced subgraph of $G$ then we have $\reg{I(G)} \ge \nu(G) + 2$.
	\end{proof}
\end{theorem}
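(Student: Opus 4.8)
The plan is to obtain \autoref{thm_only_one_bad_cycle} by combining the bounds and structural facts already established in this subsection, exploiting that $G\setminus\Gamma_G(C_m)$ is an induced subgraph of $G$ and that regularity is additive over disjoint unions. Part $(i)$ requires nothing new: the lower bound $\reg{I(G)}\ge\nu(G)+1$ is \autoref{lower_bound_Katzman}, and the upper bound $\reg{I(G)}\le\nu(G)+2$ was proved in \autoref{general_prop_bicyclic}$(ii)$. So, by $(i)$, the only task in $(ii)$ is to decide, depending on $G$, which of the two values $\nu(G)+1$ or $\nu(G)+2$ is attained.

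First I would prove the implication ``$\nu(G)=\nu(G\setminus\Gamma_G(C_m))\ \Rightarrow\ \reg{I(G)}=\nu(G)+2$''. By \autoref{rem_Case_I}$(i)$, $G\setminus\Gamma_G(C_m)$ decomposes as the disjoint union $C_m\cup\bigl(\bigcup_{i=1}^c H_i\bigr)$, where each $H_i$ is a forest or a unicyclic graph whose cycle is $C_n$ with $n\equiv0,1\;(\Mod3)$. Using the regularity formula for unicyclic graphs \cite[Theorem 1.1]{REG_UNICYCLIC_GRAPH}, the formula \autoref{reg_Forests} for forests, and the additivity \cite[Lemma 3.2]{HOA_TAM} of regularity over connected components, one computes
$$
\reg{I(G\setminus\Gamma_G(C_m))}=\reg{I(C_m)}+\reg{I(\cup_{i=1}^c H_i)}-1=(\nu(C_m)+2)+(\nu(\cup_{i=1}^c H_i)+1)-1,
$$
which by \autoref{rem_Case_I}$(i)$ and the hypothesis equals $\nu(G\setminus\Gamma_G(C_m))+2=\nu(G)+2$. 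Since $G\setminus\Gamma_G(C_m)$ is an induced subgraph of $G$, \autoref{short exact sequence}$(i)$ gives $\reg{I(G)}\ge\nu(G)+2$; together with $(i)$ this forces $\reg{I(G)}=\nu(G)+2$.

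For the converse I would argue by contrapositive: if $\nu(G\setminus\Gamma_G(C_m))<\nu(G)$, then \autoref{implic_one_bad_cycle} gives directly $\reg{I(G)}=\nu(G)+1$, hence $\reg{I(G)}\ne\nu(G)+2$. The two implications together yield the equivalence in $(ii)$.

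I do not expect an obstacle in the theorem itself, since everything reduces to the assembly above; the genuine work sits in \autoref{implic_one_bad_cycle}, whose proof is the delicate point. That proposition rests on splitting via \autoref{short exact sequence}$(ii)$ at the vertex of $F_i\cap C_m$ (for an index $i$ with $\nu(H_i)<\nu(F_i)$, whose existence comes from the contrapositive of \autoref{components_case_only_one_bad}) and on a comparison of induced matchings of the components $F_i$ with those of their truncations $H_i$ in order to bound $\reg{I(G\setminus x)}$ and $\reg{I(G\setminus N[x])}$. With that machinery already available, the bookkeeping above completes the proof.
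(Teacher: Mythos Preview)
Your proposal is correct and follows essentially the same approach as the paper: part $(i)$ from \autoref{lower_bound_Katzman} and \autoref{general_prop_bicyclic}$(ii)$, the forward direction of $(ii)$ by computing $\reg{I(G\setminus\Gamma_G(C_m))}=\nu(G)+2$ via the decomposition in \autoref{rem_Case_I}$(i)$ and passing to the induced subgraph, and the converse by contrapositive from \autoref{implic_one_bad_cycle}. Your explicit mention of \cite[Lemma 3.2]{HOA_TAM} for additivity and of \autoref{lower_bound_Katzman} for the lower bound in $(i)$ are details the paper leaves implicit, but the argument is the same.
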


\subsection{Case II}\hspace{\fill}\\
The object of study of this subsection is the case where $n,m \equiv 2 \; (\text{mod}\;3)$, $l \ge 3$, and in particular when $\reg{I(G)}=\nu(G)+3$.
More specifically, we shall give necessary and sufficient conditions for the equality $\reg{I(G)}=\nu(G)+3$.

\begin{nota}
	\label{notation_trees_in_bicyclic}
	Let $G$ be a bicyclic graph with dumbbell $\cnplcm$ such that $n,m \equiv 2 \; (\Mod 3)$ and $l \ge 3$. 
	As in \autoref{only_one_bad_cycle}, let $F_1,\ldots,F_c$ be the components of the graph $S_{G,0}(C_n)$.
	We order the $F_i$'s in such a way that $F_1$ is a unicyclic 
	graph with cycle $C_m$, and for all $i > 1$ we have that $F_i$ is a tree.
	The graph $S_{G,2}(C_n)$  can be decomposed in components  $H_1, \ldots, H_c$ where 
	$$
	H_i = F_i \setminus \{v \in G \mid d(v, C_n) \le 1 \}.
	$$ 
\end{nota}

\begin{rem}
	\label{rem_Case_II}
	From the previous notation get the following simple remarks.
	\begin{enumerate}[(i)]
		\item The graph $G \setminus \Gamma_{G}(C_n)$ has a decomposition of the form 
		$$
		G \setminus \Gamma_{G}(C_n)= C_n \bigcup \left(\bigcup_{i=1}^cH_i\right),
		$$
		and in particular
		$$
		\nu\left(G\setminus \Gamma_G(C_n)\right) = \nu\left(C_n\right) + \sum_{i=1}^{c}\nu\left(H_i\right)
		$$
		because $d(C_n,H_i)\ge 2$ for all $1\le i \le  c$ and $d(H_i,H_j)\ge 2$ for all $1\le i<j \le c$. 
		\item Similarly, the graph $G \setminus \Gamma_G(C_n\cup C_m)$ has a decomposition of the form
		$$
		G \setminus \Gamma_G(C_n\cup C_m)=C_n \bigcup \left(\bigcup_{i=2}^c H_i \right)  \bigcup \left( H_1 \setminus \Gamma_{H_1}(C_m) \right),
		$$
		and in particular
		$$
		\nu(G \setminus \Gamma_G(C_n\cup C_m)) = \nu(C_n) +  \sum_{i=2}^c \nu(H_i) + \nu(H_1 \setminus \Gamma_{H_1}(C_m)).
		$$
		\item For each $i=1,\ldots,c$, we have that $\lvert F_i \cap C_n \rvert=1$.
		\item The statement of \autoref{components_case_only_one_bad} also holds in this case, that is, if $\nu(H_i)=\nu(F_i)$ for all $1 \le i \le c$, then $\nu(G \setminus \Gamma_G(C_n))=\nu(G)$.
		\item  	Due to the assumption $l\ge 3$, then we have that $C_m$ must be an induced subgraph of $H_1$.
		During this subsection and the next one we shall fundamentally use this fact, and it will allow us to inductively ``separate''  the two cycles $C_n$ and $C_m$.
	\end{enumerate}
\end{rem}

\begin{lem}
	\label{implication_trees_nu_gamma}
	Adopt \autoref{notation_trees_in_bicyclic}.
	If $\nu(H_i)=\nu(F_i)$ for all $1\le i \le c$ and $\nu(H_1)=\nu(H_1 \setminus \Gamma_{H_1}(C_m))$, then
	$$
	\nu\left(G\setminus \Gamma_G(C_n\cup C_m)\right) = \nu(G).
	$$
	\begin{proof}
		Since $G \setminus \Gamma_G(C_n\cup C_m)$ is an induced subgraph of $G$, then we have $\nu(G \setminus \Gamma_G(C_n\cup C_m)) \le \nu(G)$.
		From \autoref{rem_Case_II}$(ii)$ we get
		\begin{align*}
		\nu(G \setminus \Gamma_G(C_n\cup C_m)) &= \nu(C_n) +  \sum_{i=2}^c \nu(H_i) + \nu(H_1 \setminus \Gamma_{H_1}(C_m))\\
		&= \nu(C_n) +  \sum_{i=2}^c \nu(H_i) + \nu(H_1)\\
		&= \nu(C_n) + \sum_{i=1}^c \nu(F_i)\\
		&\ge \nu(G),
		\end{align*} 
		and so $\nu\left(G\setminus \Gamma_G(C_n\cup C_m)\right) = \nu(G)$.
	\end{proof}
\end{lem}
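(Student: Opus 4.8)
The plan is to reduce the claim to the single inequality $\nu(C_n)+\sum_{i=1}^c\nu(F_i)\ge\nu(G)$. One direction is free: since $G\setminus\Gamma_G(C_n\cup C_m)$ is an induced subgraph of $G$ and the induced matching number is monotone under passing to induced subgraphs, we have $\nu\left(G\setminus\Gamma_G(C_n\cup C_m)\right)\le\nu(G)$, so only the reverse inequality needs work.

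First I would rewrite the left-hand side via the decomposition recorded in \autoref{rem_Case_II}$(ii)$,
$$
\nu\left(G\setminus\Gamma_G(C_n\cup C_m)\right)=\nu(C_n)+\sum_{i=2}^c\nu(H_i)+\nu\left(H_1\setminus\Gamma_{H_1}(C_m)\right),
$$
and then substitute the two hypotheses: $\nu(H_1)=\nu\left(H_1\setminus\Gamma_{H_1}(C_m)\right)$ replaces the last summand by $\nu(H_1)$, and $\nu(H_i)=\nu(F_i)$ for every $i$ (including $i=1$) collapses the whole sum to $\nu(C_n)+\sum_{i=1}^c\nu(F_i)$. Hence the lemma follows once we show $\nu(C_n)+\sum_{i=1}^c\nu(F_i)\ge\nu(G)$.

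For that last inequality the key point is that $G$ is built from $C_n$ and the components $F_1,\dots,F_c$ of $S_{G,0}(C_n)$ in an edge-disjoint way: by the very definition of $E(S_{G,0}(C_n))$ every edge of $G$ lies either inside $C_n$ or inside exactly one $F_i$, and the $F_i$ are pairwise vertex-disjoint, each meeting $C_n$ in a single high-degree vertex (\autoref{rem_Case_II}$(iii)$). Given a maximal induced matching $\matching$ of $G$, I would partition it as $\matching=\matching_0\sqcup\matching_1\sqcup\cdots\sqcup\matching_c$, where $\matching_0=\matching\cap E(C_n)$ and $\matching_i=\matching\cap E(F_i)$. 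Because $\matching$ is an induced matching of $G$, the graph $G\big[\bigcup_{e\in\matching_0}e\big]$ has exactly the edges of $\matching_0$; restricting to the vertex set of $C_n$ shows $\matching_0$ is an induced matching of $C_n$, so $\lvert\matching_0\rvert\le\nu(C_n)$, and the identical argument gives $\lvert\matching_i\rvert\le\nu(F_i)$ for each $i$. Summing yields $\nu(G)=\lvert\matching\rvert\le\nu(C_n)+\sum_{i=1}^c\nu(F_i)$, which closes the chain of (in)equalities and proves the lemma.

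The only delicate point is the bookkeeping in the last paragraph: one must check that the restriction of an induced matching of $G$ to the edges of $C_n$ (or of $F_i$) is still an induced matching of that subgraph, even though neither $C_n$ nor the $F_i$ is an induced subgraph of $G$. This works because being induced only forbids extra edges \emph{among the matched vertices}, and such edges are inherited by every subgraph on the same vertex set; the role of the definition of $S_{G,0}(C_n)$ --- discarding the edges internal to $C_n$ --- is precisely to guarantee that the pieces $\matching_1,\dots,\matching_c$ together with $\matching_0$ genuinely partition $\matching$.
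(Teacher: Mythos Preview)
Your proof is correct and follows exactly the same route as the paper: one inequality from the induced-subgraph monotonicity, then the decomposition from \autoref{rem_Case_II}$(ii)$ combined with the two hypotheses, reducing everything to $\nu(C_n)+\sum_{i=1}^c\nu(F_i)\ge\nu(G)$. The only difference is that the paper simply asserts this last inequality, whereas you spell out the edge-partition argument showing that a maximal induced matching of $G$ splits into induced matchings of $C_n$ and of the $F_i$; your added justification is correct and is precisely the content implicit in the paper's bare ``$\ge\nu(G)$''.
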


\begin{prop}
	\label{implication_two_bad_cycles_reg_3}
	Adopt \autoref{notation_trees_in_bicyclic}.
	If $\nu(G\setminus \Gamma_G(C_n\cup C_m)) < \nu(G)$, then 
	$$
	\reg{I(G)} \le \nu(G) + 2.
	$$
	\begin{proof}
		It follows from the contrapositive of \autoref{implication_trees_nu_gamma}, that there exists some $i$ with $\nu(H_i) < \nu(F_i)$ or we have $\nu(H_1 \setminus \Gamma_{H_1}(C_m)) < \nu(H_1)$.
		Then we divide the proof into two cases.
		
		\underline{Case 1.} In this case we assume that for some $1\le  i \le c$ we have $\nu(H_i) < \nu(F_i)$.
		This case follows similarly to \autoref{implic_one_bad_cycle}.		
		Let $x$ be the vertex in $F_i \cap C_n$, let us use the notations $G^\prime=G\setminus x$ and $G^{\prime\prime} =G \setminus N[x]$.
		Once more, we have the inequality 
		$$
		\reg{I(G)} \le \max\{\reg{I(G^\prime)}, \reg{I(G^{\prime\prime})}+1 \}.
		$$  
		Note that both $G^\prime$ and $G^{\prime\prime}$ are unicyclic graphs, and so we have $\reg{I(G^\prime)} \le \nu(G^\prime) + 2$ and $\reg{I(G^{\prime\prime})} \le \nu(G^{\prime\prime}) + 2$ (see \autoref{Upper_bound_reg_decyc}).
		Since we have $\nu(G^{\prime}) \le \nu(G)$ and $\nu(G^{\prime\prime})+1\le \nu(G)$ (see the proof of \autoref{implic_one_bad_cycle}), then the inequality follows in this case.
		
		\underline{Case 2.}	Now we suppose that $\nu(H_1 \setminus \Gamma_{H_1}(C_m)) < \nu(H_1)$.	
		Let $x$ be the vertex in $F_1 \cap C_n$, let us use the notations $G^\prime=G\setminus x$ and $G^{\prime\prime} =G \setminus N[x]$.
		We use the inequality 
		$$
		\reg{I(G)} \le \max\{\reg{I(G^\prime)}, \reg{I(G^{\prime\prime})}+1 \}.
		$$ 
		The graphs $G^{\prime}$ and $G^{\prime\prime}$ are unicyclic. 
		For the graph $G^\prime$ we have $\reg{I(G^{\prime})} \le \nu(G^{\prime})+2 \le \nu(G) + 2$.
		The graph $G^{\prime\prime}$ can be given as the disjoint union of $H_1$ and another graph  $H$ defined by $H=G \setminus (F_1 \cup N[x])$, that is $G^{\prime\prime}=H \cup H_1$ and $H \cap H_1 = \emptyset$.
		Since $H$ is a forest, then using \cite[Theorem 1.1]{REG_UNICYCLIC_GRAPH} we obtain that $\reg{I(G^{\prime\prime})} \le \nu(G^{\prime\prime}) + 1$.
		So we get the  inequality $\reg{I(G^{\prime\prime})}+1 \le \nu(G^{\prime\prime}) +2 \le \nu(G) +2$, because $G
		^{\prime\prime}$ is an induced subgraph of $G$.
	\end{proof}
\end{prop}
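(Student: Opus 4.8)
The plan is to argue by the contrapositive of \autoref{implication_trees_nu_gamma}: the hypothesis $\nu(G\setminus \Gamma_G(C_n\cup C_m)) < \nu(G)$ forces that \emph{either} some component satisfies $\nu(H_i) < \nu(F_i)$, \emph{or} the cycle $C_m$ inside $F_1$ satisfies $\nu(H_1 \setminus \Gamma_{H_1}(C_m)) < \nu(H_1)$. In each case I would select a vertex $x$ lying on the cycle $C_n$, apply the deletion inequality \autoref{short exact sequence}$(ii)$, namely $\reg{I(G)} \le \max\{\reg{I(G\setminus x)},\, \reg{I(G\setminus N[x])}+1\}$, and verify that both terms on the right are $\le \nu(G)+2$. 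The structural fact that makes this work is that, since $G$ is bicyclic with the only cycles sitting in $C_n$ and $C_m$, deleting $x\in C_n$ (or $N[x]$) destroys $C_n$ while leaving $C_m$ intact (here $l\ge 3$ guarantees $C_m$ is not touched), so $G\setminus x$ and $G\setminus N[x]$ are unicyclic, and \autoref{Upper_bound_reg_decyc} — or \cite[Theorem 1.1]{REG_UNICYCLIC_GRAPH} — applies to them.

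In the first case I take $x$ to be the unique vertex of $F_i\cap C_n$ (\autoref{rem_Case_II}$(iii)$). Then $\reg{I(G\setminus x)} \le \nu(G\setminus x)+2 \le \nu(G)+2$ is immediate. For $G\setminus N[x]$ I would transcribe the induced-matching argument from the proof of \autoref{implic_one_bad_cycle}: writing $G\setminus N[x] = H \cup H_i$ with $H = G\setminus(F_i\cup N_G[x])$ disjoint from $H_i$, a maximal induced matching of $H\cup H_i$ can be enlarged by replacing its $H_i$-part with a strictly larger matching inside $F_i$ (which exists because $\nu(H_i)<\nu(F_i)$), yielding an induced matching of the induced subgraph $H\cup F_i\subseteq G$; this gives $\nu(G\setminus N[x]) + 1 \le \nu(G)$, so $\reg{I(G\setminus N[x])}+1 \le \nu(G\setminus N[x])+3 \le \nu(G)+2$.

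In the second case I take $x$ to be the vertex of $F_1\cap C_n$. Again $\reg{I(G\setminus x)} \le \nu(G\setminus x)+2 \le \nu(G)+2$. Now $G\setminus N[x] = H \cup H_1$ with $H=G\setminus(F_1\cup N_G[x])$, and $H$ is a forest since the only cycle besides $C_n$ lies in $F_1$, which has been removed. The key point is that the hypothesis $\nu(H_1\setminus\Gamma_{H_1}(C_m)) < \nu(H_1)$ is precisely the condition under which the unicyclic characterization \cite[Theorem 1.1]{REG_UNICYCLIC_GRAPH} (with $m\equiv 2\ (\Mod 3)$) gives $\reg{I(H_1)} = \nu(H_1)+1$ rather than $\nu(H_1)+2$. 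Combining this with $\reg{I(H)} = \nu(H)+1$ (\autoref{reg_Forests}), the disjoint-union formula \cite[Lemma 3.2]{HOA_TAM}, and $\nu(G\setminus N[x]) = \nu(H)+\nu(H_1)$ (valid because $d(H,H_1)\ge 2$), one obtains $\reg{I(G\setminus N[x])} = \nu(G\setminus N[x])+1 \le \nu(G)+1$, whence $\reg{I(G\setminus N[x])}+1 \le \nu(G)+2$.

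I expect the main obstacle to be the bookkeeping in the second case: one must check carefully that $H$ is genuinely acyclic and that $H_1$ is the only non-tree component of $G\setminus N[x]$, and, most importantly, verify that the hypothesis $\nu(H_1\setminus\Gamma_{H_1}(C_m)) < \nu(H_1)$ really does pin down $\reg{I(H_1)}=\nu(H_1)+1$ and not merely the weaker bound $\le\nu(H_1)+2$ — it is this sharpening that makes the estimate for $G\setminus N[x]$ succeed. The first case, by contrast, is essentially a repetition of the argument already developed for \autoref{implic_one_bad_cycle}.
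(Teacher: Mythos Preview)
Your proposal is correct and follows essentially the same approach as the paper's proof: split into the same two cases via the contrapositive of \autoref{implication_trees_nu_gamma}, delete the vertex $x\in F_i\cap C_n$, and bound both terms of the deletion inequality. Your Case~2 is in fact clearer than the paper's, since you make explicit that the hypothesis $\nu(H_1\setminus\Gamma_{H_1}(C_m))<\nu(H_1)$ is exactly what pins down $\reg I(H_1)=\nu(H_1)+1$ via the unicyclic characterization; the paper invokes \cite[Theorem 1.1]{REG_UNICYCLIC_GRAPH} for $G''$ directly with the terse phrase ``since $H$ is a forest'', leaving the reader to unpack that the Case~2 hypothesis is what selects the $\nu+1$ branch.
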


Now we are ready to completely describe the case where $\reg{I(G)}=\nu(G)+3$.

\begin{theorem}
	\label{thm_case_nu+3}
	Let $G$ be a bicyclic graph with dumbbell $\cnplcm$.
	Then $\reg{I(G)}=\nu(G)+3$ if and only if the following conditions are satisfied:
	\begin{enumerate}[(i)]
		\item $n \equiv 2 \; (\text{\normalfont mod}\;3)$;
		\item $m \equiv 2 \; (\text{\normalfont mod}\;3)$;
		\item $l \ge 3$;
		\item $\nu\left(G\setminus \Gamma_G(C_n\cup C_m)\right) = \nu(G)$.
	\end{enumerate}
	\begin{proof}
		In \autoref{general_prop_bicyclic} we proved that the conditions $(i)$, $(ii)$ and $(iii)$ are necessary, and from \autoref{implication_two_bad_cycles_reg_3} we have that the condition $(iv)$ is also necessary.
		Hence, we only need to prove that $\reg{I(G)}=\nu(G)+3$ under these conditions.
		
		Let $W=G\setminus \Gamma_G(C_n\cup C_m)$.
		From \autoref{rem_Case_II}, and using \cite[Theorem 1.1]{REG_UNICYCLIC_GRAPH} and \autoref{reg_Forests}, we can compute
		\begin{align*}
		\reg{\big(I(W)\big)} &= \reg{\big(I(C_n)\big)} + \reg{\big(I\big(\cup_{i=2}^c H_i \big)\big)}  + \reg{\big(I\big( H_1 \setminus \Gamma_{H_1}(C_m) \big)\big)} - 2\\
		&=(\nu(C_n)+2) + (\nu(\cup_{i=2}^c H_i) + 1) + (\nu(H_1 \setminus \Gamma_{H_1}(C_m)) + 2) - 2\\
		&=\nu(W) + 3\\
		&=\nu(G) + 3.
		\end{align*} 
		Since $W$ is an induced subgraph of $G$ then we get 
		$$
		\reg{I(G)} \ge \reg{I(W))}=\nu(G)+3,
		$$
		and so from \autoref{Upper_bound_reg_decyc} the equality it is obtained.
	\end{proof}
\end{theorem}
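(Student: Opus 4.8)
The plan is to split the biconditional into a bundle of upper‑bound facts (for necessity) and a single induced‑subgraph lower bound (for sufficiency), both drawing on material already established earlier in the section.

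\textbf{Necessity.} I would argue that if any of (i)--(iv) fails, a bound strictly sharper than $\nu(G)+3$ is available. If $n\equiv 0,1$ or $m\equiv 0,1\pmod 3$, then \autoref{general_prop_bicyclic}(i)--(ii) gives $\reg{I(G)}\le\nu(G)+2$; if $l\le 2$, then \autoref{general_prop_bicyclic}(iii) gives the same; and if $\nu\!\left(G\setminus\Gamma_G(C_n\cup C_m)\right)<\nu(G)$, then \autoref{implication_two_bad_cycles_reg_3} once more gives $\reg{I(G)}\le\nu(G)+2$. Hence $\reg{I(G)}=\nu(G)+3$ can only occur when all of (i)--(iv) hold, and this half costs nothing beyond quoting those statements.

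\textbf{Sufficiency.} Assume (i)--(iv). The upper bound $\reg{I(G)}\le\nu(G)+3$ comes for free: a bicyclic graph has decycling number at most $2$, so \autoref{Upper_bound_reg_decyc} applies. For the lower bound I would exhibit the induced subgraph $W:=G\setminus\Gamma_G(C_n\cup C_m)$ and prove $\reg{I(W)}=\nu(G)+3$; then \autoref{short exact sequence}(i) finishes the proof. To evaluate $\reg{I(W)}$ I would use the decomposition recorded in \autoref{rem_Case_II}(ii): $W$ is the disjoint union of $C_n$, of the trees $H_2,\dots,H_c$, and of $H_1\setminus\Gamma_{H_1}(C_m)$, where this last piece still contains $C_m$ as an induced subgraph (here $l\ge 3$ is used, see \autoref{rem_Case_II}(v)) and is a forest away from $C_m$. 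Since the pieces sit at pairwise distance $\ge 2$, regularity is additive over them (\cite[Lemma 3.2]{HOA_TAM}); feeding in $\reg{I(C_n)}=\nu(C_n)+2$ and $\reg{I(C_m)}=\nu(C_m)+2$ from \autoref{regularity Cn} (using $n,m\equiv 2$), together with $\reg{I(T)}=\nu(T)+1$ for each forest $T$ (\autoref{reg_Forests}), the arithmetic collapses to $\reg{I(W)}=\nu(W)+3$. Condition (iv) says $\nu(W)=\nu(G)$, so $\reg{I(G)}\ge\reg{I(W)}=\nu(G)+3$, and with the upper bound this is an equality.

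\textbf{Where the real work is.} The assembly above is essentially bookkeeping; the substance lies in the structural inputs, and that is where I would concentrate. One needs \autoref{rem_Case_II}: that $\lvert F_i\cap C_n\rvert=1$, that deleting $\Gamma_G(C_n\cup C_m)$ genuinely pulls the components apart to distance $\ge 2$ so that induced matching numbers and regularities add, and --- the one genuinely non‑formal point --- that the hypothesis $l\ge 3$ is exactly what keeps $C_m$ an \emph{induced} subgraph of $H_1$, thereby licensing the ``separate the two cycles'' step. I would also expect \autoref{implication_two_bad_cycles_reg_3} (which supplies the necessity of (iv)) to be the most delicate ingredient, since its argument branches on whether the drop $\nu\!\left(G\setminus\Gamma_G(C_n\cup C_m)\right)<\nu(G)$ is witnessed inside one of the trees $H_i$ or inside $H_1$ near $C_m$, each branch handled by deleting the cut vertex in $F_i\cap C_n$ (resp.\ $F_1\cap C_n$) and bounding the two resulting unicyclic or forest graphs via \autoref{Upper_bound_reg_decyc} and \cite[Theorem 1.1]{REG_UNICYCLIC_GRAPH}.
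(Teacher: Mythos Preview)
Your proposal is correct and follows essentially the same route as the paper: necessity is read off from \autoref{general_prop_bicyclic} and \autoref{implication_two_bad_cycles_reg_3}, and sufficiency is obtained by computing $\reg I(W)=\nu(W)+3=\nu(G)+3$ for the induced subgraph $W=G\setminus\Gamma_G(C_n\cup C_m)$ via the decomposition of \autoref{rem_Case_II}(ii), then invoking \autoref{Upper_bound_reg_decyc} for the matching upper bound. The only cosmetic difference is that the paper bundles $H_1\setminus\Gamma_{H_1}(C_m)$ as a single unicyclic piece and applies \cite[Theorem~1.1]{REG_UNICYCLIC_GRAPH}, whereas you further split it into $C_m$ plus forests and apply \autoref{regularity Cn} and \autoref{reg_Forests} directly; the arithmetic is identical.
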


\subsection{Case III}\hspace{\fill}\\
In this subsection we assume that $G$ is a bicyclic graph with dumbbell $\cnplcm$ such that $n,m \equiv 2 \; (\text{mod}\;3)$ and $l \ge 3$. 
Now that we have characterized when $\reg{I(G)}=\nu(G)+3$, then we want to distinguish between $\reg{I(G)}=\nu(G)+1$ and $\reg{I(G)}=\nu(G)+2$.

\begin{lem}
	\label{lem_reg_nu+2_both_cycles}
	Adopt \autoref{notation_trees_in_bicyclic}.
	If $\nu(G) - \nu\left(G\setminus \Gamma_G(C_n\cup C_m)\right) = 1$ then
	$$
	\reg{I(G)} = \nu(G) + 2.
	$$ 
	\begin{proof}
		From \autoref{thm_case_nu+3} we have that $\reg(I(G)) \le \nu(G) + 2$.
		Using the same method as in \autoref{thm_case_nu+3}, we can obtain a lower bound 
		$$
		\reg{I(G)}\ge \reg{I(G\setminus \Gamma_G(C_n\cup C_m))} = \nu(G\setminus \Gamma_G(C_n\cup C_m)) + 3 = \nu(G) + 2,
		$$
		and so the equality follows.
	\end{proof}
\end{lem}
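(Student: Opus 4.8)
The plan is to pin down $\reg{I(G)}$ by trapping it between an upper bound and a lower bound that both equal $\nu(G)+2$. For the upper bound, I would observe that the hypothesis $\nu(G)-\nu\!\left(G\setminus \Gamma_G(C_n\cup C_m)\right)=1$ in particular forces $\nu\!\left(G\setminus \Gamma_G(C_n\cup C_m)\right)<\nu(G)$; this is exactly the hypothesis of \autoref{implication_two_bad_cycles_reg_3} (equivalently, it is the failure of condition $(iv)$ of \autoref{thm_case_nu+3}), so that $\reg{I(G)}\le \nu(G)+2$.

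For the lower bound, put $W=G\setminus \Gamma_G(C_n\cup C_m)$. Since $W$ is an induced subgraph of $G$, \autoref{short exact sequence}$(i)$ gives $\reg{I(G)}\ge \reg{I(W)}$, so it is enough to compute $\reg{I(W)}$, and here I would simply rerun the bookkeeping from the proof of \autoref{thm_case_nu+3}. By \autoref{rem_Case_II}$(ii)$, $W$ decomposes as the disjoint union $C_n\cup\big(\bigcup_{i=2}^c H_i\big)\cup\big(H_1\setminus\Gamma_{H_1}(C_m)\big)$ with the three pieces at pairwise distance $\ge 2$, so by additivity of regularity over disjoint unions (\cite[Lemma 3.2]{HOA_TAM}) we have $\reg{I(W)}=\reg{I(C_n)}+\reg{I(\bigcup_{i=2}^c H_i)}+\reg{I(H_1\setminus\Gamma_{H_1}(C_m))}-2$. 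Now $n\equiv 2\;(\Mod 3)$ gives $\reg{I(C_n)}=\nu(C_n)+2$ by \autoref{regularity Cn}; the $H_i$ with $i\ge 2$ form a forest, so \autoref{reg_Forests} applies to them; and since $l\ge 3$, by \autoref{rem_Case_II}$(v)$ the cycle $C_m$ is an induced subgraph of $H_1$, whence $H_1\setminus\Gamma_{H_1}(C_m)$ is $C_m$ together with a forest, so (using $m\equiv 2\;(\Mod 3)$, \autoref{regularity Cn}, \cite[Lemma 3.2]{HOA_TAM} and \autoref{reg_Forests}) $\reg{I(H_1\setminus\Gamma_{H_1}(C_m))}=\nu(H_1\setminus\Gamma_{H_1}(C_m))+2$. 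Summing these three contributions and comparing with the $\nu$-decomposition in \autoref{rem_Case_II}$(ii)$ yields $\reg{I(W)}=\nu(W)+3$; since the hypothesis says $\nu(W)=\nu(G)-1$, this equals $\nu(G)+2$, and combining with the upper bound finishes the proof.

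I do not expect a serious obstacle: the hard work has already been done in \autoref{thm_case_nu+3}, and the present lemma is essentially that same computation run with $\nu(W)=\nu(G)-1$ rather than $\nu(W)=\nu(G)$. The only points demanding care are checking that the three components of $W$ are genuinely at distance $\ge 2$ from one another (so that the regularity of $W$ splits as a sum) and that $H_1\setminus\Gamma_{H_1}(C_m)$ contains no cycle other than $C_m$; both rest on the structural conventions of \autoref{notation_trees_in_bicyclic} and on the assumption $l\ge 3$.
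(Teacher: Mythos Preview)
Your proposal is correct and follows essentially the same route as the paper: the upper bound comes from \autoref{thm_case_nu+3} (equivalently \autoref{implication_two_bad_cycles_reg_3}) via the failure of condition $(iv)$, and the lower bound is the computation $\reg{I(W)}=\nu(W)+3$ for $W=G\setminus\Gamma_G(C_n\cup C_m)$, carried out exactly as in the proof of \autoref{thm_case_nu+3}. The paper simply abbreviates the lower bound by saying ``using the same method as in \autoref{thm_case_nu+3}'', whereas you spell it out.
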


\begin{lem}
	\label{lem_reg_nu+2_one_cycle}
	Adopt \autoref{notation_trees_in_bicyclic}.
	If $\nu(G)=\nu(G\setminus \Gamma_G(C_n))$ then 
	$$
	\reg{I(G)} \ge \nu(G) + 2.
	$$
	Symmetrically, the same argument holds for $C_m$.
	\begin{proof}
		The proof follows similarly to \autoref{thm_only_one_bad_cycle}.
		From \autoref{rem_Case_II}$(i)$,  \cite[Theorem 1.1]{REG_UNICYCLIC_GRAPH} and \autoref{reg_Forests} we get
		\begin{align*}
		\reg{I(G\setminus \Gamma_G(C_n))} &= \reg{I(C_n)} + \reg{I(\cup_{i=1}^cH_i)} - 1\\
		&= (\nu(C_n) + 2) +(\nu(\cup_{i=1}^cH_i) + 1) -1 \\
		&=\nu(G \setminus \Gamma_G(C_n)) + 2\\
		&=\nu(G) + 2.
		\end{align*}
		So the inequality follows from the fact that $G\setminus \Gamma_G(C_n)$ is an induced subgraph of $G$.
	\end{proof}
\end{lem}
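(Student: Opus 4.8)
The plan is to exhibit an induced subgraph of $G$ whose edge ideal has regularity at least $\nu(G)+2$ and then transfer this bound to $G$ through \autoref{short exact sequence}$(i)$; this is the same strategy used for \autoref{thm_only_one_bad_cycle}. The subgraph to use is $W:=G\setminus\Gamma_G(C_n)$. By \autoref{rem_Case_II}$(i)$, $W$ decomposes as the disjoint union $C_n\cup(\cup_{i=1}^{c}H_i)$ whose components lie at pairwise distance $\ge 2$ in $G$, so that
$$
\nu(W)=\nu(C_n)+\sum_{i=1}^{c}\nu(H_i)=\nu(C_n)+\nu(\cup_{i=1}^{c}H_i).
$$
By the ordering fixed in \autoref{notation_trees_in_bicyclic}, $H_1$ is the (possibly disconnected) unicyclic piece carrying the cycle $C_m$ and $H_2,\dots,H_c$ are forests; note that the assumption $l\ge 3$ is exactly what guarantees $C_m$ survives the passage from $F_1$ to $H_1$, so that $C_m\subseteq H_1\subseteq W$.

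Next I would compute $\reg{I(W)}$ via the additivity of regularity over disjoint unions, \cite[Lemma 3.2]{HOA_TAM}:
$$
\reg{I(W)}=\reg{I(C_n)}+\reg{I(\cup_{i=1}^{c}H_i)}-1.
$$
Since $n\equiv 2\;(\Mod 3)$, \autoref{regularity Cn} gives $\reg{I(C_n)}=\nu(C_n)+2$, and \autoref{lower_bound_Katzman} gives $\reg{I(\cup_{i=1}^{c}H_i)}\ge\nu(\cup_{i=1}^{c}H_i)+1$. Substituting and using the displayed identity for $\nu(W)$,
$$
\reg{I(W)}\ge(\nu(C_n)+2)+(\nu(\cup_{i=1}^{c}H_i)+1)-1=\nu(W)+2.
$$
Invoking the hypothesis $\nu(G)=\nu(G\setminus\Gamma_G(C_n))=\nu(W)$ and the fact that $W$ is an induced subgraph of $G$, \autoref{short exact sequence}$(i)$ yields $\reg{I(G)}\ge\reg{I(W)}\ge\nu(G)+2$. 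The assertion for $C_m$ is the same computation with the two cycles interchanged, which is legitimate since both satisfy $\equiv 2\;(\Mod 3)$.

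I do not foresee a genuine obstacle; the only subtle point is that $\reg{I(\cup_{i=1}^{c}H_i)}$ need not equal $\nu(\cup_{i=1}^{c}H_i)+1$, because $H_1$ is unicyclic with a cycle $C_m$ with $m\equiv 2\;(\Mod 3)$, so by \cite[Theorem 1.1]{REG_UNICYCLIC_GRAPH} its regularity could be $\nu(H_1)+2$. This causes no trouble here: only the inequality $\reg{I(\cup_{i=1}^{c}H_i)}\ge\nu(\cup_{i=1}^{c}H_i)+1$ is needed, and it always holds by \autoref{lower_bound_Katzman}. (When the equality fails one actually obtains the stronger bound $\reg{I(G)}\ge\nu(G)+3$, consistent with \autoref{thm_case_nu+3}.)
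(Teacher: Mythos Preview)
Your proof is correct and follows essentially the same route as the paper's: set $W=G\setminus\Gamma_G(C_n)$, decompose it as $C_n\cup(\cup_{i=1}^{c}H_i)$ via \autoref{rem_Case_II}$(i)$, apply additivity of regularity over disjoint unions, use $\reg I(C_n)=\nu(C_n)+2$, combine with the hypothesis $\nu(W)=\nu(G)$, and transfer the bound to $G$ via the induced-subgraph inequality. Your version is in fact slightly more careful at one point: the paper asserts the \emph{equality} $\reg I(\cup_{i=1}^{c}H_i)=\nu(\cup_{i=1}^{c}H_i)+1$, but since $H_1$ is unicyclic with cycle $C_m$ and $m\equiv 2\;(\Mod 3)$, \cite[Theorem 1.1]{REG_UNICYCLIC_GRAPH} allows $\reg I(H_1)=\nu(H_1)+2$; you correctly note that only the Katzman lower bound $\ge$ is needed here, and that is all the argument requires.
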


The following very simple logical argument will be used several times in the next theorem.

\begin{obs}
	\label{obs_Logic}		
	Let $P_1,P_2,P_3$ be boolean values, (i.e. true or false). 
	Assume that $P_1$ is true if and only if $P_2$ and $P_3$ are true, that is
	$$
	P_1 \;\Longleftrightarrow\; \left( P_2 \quad \wedge \quad P_3\right).
	$$ 
	Suppose that if $P_2$ is true then $P_3$ is false, that is
	$$
	P_2 \Longrightarrow \lnot P_3.
	$$
	Then, $P_1$ is false.	
\end{obs}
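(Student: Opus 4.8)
The plan is to argue by contradiction, directly unwinding the two hypotheses. First I would assume, toward a contradiction, that $P_1$ is true. Then the biconditional $P_1 \Longleftrightarrow (P_2 \wedge P_3)$ forces the conjunction $P_2 \wedge P_3$ to be true, and hence both $P_2$ and $P_3$ are true.

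Next I would invoke the second hypothesis $P_2 \Longrightarrow \lnot P_3$: since $P_2$ holds, it yields that $P_3$ is false. This directly contradicts the conclusion of the previous step that $P_3$ is true. Therefore the assumption $P_1$ is untenable, and $P_1$ is false.

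There is essentially no obstacle here; the statement is a propositional tautology and could equally well be verified by checking the (at most four relevant) rows of a truth table, or by the chain $P_1 \Rightarrow (P_2 \wedge P_3) \Rightarrow P_2 \Rightarrow \lnot P_3$, which together with $P_1 \Rightarrow P_3$ gives $P_1 \Rightarrow (P_3 \wedge \lnot P_3)$, i.e. $P_1$ implies a contradiction. The only points to be mildly careful about are to use the biconditional in the correct direction — only the implication $P_1 \Rightarrow (P_2 \wedge P_3)$ is needed — and to observe that the converse direction of the biconditional plays no role in the argument.
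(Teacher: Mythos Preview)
Your proof is correct. The paper states this observation without proof, treating it as self-evident; your contradiction argument (or the equivalent direct chain you mention) is exactly the intended justification, and your remark that only the direction $P_1 \Rightarrow (P_2 \wedge P_3)$ is needed is accurate.
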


\begin{nota}
	Let $X$ be a mathematical expression. 
	Then, $P[X]$ represents a boolean value, which is true if $X$ is satisfied and false otherwise.
\end{nota}

Taking into account the induced matching numbers $\nu(G)$, $\nu(G\setminus \Gamma_G(C_n\cup C_m))$, $\nu(G\setminus \Gamma_G(C_n))$ and $\nu(G \setminus \Gamma_G(C_m))$, we can give necessary and sufficient conditions for the equality $\reg{I(G)} = \nu(G) + 1$. 

\begin{theorem}
	\label{charact_thm_nu+1_in_bad_case}
	Let $G$ be a bicyclic graph with dumbbell $\cnplcm$ such that $n,m \equiv 2 \; (\Mod 3)$ and $l \ge 3$. 
	Then $\reg{I(G)}=\nu(G)+1$ if and only if the following conditions are satisfied: 
	\begin{enumerate}[(i)]
		\item $\nu(G)-\nu(G\setminus \Gamma_G(C_n\cup C_m)) > 1$;
		\item $\nu(G) > \nu(G \setminus \Gamma_G(C_n))$;
		\item $\nu(G) > \nu(G \setminus \Gamma_G(C_m))$.
	\end{enumerate}
	\begin{proof}
		From \autoref{lem_reg_nu+2_both_cycles} and \autoref{lem_reg_nu+2_one_cycle}, we have that the conditions $(i)$, $(ii)$ and $(iii)$ are necessary. 
		Hence, it is enough to prove $\reg{I(G)} \le \nu(G)+1$ under these conditions.
		
		Again, for any $x \in G$ we denote $G^\prime =G \setminus x$ and $G^{\prime\prime}=G\setminus N[x]$. 
		We have the upper bound 
		$$
		\reg{I(G)} \le \max\{\reg{I(G^\prime)}, \reg{I(G^{\prime\prime})}+1 \}.
		$$
		We shall prove that under the conditions $(i)$, $(ii)$ and $(iii)$ there exists a vertex $x \in C_n$ such that $\reg{I(G^\prime)} \le \nu(G)+ 1$ and $\reg{I(G^{\prime\prime})}+1 \le \nu(G)+1$.
		We divide the proof into three steps.
		
		\underline{Step 1.} In this step we prove that for any $x \in C_n$ we have $\reg{I(G^\prime)} \le \nu(G) + 1$.
		First we note the following two statements: 
		\begin{compactitem}
			\item From \autoref{Upper_bound_reg_decyc} we have that $\reg{I(G^\prime)} \le \nu(G^\prime)+2$.
			Hence, $\nu(G^\prime) < \nu(G)$ implies that $\reg{I(G^\prime)} \le \nu(G^\prime)+2 \le \nu(G) + 1$.
			\item From \cite[Theorem 1.1]{REG_UNICYCLIC_GRAPH} we obtain that $\reg{I(G^\prime)} = \nu(G^\prime) + 2$ if and only if $\nu(G^\prime) = \nu(G^\prime \setminus \Gamma_{G^\prime}(C_m))$.
		\end{compactitem}
		Thus, it follows that
		$$
		\reg{I(G^\prime)} = \nu(G)+2 \;\Longleftrightarrow\; \Big(\nu(G) = \nu(G^\prime) \text{ and } \nu(G^\prime) =  \nu(G^\prime \setminus \Gamma_{G^\prime}(C_m))\Big).
		$$
		
		In \autoref{obs_Logic}, let $P_1=P\big[		\reg{I(G^\prime)} = \nu(G)+2\big]$, $P_2=P\big[\nu(G) = \nu(G^\prime)\big]$ and $P_3=\big[\nu(G^\prime) =  \nu(G^\prime \setminus \Gamma_{G^\prime}(C_m))\big]$.
		From the logical argument of \autoref{obs_Logic}, if we prove that $\nu(G^\prime)=\nu(G)$ implies $\nu(G^\prime) > \nu(G^\prime \setminus \Gamma_{G^\prime}(C_m))$ then we will get the desired inequality $\reg{I(G^\prime)} \le \nu(G) +1$.
		Assume that $\nu(G)=\nu(G^\prime)$. From the hypothesis $\nu(G) > \nu(G \setminus \Gamma_{G}(C_m))$ and the fact that $G^\prime \setminus \Gamma_{G^\prime}(C_m)$ is an induced subgraph of $G \setminus \Gamma_{G}(C_m)$, then we get 
		$$
		\nu(G^\prime)=\nu(G) > \nu(G \setminus \Gamma_{G}(C_m)) \ge \nu(G^\prime \setminus \Gamma_{G^\prime}(C_m)).
		$$
		Therefore, we have $\reg{I(G^\prime)} \le \nu(G) +1$.
			
		\underline{Step 2.} Since $\nu(G) > \nu(G\setminus\Gamma_{G}(C_n))$, it follows from \autoref{rem_Case_II}$(iv)$ that there exists some $1 \le i \le c$ such that $\nu(F_i) > \nu(H_i)$.
		Following \autoref{notation_trees_in_bicyclic}, we have that $F_1$ is a unicyclic graph containing the cycle $C_m$ and that $F_i$ is a tree for all $i > 1$.
		In this step, fix $i > 1$ where $F_i$ is a tree and $\nu(F_i)>\nu(H_i)$.
		
		Let $x$ be the vertex in $F_i \cap C_n$ and $H$ be the induced subgraph $H = G \setminus (F_i \cup N_G[x])$.
		Note that $G^{\prime\prime}=H \cup H_i$, $d(H,H_i)\ge 2$ and $d(H,F_i)\ge 2$. 
		Then
		$$
		\nu(G^{\prime\prime}) = \nu(H) + \nu(H_i) < \nu(H) + \nu(F_i) \le \nu(G) 
		$$
		follows from the condition $\nu(H_i) < \nu(F_i)$.
		So we have that $\nu(G^{\prime\prime})< \nu(G)$.
		
		Let $K$ be the induced subgraph defined by $K=(G \setminus \Gamma_{G}(C_m)) \setminus(F_i \cup N[x])$.
		Since $i>1$ then $F_i \cap F_1=\emptyset$, and so we get the following statements:
		\begin{compactitem}
			\item $G^{\prime\prime} \setminus \Gamma_{G^{\prime\prime}}(C_m)=K \cup H_i$.
			\item $K\cup F_i$ is an induced subgraph of $G \setminus \Gamma_{G}(C_m)$.
			\item We have the following inequalities 
			$$
			\nu(G^{\prime\prime} \setminus \Gamma_{G^{\prime\prime}}(C_m))=\nu(K)+\nu(H_i)<\nu(K)+\nu(F_i)\le \nu(G \setminus \Gamma_{G}(C_m)).
			$$ 
		\end{compactitem}
		
		Again, as in \underline{Step 1}, \cite[Theorem 1.1]{REG_UNICYCLIC_GRAPH} and \autoref{Upper_bound_reg_decyc} yield the following equivalence
		$$
		\reg{I(G^{\prime\prime})}+1 = \nu(G)+2 \;\Longleftrightarrow\; \Big(\nu(G) =  \nu(G^{\prime\prime})+1 \text{ and } \nu(G^{\prime\prime}) =  \nu(G^{\prime\prime} \setminus \Gamma_{G^{\prime\prime}}(C_m))\Big).
		$$
		In \autoref{obs_Logic}, let $P_1=P\big[		\reg{I(G^{\prime\prime})}+1 = \nu(G)+2\big]$, $P_2=P\big[\nu(G) = \nu(G^{\prime\prime}+1)\big]$ and $P_3=\big[\nu(G^{\prime\prime}) =  \nu(G^{\prime\prime} \setminus \Gamma_{G^\prime}(C_m))\big]$.
		So it is enough to prove that $\nu(G)=\nu(G^{\prime\prime})+1$ implies $\nu(G^{\prime\prime}) >  \nu(G^{\prime\prime} \setminus \Gamma_{G^{\prime\prime}}(C_m))$.
		Assuming $\nu(G)=\nu(G^{\prime\prime})+1$ then we get 
		$$
		\nu(G^{\prime\prime})=\nu(G)-1>\nu(G \setminus \Gamma_G(C_m))-1\ge \nu(G^{\prime\prime} \setminus \Gamma_{G^{\prime\prime}}(C_m)).
		$$
		Therefore, in this case we have $\reg{I(G^{\prime\prime})}+1 \le \nu(G)+1$.
		
		\underline{Step 3.}
		In this last step we assume that $\nu(F_1) > \nu(H_1)$ and that $\nu(F_i)=\nu(H_i)$ for all $i>1$.
		Let $x$ be the vertex in $F_1 \cap C_n$, then as in \underline{Step 2} we have the statements:
		\begin{compactitem}
			\item $\nu(G^{\prime\prime})<\nu(G)$.
			\item $\reg{I(G^{\prime\prime})}+1 = \nu(G)+2 \;\Longleftrightarrow\; \Big(\nu(G) = \nu(G^{\prime\prime})+1 \text{ and } \nu(G^{\prime\prime}) =  \nu(G^{\prime\prime} \setminus \Gamma_{G^{\prime\prime}}(C_m))\Big).$
		\end{compactitem}
		Once more, if we prove that $\nu(G)=\nu(G^{\prime\prime})+1$ implies $\nu(G^{\prime\prime}) >  \nu(G^{\prime\prime} \setminus \Gamma_{G^{\prime\prime}}(C_m))$ then we obtain that $\reg{I(G^{\prime\prime})}+1 \le \nu(G)+1$.
		
		We denote by $L$ the induced subgraph of $G^{\prime\prime} \setminus \Gamma_{G^{\prime\prime}}(C_m)$ given by disconnecting all the trees $F_i$ with $i > 1$, that is  
		$$
		L = (G^{\prime\prime} \setminus \Gamma_{G^{\prime\prime}}(C_m)) \setminus \Gamma_{G}(C_n).
		$$
		From the conditions $\nu(F_i)=\nu(H_i)$ for all $i>1$, then we get $\nu(L)=\nu(G^{\prime\prime} \setminus \Gamma_{G^{\prime\prime}}(C_m))$ (see the proofs of \autoref{components_case_only_one_bad} or \autoref{implication_trees_nu_gamma}).
		We also have that $L$ is an induced subgraph of $G \setminus \Gamma_{G}(C_n\cup C_m)$ because we have the equality 
		$$
		L = (G \setminus \Gamma_{G}(C_n\cup C_m)) \setminus N[x].
		$$
		Finally, from the hypothesis $\nu(G)-\nu(G\setminus \Gamma_G(C_n\cup C_m)) > 1$ we can obtain
		$$
		\nu(G^{\prime\prime})=\nu(G)-1>\nu(G\setminus \Gamma_G(C_n\cup C_m))\ge \nu(L) = \nu(G^{\prime\prime} \setminus \Gamma_{G^{\prime\prime}}(C_m)).
		$$
		Therefore, in this case we also have $\reg{I(G^{\prime\prime})}+1 \le \nu(G)+1$.
	\end{proof}
\end{theorem}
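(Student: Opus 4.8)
The plan is to prove the two implications separately, in both directions leaning on the characterization of the regularity of unicyclic graphs from \cite{REG_UNICYCLIC_GRAPH} (in the form ``$\reg{I(U)}=\nu(U)+2$ iff $\nu(U)=\nu(U\setminus\Gamma(C))$ for the bad cycle $C$''), on the deletion sequence \autoref{short exact sequence}$(ii)$, and on the bookkeeping for $S_{G,0}(C_n)$ set up in \autoref{notation_trees_in_bicyclic} and \autoref{rem_Case_II}.

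First I would dispatch necessity by contrapositive. If condition $(i)$ fails, then $\nu(G)-\nu(G\setminus\Gamma_G(C_n\cup C_m))\le 1$, and since that difference is nonnegative, it is either $0$ — whence $\reg{I(G)}=\nu(G)+3$ by \autoref{thm_case_nu+3} — or exactly $1$ — whence $\reg{I(G)}=\nu(G)+2$ by \autoref{lem_reg_nu+2_both_cycles}; in either case $\reg{I(G)}\neq\nu(G)+1$. If condition $(ii)$ fails, i.e. $\nu(G)=\nu(G\setminus\Gamma_G(C_n))$, then \autoref{lem_reg_nu+2_one_cycle} gives $\reg{I(G)}\ge\nu(G)+2$; condition $(iii)$ is handled symmetrically. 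Hence all three conditions are necessary.

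For sufficiency, assume $(i)$, $(ii)$, $(iii)$; the goal is to exhibit a vertex $x\in C_n$ for which
$$
\reg{I(G\setminus x)}\le\nu(G)+1 \qquad\text{and}\qquad \reg{I(G\setminus N[x])}+1\le\nu(G)+1 ,
$$
so that \autoref{short exact sequence}$(ii)$ closes the argument. I would first show the left inequality holds for \emph{every} $x\in C_n$: the graph $G\setminus x$ is unicyclic with bad cycle $C_m$, so by \cite{REG_UNICYCLIC_GRAPH} the regularity exceeds $\nu(G\setminus x)+1$ only if $\nu(G\setminus x)=\nu((G\setminus x)\setminus\Gamma(C_m))$; feeding this, together with the hypothesis $\nu(G)>\nu(G\setminus\Gamma_G(C_m))$ and the fact that $(G\setminus x)\setminus\Gamma(C_m)$ is an induced subgraph of $G\setminus\Gamma_G(C_m)$, into the logical template \autoref{obs_Logic} shows this cannot coexist with $\nu(G\setminus x)=\nu(G)$. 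To arrange the right inequality I must choose $x$ carefully: by the contrapositive of \autoref{rem_Case_II}$(iv)$, hypothesis $(ii)$ yields a component $F_i$ of $S_{G,0}(C_n)$ with $\nu(F_i)>\nu(H_i)$, and I take $x$ to be the unique vertex of $F_i\cap C_n$. Then $G\setminus N[x]$ decomposes as a disjoint union $H\cup H_i$ with $H$ disjoint from $F_i$, which gives $\nu(G\setminus N[x])<\nu(G)$; again by \cite{REG_UNICYCLIC_GRAPH} and \autoref{obs_Logic} it suffices to show that $\nu(G)=\nu(G\setminus N[x])+1$ forces $\nu(G\setminus N[x])>\nu((G\setminus N[x])\setminus\Gamma(C_m))$.

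This splits into the case where $F_i$ is a tree ($i>1$), handled by comparing with $G\setminus\Gamma_G(C_m)$ using hypothesis $(iii)$, and the case $F_i=F_1$ (the unicyclic component carrying $C_m$), which is the delicate one: here I would pass to the induced subgraph obtained by additionally disconnecting all the trees $F_j$, $j>1$, recognise it as an induced subgraph of $(G\setminus\Gamma_G(C_n\cup C_m))\setminus N[x]$, and invoke the strict inequality $(i)$. The main obstacle, and the place where the ``$>1$'' in $(i)$ is genuinely used, is precisely this last bookkeeping step — aligning the induced matching numbers of the several nested auxiliary graphs $H$, $H_i$, $K$, $L$ and showing that the slack in $(i)$ exactly absorbs the possible $+1$ coming from the unicyclic correction term $\nu(G\setminus N[x])\setminus\Gamma(C_m)$.
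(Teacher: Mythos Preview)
Your proposal is correct and follows essentially the same approach as the paper: necessity via \autoref{lem_reg_nu+2_both_cycles} and \autoref{lem_reg_nu+2_one_cycle}, sufficiency by choosing $x\in C_n$ so that the deletion inequality \autoref{short exact sequence}$(ii)$ closes, with the same three-way analysis (Step~1 for $G\setminus x$ via hypothesis $(iii)$, Step~2 for $G\setminus N[x]$ when the drop $\nu(F_i)>\nu(H_i)$ occurs at a tree component, Step~3 when it occurs only at the unicyclic component $F_1$ and hypothesis $(i)$ absorbs the extra $+1$). The only point worth making explicit is that in your $F_1$ case you may assume $\nu(F_j)=\nu(H_j)$ for all $j>1$ (else Step~2 applies), which is exactly what is needed to identify $\nu(L)$ with $\nu\big((G\setminus N[x])\setminus\Gamma(C_m)\big)$; the paper states this assumption at the outset of its Step~3.
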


\subsection{Case IV}\hspace{\fill}\\
In this short subsection we deal with the remaining case, we assume that $G$ is a bicyclic graph with dumbbell $\cnplcm$ such that $n,m \equiv 2 \; (\text{mod}\;3)$ and $l \le 2$.

When $l\le 2$, the two cycles are too close to each other, and it is difficult to make a direct analysis (with our methods). 
Fortunately, using the complete characterization of the case $l\ge 3$, the problem can be solved with the Lozin transformation.
Suppose that $x$ is a vertex on the bridge $P_l$ (at most two), then we apply the Lozin transformation of $G$ with respect to $x$, and obtain a bicyclic graph $\LozGraph$ with dumbbell of the type $C_n\cdot P_k\cdot C_m$ where $k \ge 4$.
From \cite[Lemma 1]{LOZIN_TRANSFORMATION} and \cite[Theorem 1.1]{LOZIN_TRANS} we get the equality 
\begin{equation}
\label{enlage_with_Lozin}
\reg{\left(I(\LozGraph)\right)} - \nu\left(\LozGraph\right) = \reg{\left(I(G)\right)} - \nu\left(G\right).
\end{equation}

Therefore we obtain a characterization in the following corollary.

\begin{cor}
	\label{cor_case_l<=2}
	Let $G$ be a bicyclic graph with dumbbell $\cnplcm$ such that $n,m \equiv 2 \; (\Mod 3)$ and $l \le 2$. 
	Let $x$ be a point on the bridge $P_l$ and let $\LozGraph$ be the Lozin transformation of $G$ with respect to $x$.
	Then we have that $\nu(G)+1\le\reg{I(G)}\le\nu(G)+2$, and that $\reg{I(G)}=\nu(G)+1$ if and only if the following conditions are satisfied: 
	\begin{enumerate}[(i)]
		\item $\nu(\LozGraph)-\nu(\LozGraph\setminus \Gamma_{\LozGraph}(C_n\cup C_m)) > 1$;
		\item $\nu(\LozGraph) > \nu(\LozGraph \setminus \Gamma_{\LozGraph}(C_n))$;
		\item $\nu(\LozGraph) > \nu(\LozGraph \setminus \Gamma_{\LozGraph}(C_m))$.
	\end{enumerate}
	\begin{proof}
		It follows from \autoref{general_prop_bicyclic}, \autoref{enlage_with_Lozin}, and \autoref{charact_thm_nu+1_in_bad_case}. 
	\end{proof}
\end{cor}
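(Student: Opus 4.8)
The plan is to settle the two bounds directly and then obtain the characterization by transporting \autoref{charact_thm_nu+1_in_bad_case} across the Lozin transformation, exactly as flagged before the statement. First I would record that the lower bound $\reg{I(G)}\ge\nu(G)+1$ is \autoref{lower_bound_Katzman}, while the upper bound $\reg{I(G)}\le\nu(G)+2$ is precisely \autoref{general_prop_bicyclic}$(iii)$, which applies since $l\le 2$. So the chain $\nu(G)+1\le\reg{I(G)}\le\nu(G)+2$ is free, and everything reduces to deciding when the lower value is attained.

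For the characterization, fix a vertex $x$ on the bridge $P_l$ and form $\LozGraph$. The key points are that $\LozGraph$ is again a bicyclic graph; that its dumbbell is $C_n\cdot P_k\cdot C_m$ with $k=l+3\ge 4$, since the Lozin transformation at a bridge vertex lengthens the bridge by three and leaves the two cycles $C_n$ and $C_m$ — hence the residues $n,m\equiv 2\;(\Mod 3)$ — untouched; and that by \cite[Lemma 1]{LOZIN_TRANSFORMATION} and \cite[Theorem 1.1]{LOZIN_TRANS} one has $\nu(\LozGraph)=\nu(G)+1$ and $\reg{I(\LozGraph)}=\reg{I(G)}+1$, whose difference is \eqref{enlage_with_Lozin}. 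In particular $\reg{I(G)}=\nu(G)+1$ if and only if $\reg{I(\LozGraph)}=\nu(\LozGraph)+1$. Now $\LozGraph$ is a bicyclic graph with dumbbell $C_n\cdot P_k\cdot C_m$, $n,m\equiv 2\;(\Mod 3)$, $k\ge 4\ge 3$, so \autoref{charact_thm_nu+1_in_bad_case} applies to it verbatim: $\reg{I(\LozGraph)}=\nu(\LozGraph)+1$ holds if and only if conditions $(i)$, $(ii)$, $(iii)$ — stated for $\LozGraph$ — are satisfied. Chaining the two equivalences gives the corollary; as a byproduct the truth value of $(i)\wedge(ii)\wedge(iii)$ is independent of the chosen bridge vertex $x$, since $\reg{I(G)}=\nu(G)+1$ is.

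The hard part here is not in this proof but in what it imports: the genuine combinatorial and homological work already sits in the neighbourhood-deletion inductions of \autoref{general_prop_bicyclic} and \autoref{charact_thm_nu+1_in_bad_case} and in the behaviour of the Lozin transformation on regularity and induced matching number. The one thing I would double-check carefully is that the Lozin transformation really does behave as advertised on a general bicyclic graph and not merely on a bare dumbbell — i.e. that $\LozGraph$ stays bicyclic with dumbbell $C_n\cdot P_{l+3}\cdot C_m$ and that \eqref{enlage_with_Lozin} holds at that level of generality. Granting the cited results, no further calculation is needed: \autoref{cor_case_l<=2} is a direct assembly of \autoref{general_prop_bicyclic}, \eqref{enlage_with_Lozin}, and \autoref{charact_thm_nu+1_in_bad_case}.
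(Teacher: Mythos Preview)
Your proposal is correct and follows exactly the route the paper takes: the paper's proof is the single line ``It follows from \autoref{general_prop_bicyclic}, \autoref{enlage_with_Lozin}, and \autoref{charact_thm_nu+1_in_bad_case},'' and you have simply unpacked that citation chain, additionally making explicit the Katzman lower bound and the fact that $\LozGraph$ is again bicyclic with dumbbell $C_n\cdot P_{l+3}\cdot C_m$. There is nothing to add.
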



\subsection{Examples}\hspace{\fill}\\
In this last subsection we shall give examples for each one of the statements in the characterization of \autoref{full_characterization}.

\begin{exmp}
	Statement $\textit{(I)}$ of \autoref{full_characterization}. 
	Let $G$ be the graph below.
	\begin{center}
		\definecolor{ududff}{rgb}{0.30196078431372547,0.30196078431372547,1.}
		\begin{tikzpicture}[line cap=round,line join=round,>=triangle 45,x=1.0cm,y=1.0cm]
		\clip(3.5,2.5) rectangle (12.,5.5);
		\draw [line width=1.2pt] (4.,4.)-- (5.,5.);
		\draw [line width=1.2pt] (5.,5.)-- (6.,4.);
		\draw [line width=1.2pt] (6.,4.)-- (5.,3.);
		\draw [line width=1.2pt] (5.,3.)-- (4.,4.);
		\draw [line width=1.2pt] (6.,4.)-- (7.,4.);
		\draw [line width=1.2pt] (7.,4.)-- (8.,4.);
		\draw [line width=1.2pt] (8.,4.)-- (9.,5.);
		\draw [line width=1.2pt] (9.,5.)-- (9.,3.);
		\draw [line width=1.2pt] (9.,3.)-- (8.,4.);
		\draw [line width=1.2pt] (9.,5.)-- (10.,5.);
		\draw [line width=1.2pt] (10.,5.)-- (11.,5.);
		\begin{scriptsize}
		\draw [fill=ududff] (4.,4.) circle (2.0pt);
		\draw[color=ududff] (3.8736235633288434,4.421059619450316) node {$x_3$};
		\draw [fill=ududff] (5.,5.) circle (2.0pt);
		\draw[color=ududff] (5.149810378627708,5.297477070920621) node {$x_2$};
		\draw [fill=ududff] (6.,4.) circle (2.0pt);
		\draw[color=ududff] (6.1492337881991075,4.344180895637131) node {$x_1$};
		\draw [fill=ududff] (5.,3.) circle (2.0pt);
		\draw[color=ududff] (5.488192081491441,3.1294970593888136) node {$x_4$};
		\draw [fill=ududff] (7.,4.) circle (2.0pt);
		\draw[color=ududff] (7.13328145300787,4.344180895637131) node {$z_1$};
		\draw [fill=ududff] (8.,4.) circle (2.0pt);
		\draw[color=ududff] (7.948195925427626,4.4056838746876785) node {$y_1$};
		\draw [fill=ududff] (9.,5.) circle (2.0pt);
		\draw[color=ududff] (9.147504016913304,5.297477070920621) node {$y_2$};
		\draw [fill=ududff] (9.,3.) circle (2.0pt);
		\draw[color=ududff] (9.297504016913304,3.2986302517778197) node {$y_3$};
		\draw [fill=ududff] (10.,5.) circle (2.0pt);
		\draw[color=ududff] (10.146927426484703,5.297477070920621) node {$z_2$};
		\draw [fill=ududff] (11.,5.) circle (2.0pt);
		\draw[color=ududff] (11.146350836056103,5.297477070920621) node {$z_3$};
		\end{scriptsize}
		\end{tikzpicture}
	\end{center}
	Then we have $\reg{I(G)}=4$ and $\nu(G)=3$.
\end{exmp}

\begin{exmp}
	Statement $\textit{(II)}$ of \autoref{full_characterization}.
	Let $G$ be the graph below.
	\begin{center}
		\definecolor{ududff}{rgb}{0.30196078431372547,0.30196078431372547,1.}
		\begin{tikzpicture}[line cap=round,line join=round,>=triangle 45,x=1.0cm,y=1.0cm]
		\clip(1.5,1.5) rectangle (7.5,5.5);
		\draw [line width=1.2pt] (2.,5.)-- (2.,3.);
		\draw [line width=1.2pt] (2.,3.)-- (3.,4.);
		\draw [line width=1.2pt] (3.,4.)-- (2.,5.);
		\draw [line width=1.2pt] (3.,4.)-- (4.,4.);
		\draw [line width=1.2pt] (4.,4.)-- (5.,5.);
		\draw [line width=1.2pt] (5.,5.)-- (6.,5.);
		\draw [line width=1.2pt] (6.,5.)-- (7.,4.);
		\draw [line width=1.2pt] (7.,4.)-- (7.,3.);
		\draw [line width=1.2pt] (7.,3.)-- (6.,2.);
		\draw [line width=1.2pt] (6.,2.)-- (5.,2.);
		\draw [line width=1.2pt] (5.,2.)-- (4.,3.);
		\draw [line width=1.2pt] (4.,3.)-- (4.,4.);
		\begin{scriptsize}
		\draw [fill=ududff] (2.,5.) circle (2.0pt);
		\draw[color=ududff] (2.1170466559612455,5.233846923822523) node {$x_2$};
		\draw [fill=ududff] (2.,3.) circle (2.0pt);
		\draw[color=ududff] (2.377144873844442,3.094350367180728) node {$x_3$};
		\draw [fill=ududff] (3.,4.) circle (2.0pt);
		\draw[color=ududff] (3.102657429245669,4.284295081268019) node {$x_1$};
		\draw [fill=ududff] (4.,4.) circle (2.0pt);
		\draw[color=ududff] (3.883934261727225,4.308334368421297) node {$y_1$};
		\draw [fill=ududff] (5.,5.) circle (2.0pt);
		\draw[color=ududff] (5.097918262967795,5.257886210975802) node {$y_2$};
		\draw [fill=ududff] (6.,5.) circle (2.0pt);
		\draw[color=ududff] (6.119587966982137,5.233846923822523) node {$y_3$};
		\draw [fill=ududff] (7.,4.) circle (2.0pt);
		\draw[color=ududff] (7.117218383843199,4.224196863384822) node {$y_4$};
		\draw [fill=ududff] (7.,3.) circle (2.0pt);
		\draw[color=ududff] (7.317218383843199,3.2265664465237607) node {$y_5$};
		\draw [fill=ududff] (6.,2.) circle (2.0pt);
		\draw[color=ududff] (6.403725472018612,2.108739593896306) node {$y_6$};
		\draw [fill=ududff] (5.,2.) circle (2.0pt);
		\draw[color=ududff] (5.097918262967795,2.277014603969256) node {$y_7$};
		\draw [fill=ududff] (4.,3.) circle (2.0pt);
		\draw[color=ududff] (4.312307489683372,3.2265664465237607) node {$y_8$};
		\end{scriptsize}
		\end{tikzpicture}
	\end{center}
	Then we have $\reg{I(G)}=5$ and $\nu(G)=3$.
	
	On the other hand, let $G$ be the graph below.
	\begin{center}
		\definecolor{ududff}{rgb}{0.30196078431372547,0.30196078431372547,1.}
		\begin{tikzpicture}[line cap=round,line join=round,>=triangle 45,x=1.0cm,y=1.0cm]
		\clip(1.5,1.5) rectangle (8.5,5.5);
		\draw [line width=1.2pt] (2.,5.)-- (2.,3.);
		\draw [line width=1.2pt] (2.,3.)-- (3.,4.);
		\draw [line width=1.2pt] (3.,4.)-- (2.,5.);
		\draw [line width=1.2pt] (3.,4.)-- (4.,4.);
		\draw [line width=1.2pt] (4.,4.)-- (5.,5.);
		\draw [line width=1.2pt] (5.,5.)-- (6.,5.);
		\draw [line width=1.2pt] (6.,5.)-- (7.,4.);
		\draw [line width=1.2pt] (7.,4.)-- (7.,3.);
		\draw [line width=1.2pt] (7.,3.)-- (6.,2.);
		\draw [line width=1.2pt] (6.,2.)-- (5.,2.);
		\draw [line width=1.2pt] (5.,2.)-- (4.,3.);
		\draw [line width=1.2pt] (4.,3.)-- (4.,4.);
		\draw [line width=1.2pt] (7.,3.)-- (8.,3.);
		\begin{scriptsize}
		\draw [fill=ududff] (2.,5.) circle (2.0pt);
		\draw[color=ududff] (2.1543604565701857,5.29946633117295) node {$x_2$};
		\draw [fill=ududff] (2.,3.) circle (2.0pt);
		\draw[color=ududff] (2.4342150687564937,3.127420879705376) node {$x_3$};
		\draw [fill=ududff] (3.,4.) circle (2.0pt);
		\draw[color=ududff] (3.1285867252431423,4.3891237522490405) node {$x_1$};
		\draw [fill=ududff] (4.,4.) circle (2.0pt);
		\draw[color=ududff] (3.847278234919914,4.4210655971235635) node {$y_1$};
		\draw [fill=ududff] (5.,5.) circle (2.0pt);
		\draw[color=ududff] (5.140922952338102,5.331408176047473) node {$y_2$};
		\draw [fill=ududff] (6.,5.) circle (2.0pt);
		\draw[color=ududff] (6.147091065885583,5.29946633117295) node {$y_3$};
		\draw [fill=ududff] (7.,4.) circle (2.0pt);
		\draw[color=ududff] (7.153259179433062,4.3092691400627325) node {$y_4$};
		\draw [fill=ududff] (7.,3.) circle (2.0pt);
		\draw[color=ududff] (7.253259179433062,3.398926561138823) node {$y_5$};
		\draw [fill=ududff] (6.,2.) circle (2.0pt);
		\draw[color=ududff] (6.458887522946414,2.137223688595158) node {$y_6$};
		\draw [fill=ududff] (5.,2.) circle (2.0pt);
		\draw[color=ududff] (5.140922952338102,2.3608166027168203) node {$y_7$};
		\draw [fill=ududff] (4.,3.) circle (2.0pt);
		\draw[color=ududff] (4.250725761227884,3.3031010265152534) node {$y_8$};
		\draw [fill=ududff] (8.,3.) circle (2.0pt);
		\draw[color=ududff] (8.159427292980542,3.3031010265152534) node {$z_1$};
		\end{scriptsize}
		\end{tikzpicture}
	\end{center}
	Then we have $\reg{I(G)}=5$ and $\nu(G)=4$.
\end{exmp}

\begin{exmp}
	Statement $\textit{(III)}$ of \autoref{full_characterization}.
	In \autoref{examp_reg_nu+3} we saw a graph $G$ where $\reg{I(G)}=6$ and $\nu(G)=3$.
	
	Let $G$ be the graph below.
	\begin{center}
		\definecolor{ududff}{rgb}{0.30196078431372547,0.30196078431372547,1.}
		\begin{tikzpicture}[line cap=round,line join=round,>=triangle 45,x=1.0cm,y=1.0cm]
		\clip(1.5,1.5) rectangle (10.7,5.5);
		\draw [line width=1.2pt] (2.,4.)-- (2.,2.);
		\draw [line width=1.2pt] (2.,2.)-- (4.,2.);
		\draw [line width=1.2pt] (4.,2.)-- (5.,3.);
		\draw [line width=1.2pt] (5.,3.)-- (4.,4.);
		\draw [line width=1.2pt] (4.,4.)-- (2.,4.);
		\draw [line width=1.2pt] (5.,3.)-- (6.,3.);
		\draw [line width=1.2pt] (6.,3.)-- (7.,3.);
		\draw [line width=1.2pt] (7.,3.)-- (8.,4.);
		\draw [line width=1.2pt] (8.,4.)-- (10.,4.);
		\draw [line width=1.2pt] (10.,4.)-- (10.,2.);
		\draw [line width=1.2pt] (10.,2.)-- (8.,2.);
		\draw [line width=1.2pt] (8.,2.)-- (7.,3.);
		\draw [line width=1.2pt] (10.,4.)-- (10.,5.);
		\begin{scriptsize}
		\draw [fill=ududff] (2.,4.) circle (2.0pt);
		\draw[color=ududff] (2.18146341463415,4.371060692002271) node {$x_3$};
		\draw [fill=ududff] (2.,2.) circle (2.0pt);
		\draw[color=ududff] (2.18146341463415,2.3744639818491238) node {$x_4$};
		\draw [fill=ududff] (4.,2.) circle (2.0pt);
		\draw[color=ududff] (3.818672716959731,2.4543278502552495) node {$x_5$};
		\draw [fill=ududff] (5.,3.) circle (2.0pt);
		\draw[color=ududff] (4.4775496313102705,3.2529665343165086) node {$x_1$};
		\draw [fill=ududff] (4.,4.) circle (2.0pt);
		\draw[color=ududff] (4.1980260918888295,4.371060692002271) node {$x_2$};
		\draw [fill=ududff] (6.,3.) circle (2.0pt);
		\draw[color=ududff] (5.935065229722069,2.73385138967669) node {$z_1$};
		\draw [fill=ududff] (7.,3.) circle (2.0pt);
		\draw[color=ududff] (7.452648893930804,3.1529665343165086) node {$y_1$};
		\draw [fill=ududff] (8.,4.) circle (2.0pt);
		\draw[color=ududff] (8.39087918321044,4.450924560408398) node {$y_2$};
		\draw [fill=ududff] (10.,4.) circle (2.0pt);
		\draw[color=ududff] (10.387475893363588,4.35109472490074) node {$y_3$};
		\draw [fill=ududff] (10.,2.) circle (2.0pt);
		\draw[color=ududff] (10.187816222348273,2.3744639818491238) node {$y_4$};
		\draw [fill=ududff] (8.,2.) circle (2.0pt);
		\draw[color=ududff] (8.191219512195126,2.3744639818491238) node {$y_5$};
		\draw [fill=ududff] (10.,5.) circle (2.0pt);
		\draw[color=ududff] (10.187816222348273,5.389325014180377) node {$z_2$};
		\end{scriptsize}
		\end{tikzpicture}
	\end{center}
	Then we have $\reg{I(G)}=5$ and $\nu(G)=3$.  
	
	But if we move the outer edge to the left, then we get a different result. 
	Let $G$ be the graph below. 
	\begin{center}
		\definecolor{ududff}{rgb}{0.30196078431372547,0.30196078431372547,1.}
		\begin{tikzpicture}[line cap=round,line join=round,>=triangle 45,x=1.0cm,y=1.0cm]
		\clip(1.5,1.5) rectangle (10.6,5.5);
		\draw [line width=1.2pt] (2.,4.)-- (2.,2.);
		\draw [line width=1.2pt] (2.,2.)-- (4.,2.);
		\draw [line width=1.2pt] (4.,2.)-- (5.,3.);
		\draw [line width=1.2pt] (5.,3.)-- (4.,4.);
		\draw [line width=1.2pt] (4.,4.)-- (2.,4.);
		\draw [line width=1.2pt] (5.,3.)-- (6.,3.);
		\draw [line width=1.2pt] (6.,3.)-- (7.,3.);
		\draw [line width=1.2pt] (7.,3.)-- (8.,4.);
		\draw [line width=1.2pt] (8.,4.)-- (10.,4.);
		\draw [line width=1.2pt] (10.,4.)-- (10.,2.);
		\draw [line width=1.2pt] (10.,2.)-- (8.,2.);
		\draw [line width=1.2pt] (8.,2.)-- (7.,3.);
		\draw [line width=1.2pt] (8.,4.)-- (8.,5.);
		\begin{scriptsize}
		\draw [fill=ududff] (2.,4.) circle (2.0pt);
		\draw[color=ududff] (2.1850935904707884,4.3801361315938765) node {$x_3$};
		\draw [fill=ududff] (2.,2.) circle (2.0pt);
		\draw[color=ududff] (2.1850935904707884,2.3835394214407284) node {$x_4$};
		\draw [fill=ududff] (4.,2.) circle (2.0pt);
		\draw[color=ududff] (3.802336925694838,2.234032898468545) node {$x_5$};
		\draw [fill=ududff] (5.,3.) circle (2.0pt);
		\draw[color=ududff] (4.461213840045377,3.2620419739081137) node {$x_1$};
		\draw [fill=ududff] (4.,4.) circle (2.0pt);
		\draw[color=ududff] (4.181690300623936,4.3801361315938765) node {$x_2$};
		\draw [fill=ududff] (6.,3.) circle (2.0pt);
		\draw[color=ududff] (5.938695405558707,2.742926829268295) node {$z_1$};
		\draw [fill=ududff] (7.,3.) circle (2.0pt);
		\draw[color=ududff] (7.556279069767441,3.2620419739081137) node {$y_1$};
		\draw [fill=ududff] (8.,4.) circle (2.0pt);
		\draw[color=ududff] (8.394509359047078,4.46) node {$y_2$};
		\draw [fill=ududff] (10.,4.) circle (2.0pt);
		\draw[color=ududff] (10.19144639818491,4.3801361315938765) node {$y_3$};
		\draw [fill=ududff] (10.,2.) circle (2.0pt);
		\draw[color=ududff] (10.19144639818491,2.3835394214407284) node {$y_4$};
		\draw [fill=ududff] (8.,2.) circle (2.0pt);
		\draw[color=ududff] (8.194849688031763,2.3835394214407284) node {$y_5$};
		\draw [fill=ududff] (8.,5.) circle (2.0pt);
		\draw[color=ududff] (8.194849688031763,5.37843448667045) node {$z_2$};
		\end{scriptsize}
		\end{tikzpicture}
	\end{center}
	Then we have $\reg{I(G)}=5$ and $\nu(G)=4$.	
\end{exmp}

\begin{exmp}
	Statement $\textit{(IV)}$ of \autoref{full_characterization}.
	Let $G$ be the graph below.
	\begin{center}
		\definecolor{ududff}{rgb}{0.30196078431372547,0.30196078431372547,1.}
		\begin{tikzpicture}[line cap=round,line join=round,>=triangle 45,x=1.0cm,y=1.0cm]
		\clip(1.5,1.5) rectangle (8.8,4.8);
		\draw [line width=1.2pt] (2.,4.)-- (2.,2.);
		\draw [line width=1.2pt] (2.,2.)-- (4.,2.);
		\draw [line width=1.2pt] (4.,2.)-- (5.,3.);
		\draw [line width=1.2pt] (5.,3.)-- (4.,4.);
		\draw [line width=1.2pt] (4.,4.)-- (2.,4.);
		\draw [line width=1.2pt] (5.,3.)-- (6.,4.);
		\draw [line width=1.2pt] (6.,4.)-- (8.,4.);
		\draw [line width=1.2pt] (8.,4.)-- (8.,2.);
		\draw [line width=1.2pt] (8.,2.)-- (6.,2.);
		\draw [line width=1.2pt] (6.,2.)-- (5.,3.);
		\begin{scriptsize}
		\draw [fill=ududff] (2.,4.) circle (2.0pt);
		\draw[color=ududff] (2.1814634146341505,4.371060692002271) node {$x_3$};
		\draw [fill=ududff] (2.,2.) circle (2.0pt);
		\draw[color=ududff] (2.1814634146341505,2.3744639818491238) node {$x_4$};
		\draw [fill=ududff] (4.,2.) circle (2.0pt);
		\draw[color=ududff] (3.818672716959732,2.4543278502552495) node {$x_5$};
		\draw [fill=ududff] (5.,3.) circle (2.0pt);
		\draw[color=ududff] (4.477549631310271,3.2529665343165086) node {$x_1$};
		\draw [fill=ududff] (4.,4.) circle (2.0pt);
		\draw[color=ududff] (4.19802609188883,4.371060692002271) node {$x_2$};
		\draw [fill=ududff] (6.,4.) circle (2.0pt);
		\draw[color=ududff] (6.394282473057293,4.450924560408398) node {$y_2$};
		\draw [fill=ududff] (8.,4.) circle (2.0pt);
		\draw[color=ududff] (8.39087918321044,4.35109472490074) node {$y_3$};
		\draw [fill=ududff] (8.,2.) circle (2.0pt);
		\draw[color=ududff] (8.191219512195126,2.3744639818491238) node {$y_4$};
		\draw [fill=ududff] (6.,2.) circle (2.0pt);
		\draw[color=ududff] (6.094792966534321,2.4742938173567812) node {$y_5$};
		\end{scriptsize}
		\end{tikzpicture}
	\end{center}
	Then we have $\reg{I(G)}=4$ and $\nu(G)=2$.
	
	By adding an edge, let $G$ be the graph below. 
	\begin{center}
		\definecolor{ududff}{rgb}{0.30196078431372547,0.30196078431372547,1.}
		\begin{tikzpicture}[line cap=round,line join=round,>=triangle 45,x=1.0cm,y=1.0cm]
		\clip(1.5,1.5) rectangle (8.7,5.7);
		\draw [line width=1.2pt] (2.,4.)-- (2.,2.);
		\draw [line width=1.2pt] (2.,2.)-- (4.,2.);
		\draw [line width=1.2pt] (4.,2.)-- (5.,3.);
		\draw [line width=1.2pt] (5.,3.)-- (4.,4.);
		\draw [line width=1.2pt] (4.,4.)-- (2.,4.);
		\draw [line width=1.2pt] (5.,3.)-- (6.,4.);
		\draw [line width=1.2pt] (6.,4.)-- (8.,4.);
		\draw [line width=1.2pt] (8.,4.)-- (8.,2.);
		\draw [line width=1.2pt] (8.,2.)-- (6.,2.);
		\draw [line width=1.2pt] (6.,2.)-- (5.,3.);
		\draw [line width=1.2pt] (5.,3.)-- (5.,5.);
		\begin{scriptsize}
		\draw [fill=ududff] (2.,4.) circle (2.0pt);
		\draw[color=ududff] (2.1814634146341505,4.371060692002271) node {$x_3$};
		\draw [fill=ududff] (2.,2.) circle (2.0pt);
		\draw[color=ududff] (2.1814634146341505,2.3744639818491238) node {$x_4$};
		\draw [fill=ududff] (4.,2.) circle (2.0pt);
		\draw[color=ududff] (3.818672716959732,2.4543278502552495) node {$x_5$};
		\draw [fill=ududff] (5.,3.) circle (2.0pt);
		\draw[color=ududff] (4.477549631310271,3.2529665343165086) node {$x_1$};
		\draw [fill=ududff] (4.,4.) circle (2.0pt);
		\draw[color=ududff] (4.19802609188883,4.371060692002271) node {$x_2$};
		\draw [fill=ududff] (6.,4.) circle (2.0pt);
		\draw[color=ududff] (6.394282473057293,4.450924560408398) node {$y_2$};
		\draw [fill=ududff] (8.,4.) circle (2.0pt);
		\draw[color=ududff] (8.39087918321044,4.35109472490074) node {$y_3$};
		\draw [fill=ududff] (8.,2.) circle (2.0pt);
		\draw[color=ududff] (8.191219512195126,2.3744639818491238) node {$y_4$};
		\draw [fill=ududff] (6.,2.) circle (2.0pt);
		\draw[color=ududff] (6.094792966534321,2.4742938173567812) node {$y_5$};
		\draw [fill=ududff] (5.,5.) circle (2.0pt);
		\draw[color=ududff] (5.196324446965404,5.389325014180377) node {$z_1$};
		\end{scriptsize}
		\end{tikzpicture}
	\end{center}
	Then we have $\reg{I(G)}=4$ and $\nu(G)=3$.
\end{exmp}

\section{Castelnuovo-Mumford regularity of powers}\label{Upper_bound_section}

In this section, we study  the regularity of the powers of $ \edge{\cnplcm} $ when $ l\leq2 $. 
Our strategy is to obtain a lower bound and an upper bound for $\reg{\edge{\cnplcm}^q}$, such that both coincide and are equal to  $2q+\reg {\edge{\cnplcm}}$.
To obtain the upper bound, we follow the argument of  Banerjee from \cite[Theorem 5.2]{banerjee}. 
To calculate the lower bound, we proceed by looking at ``nice" induced subgraphs of $ \cnplcm $.

As a side result, we answer an interesting question on the behavior of the constant term of the asymptotically linear regularity function.
Let $I$ be an arbitrary ideal generated in degree $d$ and let $b_q:=\reg (I^q)- dq$ for $ q\geq 1 $. 
An  interesting question is to study of the sequence $ \{b_i\}_{i\geq1} $. 
In \cite{Eisenbud-Harris} Eisenbud and Harris proved that if $\dim (R/I)=0$, then $ \{b_i\}_{i\geq1} $ is a weakly decreasing sequence of non-negative integers. 
In \cite{BBH} Banerjee, Beyarslan and  H\`{a} conjectured that for any edge ideal, $ \{b_i\}_{i\geq1} $ is a weakly decreasing sequence  (see \cite[Conjecture 7.11]{BBH}).  
For the edge ideal of any dumbbell graph with $ l\leq2 $, we prove $b_i=b_1$ for all $i\geq 1$. However, we expect $b_i\leq b_1$ for all $i\geq 1$ for any graph. 

\begin{rem}\label{remark n+m+1}
	From \autoref{formula_nu_dumbbell} and  \autoref{reg_dumbbell}, for any $l \le 2$ we have that 
	$$
	\reg{I(\cnplcm)} \ge \lfloor \frac{n+m+l+1}{3} \rfloor.
	$$
\end{rem}
The previous inequality is not satisfied when $l\ge 3$, because $\reg{I(C_4 \cdot P_3 \cdot C_4)} = 3$ and $\lfloor \frac{4+4+3+1}{3} \rfloor =4$.

As recalled earlier, we use the notation of even-connection from Banerjee \cite[Theorem 5.2]{banerjee}. 
The following lemma is crucial in our treatment of the even-connected vertices, and its proof is similar to \cite[Lemma 6.13]{banerjee}.

\begin{lem}\label{claim cncm}
	Let $G$ be a graph. 
	As in \autoref{newGraphBanerjee}, let $G^\prime$ be the graph associated to ${(I(G)^{q+1} \colon e_1\cdots e_q)}^{\text{pol}}$.
	Suppose $u=p_0,p_1,\ldots,p_{2s+1}=v$ is a path that even-connects $u$ and $v$ with respect to the $q$-fold $e_1\cdots e_q$.
	Then we have
	$$ \bigcup_{i=0}^{2s+1}  N_{G'}[p_i] \,\subset\, N_{G'}[u]\cup N_{G'}[v]. $$
	\begin{proof}
		Let $U$ be the set of vertices $U=\{p_0,p_1,\ldots,p_{2s+1}\}$.
		For each $1 \le k \le s$ we have that $p_{2k-1}p_{2k}=e_{j_k}$ for some $1 \le j_k \le q$, i.e. $u$ and $v$ are even connected with respect to the $s$-fold $e_{j_1}e_{j_2}\cdots e_{j_s}$.
		
		Let $w$ be a vertex even-connected to some vertex $z \in U$ with respect to the $q$-fold $e_1\cdots e_q$.
		Then, there exists a path $z=r_0,r_1,\ldots,r_{2t+1}=w$ that even-connects $z$ and $w$ with respect to the $q$-fold $e_1\cdots e_q$.
		Let $i$ be the largest integer such that $r_i \in U$.
		From the fact that $r_0=z \in U$, we have that the integer $i$ is well defined and $i\ge 0$.
		Let $k$ be an integer such that $p_k=r_i$.
		
		The proof is now divided into four different cases depending on $i \,\text{mod}\, 2$ and $k \,\text{mod}\, 2$. 
		When $i$ and $k$ are both odd integers, we have that $r_{i}r_{i+1}$ is equal to some edge of $\{e_1,e_2,\ldots,e_q\}$ and that $p_{k-1}p_{k}$ is not equal to any edge of $\{ e_{j_1},e_{j_2},\ldots e_{j_s} \}$.
		By the definition of $i$ we have
		$$
		\{r_{i+1},r_{i+2},\ldots,r_{2t+1}\} \cap U = \emptyset.
		$$
		So, in this case, it follows that 
		$$
		u = p_0, \ldots, p_{k-1}, p_k = r_i, r_{i+1}, \ldots, r_{2t+1} = w
		$$
		is a path that even-connects $u$ and $w$ with respect to the $q$-fold $e_1\cdots e_q$.
		
		The other three cases follow in a similar way.
	\end{proof}
\end{lem}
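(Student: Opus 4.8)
The plan is to unwind everything through the explicit description of $G^\prime$ recorded in \autoref{newGraphBanerjee}: a vertex $w$ lies in $N_{G^\prime}[p_k]$, for some $0\le k\le 2s+1$, precisely when $w=p_k$, or $\{w,p_k\}\in E(G)$, or $w$ is even-connected to $p_k$ with respect to $M=e_1\cdots e_q$, or $w$ is the whisker vertex attached to $p_k$ because $p_k$ is even-connected to itself. Thus it is enough to fix such a pair $(w,k)$ and place $w$ inside $N_{G^\prime}[u]\cup N_{G^\prime}[v]$. The case $w=p_k$ I would settle first: when $k$ is odd the truncation $p_0,p_1,\ldots,p_k$ is itself an even-connection walk from $u$ to $p_k$ with respect to $M$ — conditions (ii) and (iii) being inherited, since its required edges $\{p_1,p_2\},\{p_3,p_4\},\dots$ form a sub-multiset of the required edges of the original walk — so $p_k\in N_{G^\prime}[u]$; when $k$ is even the reversed truncation $p_{2s+1},p_{2s},\ldots,p_k$ does the same on the other side.

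For the remaining kinds of $w$ there is a walk $p_k=r_0,r_1,\ldots,r_{2t+1}=w$ even-connecting $p_k$ and $w$ with respect to $M$ (with $t=0$ for a $G$-edge, and the degenerate closed walk for a whisker), and the idea is to graft it onto one of the two arms $p_0,\ldots,p_k$ and $p_{2s+1},\ldots,p_k$ of the original path. Since $k+(2s+1-k)=2s+1$ is odd, exactly one arm has even length; grafting the $r$-walk at its end then produces a walk of odd length from $u$, or from $v$, to $w$. The point is to check that this concatenation is again an even-connection walk for $M$: on the arm part its required (even-position) edges are required edges of the original walk, hence among the $e_i$'s; on the $r$-walk part they are required edges of the walk for $w$, hence among the $e_i$'s; and because the grafted arm has even length the two ``required-position'' patterns fit together seamlessly at the splice vertex. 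From this one reads off $\{u,w\}\in E(G^\prime)$ or $\{v,w\}\in E(G^\prime)$ (or $w\in\{u,v\}$), which is the assertion; the $G$-edge and whisker cases are the easy degenerate instances of this scheme.

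The step I expect to be the crux is verifying condition (iii) for the concatenated walk, namely that no $e_i$ is used at a required position more often than it occurs in $M$. Here I would first pass to the tail of the $r$-walk after its \emph{last} visit to the vertex set $U=\{p_0,\ldots,p_{2s+1}\}$: if $r_i$ is that last vertex and $p_k=r_i$, then $r_{i+1},\ldots,r_{2t+1}$ lies entirely outside $U$, so every edge traversed at a required position in the tail has both endpoints outside $U$, while every edge traversed at a required position in the arm has both endpoints inside $U$; hence these two sub-multisets of required edges are disjoint, and since each is individually a sub-multiset of $M$, so is their union. The argument then splits into the four cases determined by the parities of $i$ and of $k$ — $i\equiv k$ calls for the $u$-arm and $i\not\equiv k$ for the $v$-arm, in order that the concatenation have odd length — and in each case, once the maximality of $i$ has been invoked, what is left is exactly the parity bookkeeping matching required positions across the splice. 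That four-case verification, rather than any deep idea, is where the actual work lies.
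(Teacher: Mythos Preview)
Your approach matches the paper's: both locate the last index $i$ with $r_i\in U=\{p_0,\ldots,p_{2s+1}\}$, set $r_i=p_k$, and splice the tail $r_i,\ldots,r_{2t+1}$ onto the parity-appropriate arm of the $p$-walk, the four cases being exactly those determined by the parities of $i$ and $k$. Your verification of condition~(iii) via disjointness is in fact more explicit than the paper's (which simply asserts that the concatenated walk is an even-connection and leaves the remaining three parity cases to the reader). One imprecision: when $i$ is odd the first required edge of the tail in the concatenated walk is $\{r_i,r_{i+1}\}$, and $r_i=p_k\in U$; so ``both endpoints outside $U$'' should read ``at least one endpoint outside $U$''. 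The disjointness conclusion survives, since every required edge of the arm has \emph{both} endpoints in $U$ while $r_{i+1}\notin U$ by maximality of $i$.

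One remark on the whisker case, which you describe as a ``degenerate closed walk'': a whisker vertex $p_k'$ is a new vertex of $G'$ not lying in $G$, adjacent only to $p_k$, so no walk in $G$ can reach it and no splicing argument places it in $N_{G'}[u]\cup N_{G'}[v]$ unless $p_k\in\{u,v\}$. The lemma as literally stated is thus slightly off for such vertices (the paper's proof does not treat them either). For the applications this is harmless: you do show $p_k\in N_{G'}[u]\cup N_{G'}[v]$, so $p_k'$ becomes isolated once $N_{G'}[u]\cup N_{G'}[v]$ is removed and contributes nothing to the edge ideal.
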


 \begin{rem}\label{remark leaf}
 Let $G= \cnplcm$. If $(\edge {G}^{q+1}\colon e_1\cdots e_q)$ is not a square-free monomial ideal and  $G'$ is the associated graph, then there exist a vertex $x_i$ which is even-connected to itself. 
 Therefore $G'$ has a leaf. 
 By \autoref{claim cncm} one can see $N_{G'}[x_i]$ contains one of the two cycles. 
 In particular, if we denote the leaf by $e$, then $G'_{e}$ is an induced subgraph of a unicyclic graph.
 \end{rem}

\begin{theorem}\label{reg power cnp1cm}
Let $G= \cnplcm$ and  $I=I(G)$ be its edge ideal, then
	$$
	\reg {(I^{q+1} \colon e_1\cdots e_q)}\leq \reg I
	$$
	for any $1\leq q$ and  any edges $e_1,\dots , e_q \in E(G)$.
\end{theorem}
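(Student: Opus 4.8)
The plan is to pass via polarization to Banerjee's graph and then bound its regularity; recall that $l\le 2$ throughout this section. Put $M=e_1\cdots e_q$. By \autoref{G'} the monomial ideal $(I^{q+1}\colon M)$ is generated in degree $2$, and by \autoref{newGraphBanerjee} its polarization is $I(G')$, where $G'$ is obtained from $G$ by adding all even-connection edges together with, for every vertex $u$ that is even-connected to itself, a pendant whisker $\{u,u'\}$. As polarization preserves all graded Betti numbers, $\reg{(I^{q+1}\colon M)}=\reg{I(G')}$, so it suffices to prove $\reg{I(G')}\le\reg{I(G)}$. I shall use that $G$ carries the Hamiltonian path $x_2,\dots,x_n,x_1(=z_1),\dots,z_l(=y_1),y_2,\dots,y_m$; hence, whenever $(I^{q+1}\colon M)$ is square-free, $G'$ (which then has vertex set $V(G)$) has a Hamiltonian path too.

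\emph{The square-free case.} Since $G'$ is then a spanning supergraph of $G$ with a Hamiltonian path, \autoref{Hamiltonian} yields $\reg{I(G')}\le\lfloor\frac{n+m+l-1}{3}\rfloor+1$. Comparing this with the exact value of $\reg{I(G)}$ given by \autoref{reg_dumbbell} and \autoref{formula_nu_dumbbell} (equivalently, with \autoref{remark n+m+1}) already gives $\reg{I(G')}\le\reg{I(G)}$ in all cases except three residual families, namely $l=1$ with $\{n\bmod 3,\,m\bmod 3\}=\{1,2\}$ and $l=2$ with $n\equiv m\equiv 1\ (\Mod 3)$, where the Hamiltonian bound exceeds $\reg{I(G)}$ by exactly $1$. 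For these I would argue more carefully: by \autoref{claim cncm}, every even-connection edge of $G'$ lies inside $C_n$ together with the bridge, inside $C_m$ together with the bridge, or joins a neighbour of $C_n$ to a neighbour of $C_m$ across the short bridge. Using this localization one identifies $G'$ with an induced subgraph of a bicyclic or unicyclic graph whose regularity is already known from \autoref{reg_dumbbell}, \autoref{regularity Cn} or \cite[Theorem 1.1]{REG_UNICYCLIC_GRAPH} (or one deletes a suitable vertex of $C_n$ via \autoref{short exact sequence}$(ii)$ and induces), and checks that the outcome is at most $\reg{I(G)}$.

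\emph{The non-square-free case.} By \autoref{remark leaf}, $G'$ has a whisker $e=\{x_i,x_i'\}$ with $x_i'$ a leaf, and $N_{G'}[x_i]$ contains one of the two cycles of $G$; applying \autoref{short exact sequence}$(iii)$ to $e$ gives
$$\reg{I(G')}\le\max\bigl\{2,\ \reg{I(G'\setminus e)},\ \reg{I(G'_e)}+1\bigr\}.$$
Here $G'_e=G'\setminus N_{G'}[x_i]$ is an induced subgraph of a unicyclic graph (\autoref{remark leaf}), so $\reg{I(G'_e)}\le\nu(G'_e)+2$ by \cite[Theorem 1.1]{REG_UNICYCLIC_GRAPH}, and since deleting $N_{G'}[x_i]$ removes a whole cycle of $G$ a short computation with \autoref{formula_nu_dumbbell} gives $\nu(G'_e)+3\le\reg{I(G)}$; on the other hand $G'\setminus e$ is $G'$ with $x_i'$ isolated and with one whisker fewer, so $\reg{I(G'\setminus e)}\le\reg{I(G)}$ by induction on the number of whiskers, the square-free case being the base. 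Together these give $\reg{I(G')}\le\reg{I(G)}$.

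I expect the main obstacle to be the residual square-free families, where the generic estimates (the Hamiltonian-path and decycling bounds) overshoot $\reg{I(G)}$ by one and one must exploit the rigidity of a dumbbell with $l\le 2$ through \autoref{claim cncm} to control $G'$ precisely; this is also exactly where $l\le 2$ is needed, the analogue of \autoref{remark n+m+1} failing already for $C_4\cdot P_3\cdot C_4$.
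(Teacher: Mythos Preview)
Your overall architecture (pass to $G'$, split into square-free and whisker cases) matches the paper, but both halves have genuine gaps, and the paper's route is different in an essential way.

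\textbf{Square-free case.} You apply the Hamiltonian bound directly to $G'$ and correctly isolate the residual families where it overshoots by one. However, your proposed repair is not a proof. The sentence ``every even-connection edge of $G'$ lies inside $C_n$ together with the bridge, inside $C_m$ together with the bridge, or joins a neighbour of $C_n$ to a neighbour of $C_m$'' is a misreading of \autoref{claim cncm}: that lemma controls $N_{G'}[u]\cup N_{G'}[v]$ for an even-connection edge $uv$, not the location of the edge itself. An even-connection path may traverse the whole dumbbell, so its endpoints can sit anywhere; there is no reason $G'$ should be an induced subgraph of a graph whose regularity you already know. The paper avoids this problem entirely: it peels off the new edges $a_1,\dots,a_r$ one at a time via \autoref{short exact sequence}$(iii)$ and uses \autoref{claim cncm} to show that each $G'_{a_j}$ has lost either a full cycle or at least six vertices, whence the Hamiltonian bound on the \emph{smaller} graph (after re-inserting two vertices to restore a Hamiltonian path) and \autoref{remark n+m+1} give $\reg I(G'_{a_j})\le \reg I(G)-1$. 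This works uniformly and needs no case split.

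\textbf{Non-square-free case.} Your inductive peeling of whiskers is the right shape, but the key inequality $\nu(G'_e)+3\le \reg I(G)$ is asserted as ``a short computation'' that you do not carry out, and it is not short: one would need to control $\nu(G'_e)$ via \autoref{claim cncm} (to know exactly which vertices beyond the cycle are removed) and then compare with \autoref{reg_dumbbell} across all residue classes. The paper sidesteps this entirely with a cleaner trick: since $G'_{b_1}$ has lost a whole cycle, there is an edge $e\in E(G)$ with $d(e,G'_{b_1})\ge 2$, so $G'_{b_1}\cup e$ is an induced subgraph of $G'\setminus b_1$, giving $\reg I(G'_{b_1})+1=\reg I(G'_{b_1}\cup e)\le \reg I(G'\setminus b_1)$. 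This shows $\reg I(G')\le\reg I(G'\setminus b_1)$ directly, and after stripping all whiskers one lands in the square-free case.
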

\begin{proof}
We split the proof into two cases. 

\underline{Case 1.} First, suppose $(I^{q+1} \colon e_1\cdots e_q)$ is a square-free monomial ideal. In this case $(I^{q+1} \colon e_1\cdots e_q)= I(G^\prime)$ where $ G^\prime $ is a graph with $ V(G)=V(G^\prime) $ and $E(G)\subseteq E(G^\prime)$. Let $E(G^\prime)= E(G)\cup \lbrace a_1,\dots, a_r\rbrace$. By \autoref{short exact sequence}, we have
$$
\reg I(G')\leq \max \lbrace \reg I(G'\setminus a_1), \reg I(G'_{a_1})+1\rbrace
$$
From \autoref{claim cncm}, $G'_{a_1}$ is obtained from $G'$ by removing 
one of the cycles or deleting at least 6 vertices. 

Suppose $G'_{a_1}$ is obtained by removing one of the cycles.
Without loss of generality assume that $C_n$ is deleted, then there exists a Hamiltonian path of length $\leq m$ when $l=2$ and of length $\leq m-1$ when $l=1$. 
From \autoref{Hamiltonian} and \autoref{remark n+m+1}, if $C_n$ has  $n\ge 4$ vertices, then we have $\reg I(G'_{a_1})\leq \reg I(G)-1$.
In the case $n=3$, there is a Hamiltonian path of length $\leq m-3$, and so \autoref{Hamiltonian} and \autoref{remark n+m+1} again imply $\reg I(G'_{a_1})\leq \reg I(G)-1$.

Suppose $G'_{a_1}$ is obtained by removing at least 6 vertices.
Let $H^\prime$ be the graph given by deleting $N_G[a_1]$.
From the assumption of deleting at least 6 vertices we have that $\lvert H^\prime \rvert \le \lvert G \rvert-6\le n+m+l-8$.
We note that we can add two vertices to $H^\prime$ and connect them in such a way that we obtain a Hamiltonian path.
Let $H$ be a graph obtained by adding two vertices and certain edges connecting these two new vertices, such that $H$ has a Hamiltonian path.
Note that $G^\prime_{a_1}$ is an induced subgraph of $H$.
Since $\lvert H \rvert \le n+m+l-6$,  \autoref{Hamiltonian} yields
 $$
 \reg I(H) \leq \lfloor \dfrac{n+m+l-5}{3}\rfloor +1= \lfloor \dfrac{n+m+l+1}{3}\rfloor-1.
 $$ 
 Applying \autoref{remark n+m+1}, we get 
 $$
 \reg  I(G'_{a_1})\leq \reg I(H) \leq \reg I(G)-1.
 $$ 
 Therefore 
 $$\reg I(G')\leq \max \lbrace \reg I(G'\setminus a_1), \reg I(G)\rbrace.
 $$
 
 In the same way, for any subgraph $H = G^\prime \setminus \{a_1,\ldots,a_i\}$, we have that 
 $$
 \reg( I(H_{a_{i+1}}) ) \le \reg(I(G)) - 1.
 $$
 So, we also obtain
 $$
 \reg I(G'\setminus a_1)\leq \max \lbrace \reg I(G'\setminus \lbrace a_1,a_2\rbrace), \reg I(G)\rbrace.
 $$ 
 By continuing this process, we get $\reg I(G')\leq \reg I(G)$.

 \underline{Case 2.}
Suppose $(I^{q+1} \colon e_1\cdots e_q)$ is not square-free and $G'$ is the graph associated to ${(I^{q+1} \colon e_1\cdots e_q)}^\text{pol}$.  
Let $\{b_1,b_2,\ldots,b_s\}$ be the subset of edges of $E(G^\prime)\setminus E(G)$ that are generated by square monomials , i.e. each $b_i$ is a whisker. 

From \autoref{short exact sequence} we have the inequality 
$$
\reg{I(G^\prime)} \le \max\{ \reg{I(G^\prime \setminus b_1)}, 1 + \reg{I(G^\prime_{b_1})} \}.
$$
\autoref{remark leaf} implies that one of the cycles is deleted from $G^\prime_{b_1}$, then there exists  an edge $e\in G$ such that $d(e, G^\prime_{b_1})\ge 2$.
So, for such an edge $e$ we get that the disjoint union $G^\prime_{b_1} \cup e$ is an induced subgraph of $G^\prime \setminus b_1$.
Thus, \autoref{short exact sequence} and \cite[Lemma 3.2]{HOA_TAM} yield that
$$
\reg(I(G^\prime_{b_1})) + 1 = \reg(I(G^\prime_{b_1} \cup e)) \,\le\, \reg(I(G^\prime)).
$$
Therefore, we obtain that $\reg{I(G^\prime)} \le \reg{I(G^\prime \setminus b_1)}$.

By applying the same argument, it follows that 
$$
\reg{I(G^\prime)} \le \reg{I(G^\prime \setminus b_1)} \le \reg{I(G^\prime \setminus \{b_1,b_2\})} \le \cdots \le \reg{I(G^\prime \setminus \{b_1,\ldots,b_s\})}.
$$
Since the graph $G^\prime \setminus \{b_1,\ldots,b_s\}$ has no whiskers, then \underline{Step 1} implies that 
 $$
 \reg{I(G^\prime)} \le \reg{I(G^\prime \setminus \{b_1,\ldots,b_s\})} \le \reg{I(G)}.
 $$
Therefore, the proof is completed.
\end{proof}

\begin{rem}
The previous theorem is a generalization of a work done by Yan Gu in \cite{YanGu} for the case $l=1$.
\end{rem}
\begin{theorem}
	\label{powers_lower_bound}
	For the dumbbell graph $\cnplcm$ with $l\le 2$, we have
	$$
	\reg{{I(\cnplcm)}^q} \geq 2q + \reg{I(\cnplcm)} - 2,
	$$
	for any $q \ge 1$.
	\end{theorem}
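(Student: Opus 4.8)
The plan is to derive the inequality from the general lower bound of \autoref{TAI_REG_LOWER_BOUND} together with one refinement in the sole exceptional situation. By \autoref{reg_dumbbell}, for $l\le 2$ one has $\reg{I(\cnplcm)} = \nu(\cnplcm)+1$ unless, under the standing convention $n\bmod 3\le m\bmod 3$, we are in one of the cases ``$l=1$ and $n\equiv m\equiv 2\;(\Mod 3)$'' or ``$l=2$, $n\equiv 0,1\;(\Mod 3)$ and $m\equiv 2\;(\Mod 3)$'', in which $\reg{I(\cnplcm)} = \nu(\cnplcm)+2$. When $\reg{I(\cnplcm)} = \nu(\cnplcm)+1$, \autoref{TAI_REG_LOWER_BOUND} gives directly
$$
\reg{I(\cnplcm)^q}\;\ge\;2q+\nu(\cnplcm)-1\;=\;2q+\reg{I(\cnplcm)}-2 ,
$$
so only the two exceptional cases remain, and there we must establish the sharper bound $\reg{I(\cnplcm)^q}\ge 2q+\nu(\cnplcm)$.

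In both exceptional cases \autoref{formula_nu_dumbbell} yields $\nu(\cnplcm)=\nu(C_n)+\nu(C_m)$, and $C_m$ is a cycle with $m\equiv 2\;(\Mod 3)$ (and so is $C_n$ when $l=1$). First I would isolate a suitable induced subgraph. By \autoref{rem_struct_match_circle} --- applying case $r=2$ when $n\equiv 2$ and case $r=0$ or $r=1$ when $n\equiv 0,1$ --- there is a maximum induced matching $\matching$ of $C_n$, of size $\nu(C_n)=\lfloor n/3\rfloor$, none of whose edges is incident to the vertex $x_1$ where $C_n$ meets the bridge (nor, when $n\equiv 2$, to the two $C_n$-neighbours of $x_1$). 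Because the only vertex of $C_n$ lying on any edge joining $C_n$ to the rest of $\cnplcm$ is $x_1$, no vertex covered by $\matching$ is adjacent in $\cnplcm$ to a vertex of $C_m$; hence the subgraph $H$ of $\cnplcm$ induced on $V(C_m)\cup\bigcup_{e\in\matching}e$ is the disjoint union of $C_m$ with the $\nu(C_n)$ independent edges of $\matching$. Thus $H$ is an induced subgraph of $\cnplcm$, its edge ideal is $I(H)=I(C_m)+I(\matching)$ with $I(C_m)$ and $I(\matching)$ in disjoint sets of variables, and $\nu(H)=\nu(C_m)+\nu(C_n)=\nu(\cnplcm)$.

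To conclude I would invoke two facts about the regularity of powers: (i) if $H$ is an induced subgraph of a graph $G$, then $\reg{I(H)^q}\le\reg{I(G)^q}$ for every $q$; and (ii) for homogeneous ideals $I_1,I_2$ in disjoint sets of variables and $a+b=q+1$ with $a,b\ge 1$, one has $\reg{(I_1+I_2)^q}\ge\reg{I_1^{a}}+\reg{I_2^{b}}-1$. Applying (ii) with $a=1$ and $b=q$, and using $\reg{I(C_m)}=\nu(C_m)+2$ from \autoref{regularity Cn} (valid since $m\equiv 2\;(\Mod 3)$) together with the forest formula $\reg{I(\matching)^q}=2q+\nu(C_n)-1$ from \autoref{TAI_REG_OF_FOREST_CYCLE}, one obtains
$$
\reg{I(H)^q}\;\ge\;\bigl(\nu(C_m)+2\bigr)+\bigl(2q+\nu(C_n)-1\bigr)-1\;=\;2q+\nu(C_n)+\nu(C_m)\;=\;2q+\nu(\cnplcm) ,
$$
and then (i) gives $\reg{I(\cnplcm)^q}\ge\reg{I(H)^q}\ge 2q+\nu(\cnplcm)=2q+\reg{I(\cnplcm)}-2$, which finishes the exceptional cases.

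The main obstacle is the last paragraph, namely securing the two properties (i) and (ii) of the regularity of powers and then carrying out the bookkeeping that makes $a=1$ the governing term in (ii): it is precisely the ``extra $+1$'' in $\reg{I(C_m)}$ at the first power --- which, by \autoref{regularity Cn}, is lost for $q\ge 2$ --- that upgrades the naive estimate $2q+\nu-1$ to $2q+\nu$. One also has to check that $\matching$ really can be taken disjoint from $C_m$ inside $\cnplcm$; this is where $l\le 2$ is used, and when $l\ge 3$ the graph $\cnplcm$ contains two full disjoint cycles, whose first-power bonuses then compete in higher powers, so that the equality can fail --- in agreement with \autoref{Equality_counter_example}.
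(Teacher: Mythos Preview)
Your argument is correct and follows the same overarching strategy as the paper---reduce the exceptional cases to a well-chosen induced subgraph $H$ and then bound $\reg{I(\cnplcm)^q}$ from below by $\reg{I(H)^q}$ via \cite[Corollary 4.3]{TAI_FOREST_CYCLE}---but the implementation differs in two places.

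First, the choice of $H$ is different. You take $H=C_m\cup\matching$, the disjoint union of the ``bad'' cycle with an induced matching of $C_n$ avoiding the junction vertex. The paper instead removes one vertex of $C_n$: for $l=1$ it takes $H=P_{n-1}\cdot C_m$ (a connected unicyclic graph), and for $l=2$ it takes $H=P_{n-1}\cup C_m$. In both approaches $H$ is a graph with a single cycle $C_m$ and satisfies $\reg{I(H)}=\reg{I(\cnplcm)}$.

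Second, and more substantively, you compute $\reg{I(H)^q}$ via your property~(ii), a lower bound of the form $\reg{(I_1+I_2)^q}\ge\reg{I_1^a}+\reg{I_2^b}-1$ for $a+b=q+1$ and $I_1,I_2$ in disjoint variables. This is true, but it is a nontrivial result (essentially the Nguyen--Vu formula for powers of sums) and is not among the paper's references; you should cite it. The paper bypasses this entirely: since its $H$ has exactly one cycle, it invokes \cite[Theorem~1.2]{REG_UNICYCLIC_GRAPH}, which gives $\reg{I(H)^q}=2q+\reg{I(H)}-2$ directly. Your $H$ is \emph{also} a graph with a single cycle, so you could have quoted the same result and avoided~(ii) altogether; this would make your proof both shorter and self-contained within the paper's toolkit.

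A small correction to your closing remark: the disjointness of $\matching$ from $C_m$ is not ``where $l\le 2$ is used''---your construction of $H$ works for any $l$. What actually fails for $l\ge3$ (say in the case $n,m\equiv 2$) is that $\nu(\cnplcm)$ then exceeds $\nu(C_n)+\nu(C_m)$ because the bridge contributes an extra matching edge, so your $H$ no longer satisfies $\reg{I(H)}=\reg{I(\cnplcm)}$.
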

	\begin{proof}
				Using the inequality $\reg{{I(\cncm)}^q} \ge 2q + \nu(\cncm) - 1$ of \cite[Theorem 4.5]{TAI_FOREST_CYCLE}, for the cases where $\reg{I(\cnplcm)} = \nu(\cnplcm) + 1$ we get the expected inequality.
		We divide the proof in two halves, the cases $l=1$ and $l=2$. 
		
		\underline{Case 1.} Let $l=1$. We only need to focus on the case where $n, m \equiv 2 \; \;(\Mod 3)$.
		Let $H$ be the induced subgraph of $\cnlcm{1}$ mentioned in the proof of \autoref{reg of cn p1 cm}, i.e. $H=(C_n \cdot P_1 \cdot C_m) \setminus \{x_n\}=P_{n-1}\cdot C_m$.
		Using \autoref{formula_nu_dumbbell}, \autoref{nu_path_cycle} and the modularity $n, m \equiv 2 \; (\Mod 3)$,
		we can check that 
		$$
		\nu(H)=\nu(\cnlcm{1})
		$$
		and that 
		$$
		\nu(H) = \nu(H \setminus \Gamma_{H}(C_m)).
		$$
		From \autoref{reg of cn p1 cm} and \cite[Theorem 1.1]{REG_UNICYCLIC_GRAPH} we get 
		$$
		\reg{I(\cnlcm{1})}=\nu(\cnlcm{1})+2=\nu(H)+2=\reg{I(H)}.
		$$
		Since $H$ is an induced subgraph of $\cnlcm{1}$, then from \cite[Theorem 1.2]{REG_UNICYCLIC_GRAPH} and \cite[Corollay 4.3]{TAI_FOREST_CYCLE} we get the inequality
		$$
		\reg{I(\cnlcm{1})^q} \ge \reg{I(H)^q}=2q+\reg{I(H)}-2=2q+\reg{I(\cnlcm{1})}-2.
		$$
		
		\underline{Case 2.} Let $l=2$. We only need to focus on the cases where $n \equiv 0,1 \; (\Mod 3)$ and $m \equiv 2 \;(\Mod 3)$. 
		We take the same induced subgraph $H$ as in \autoref{reg_case_zero_two}.
		The induced subgraph $H = (\cncm) \setminus \{x_1\}$ of $\cncm$ is given as the union of a path of length $n-1$ and the cycle $C_m$, i.e., $H = P_{n-1} \cup C_m$. 
		
		By \autoref{formula_reg_dumbbell}, for the cases $n \equiv 0,1 \; (\Mod 3)$ and $m \equiv 2 \;(\Mod 3)$, we have $$\reg{I(\cncm)}=\nu(\cncm)+2=\lfloor \frac{n}{3} \rfloor + \lfloor \frac{m}{3} \rfloor + 2,$$ and from \cite[Theorem 1.1]{REG_UNICYCLIC_GRAPH}
		we have $$\reg{I(H)}=\nu(H)+2=\nu(P_{n-1})+\nu(C_m)+2=\lfloor \frac{n}{3} \rfloor + \lfloor \frac{m}{3} \rfloor + 2.$$
		Hence, we get $\reg{I(\cncm)}=\reg{I(H)}$.
		Finally, using \cite[Theorem 1.2]{REG_UNICYCLIC_GRAPH} and \cite[Corollary 4.3]{TAI_FOREST_CYCLE}, we get the inequality
		$$
		\reg{I(\cncm)^q} \ge \reg{I(H)^q}=2q+\reg{I(H)}-2=2q+\reg{I(\cncm)}-2.
		$$
		Therefore, the proof is completed.
	\end{proof}

\begin{theorem}
	\label{Equality}
	For the dumbbell graph $\cnplcm$ with $l\leq 2$,  we have
	$$
	\reg {I(\cnplcm)^q}=  2q + \reg{I(\cnplcm)} -2
	$$
	for all $q\geq 1$.
\end{theorem}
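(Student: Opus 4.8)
The plan is to run a short induction on $q$, combining the matching lower bound from \autoref{powers_lower_bound} with an upper bound obtained by feeding the colon-ideal estimate of \autoref{reg power cnp1cm} into Banerjee's recursion \autoref{banerjee}. Throughout, set $G=\cnplcm$ with $l\le 2$ and $I=I(G)$.

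First, the base case $q=1$ is the definition of $\reg I$, so there is nothing to prove. For the inductive step, suppose $\reg I^q = 2q+\reg I -2$. I would let $\{m_1,\dots,m_r\}$ denote the minimal monomial generators of $I^q$, noting that each $m_l$ is a $q$-fold product $e_1\cdots e_q$ of edges of $G$, and apply \autoref{banerjee} in the form relating $\reg I^{q+1}$ to the colon ideals $(I^{q+1}\colon m_l)$, namely
\[
\reg I^{q+1}\le \max\Bigl(\{\reg(I^{q+1}\colon m_l)+2q\}_{l=1}^{r}\cup\{\reg I^q\}\Bigr).
\]
Then \autoref{reg power cnp1cm} bounds each $\reg(I^{q+1}\colon m_l)$ by $\reg I$, while the inductive hypothesis gives $\reg I^q = 2q+\reg I-2\le 2q+\reg I$; hence $\reg I^{q+1}\le 2q+\reg I = 2(q+1)+\reg I-2$. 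Finally, \autoref{powers_lower_bound} supplies the reverse inequality $\reg I^{q+1}\ge 2(q+1)+\reg I-2$, so the two bounds coincide and the induction closes.

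The point that makes the argument work — and the place where all the real effort already sits — is the uniformity of \autoref{reg power cnp1cm}: it controls $\reg(I^{q+1}\colon e_1\cdots e_q)$ by $\reg I$ for \emph{every} exponent $q$ and \emph{every} choice of edges, so Banerjee's bound does not degrade as $q$ increases. Consequently I do not expect a genuine obstacle in this final assembly step; the hard work lies in \autoref{powers_lower_bound} (the matching lower bound, built from nice induced subgraphs together with the known formulas for powers of edge ideals of unicyclic graphs and of cycles) and in \autoref{reg power cnp1cm} (the even-connectedness analysis via \autoref{claim cncm}, combined with the Hamiltonian-path bound \autoref{Hamiltonian} and \autoref{remark n+m+1}), both of which rely essentially on the hypothesis $l\le 2$. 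The only things to be careful about are invoking \autoref{banerjee} with the generators of $I^q$ rather than of $I^{q+1}$, and observing that the displayed chain yields an equality precisely because the independently obtained upper and lower bounds are literally the same linear expression.
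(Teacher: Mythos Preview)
Your proposal is correct and follows exactly the same approach as the paper: the paper's proof is the single line ``It follows by \autoref{reg power cnp1cm}, \autoref{banerjee} and \autoref{powers_lower_bound},'' which is precisely the induction you spelled out. You have simply made explicit the standard mechanism by which Banerjee's recursion, fed the uniform colon-ideal bound, yields the linear upper bound matching the lower bound.
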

\begin{proof}
	It follows by  \autoref{reg power cnp1cm}, \autoref{banerjee} and \autoref{powers_lower_bound}.
\end{proof}

\begin{rem}\label{Equality_counter_example}
	One may ask whether 
	$$\reg{I(\cnlcm{ l})^q}=2q + \reg{I(\cnlcm{l})} -2$$
	always holds for given $n,m,l$ and $q$. Unfortunately, this is not the case. In fact, it can be checked that 
	$$
	6=\reg{I(C_5\cdot P_3 \cdot C_5)^2}<4+ \reg{I(C_5 \cdot P_3 \cdot C_5)} -2=7.
	$$
\end{rem}

\section*{Acknowledgments}
This project is originated from the summer school ``Pragmatic 2017". The authors would like to sincerely express their gratitude to the organizers Alfio Ragusa, Elena Guardo, Francesco Russo, and Giuseppe Zappal\`{a}, and to the lecturers Brian Harbourne, Adam Van Tuyl, Enrico Carlini, and T\`{a}i Huy H\`{a}. 
We are deeply grateful to the last lecturer for introducing this topic to us and for his mentoring. 
We are grateful to the referee for valuable comments and suggestions
that improved this paper in many ways.
We thank Yan Gu for pointing out an error in the initial version. 
The computer algebra system Macaulay2 \cite{M2}, was very helpful to compute several examples in the preparation of this paper.

\bibliographystyle{elsarticle-num} 
\bibliography{references}

\end{document}